\theoremstyle{definition}
\newtheorem{lemma}{Lemma}[section]
\newtheorem{corollary}{Corollary}[section]
\newtheorem{claim}{Claim}[section]
\numberwithin{equation}{section}
\newtheorem{remark}{Remark}[section]
\begin{document}

\title[Unique continuation for Hyperbolic Schr\"odinger Equations]{Unique continuation and Hardy's Uncertainty Principle for Hyperbolic Schr\"odinger Equations}

\author[T. S. Jensen]{Torunn S. Jensen}

\address{Torunn S. Jensen, Department of Mathematics, University of Bergen, PO Box 7803, 5020 Bergen, Norway}
\email{Torunn.Jensen@uib.no}

\date{\today}

\subjclass[2020]{35B60, 35J10, 35Q41, 35Q55}
\keywords{Hyperbolic Schrödinger equation, unique continuation, Carleman estimates, Hardy uncertainty principle, cubic hyperbolic NLS}
\maketitle
\section*{Abstract}
We prove unique continuation properties related to the Hardy uncertainty principle for solutions of the hyperbolic nonlinear Schrödinger equation and the hyperbolic Schrödinger equation with potential. Under suitable conditions on the nonlinearity, or the potential, we show that if $u$ is a solution with Gaussian decay at two different times, then 
$u\equiv 0$. These results extend to the hyperbolic setting the work of Escauriaza, Kenig, Ponce, and Vega (JEMS, 10, 2008) for the classical Schrödinger equation. The proofs rely on Carleman estimates based on calculus and convexity arguments, with the main challenge being to provide a rigorous justification of these estimates. Although our approach follows the general strategy of Escauriaza, Kenig, Ponce, and Vega, several technical modifications are required to handle the hyperbolic character of the equation. 

\section{Introduction}
\subsection{The hyperbolic Schr\"odinger equation} We consider the hyperbolic nonlinear Schrödinger equation (HNLS) 
\begin{equation}\label{HNLS}
    \partial_t u = i(Lu +F(u,\bar{u})),
\end{equation}
where $u=u(x,t) \in \mathbb C$, $x \in \mathbb R^n$, $t \in \mathbb R$, $F(u,\bar{u})$ is a complex-valued function, and, for $1 \le k < n$,
\begin{equation}\label{hyperbolic laplace}
    L=\sum_{j=1}^k \partial_{x_j}^2-\sum_{j=k+1}^n \partial_{x_j}^2,
\end{equation}
is the \lq\lq hyperbolic\rq\rq \, Laplace operator. These equations appear as asymptotic models in different physical contexts, such as water waves, plasma waves, electromagnetic waves and nonlinear optics. We refer to \cite{lannes_water_2013,saut_hyperbolic_2024,berge_wave_1998,kirane_scalar_2024, dumas_variants_2016}, and the references therein for general derivations of nonlinear hyperbolic Schrödinger-type equations. Of course, if $k=n$ in \eqref{hyperbolic laplace}, then $L=\Delta$, and we recover the classical nonlinear Schrödinger equation,
\begin{equation}\label{NLS}
    \partial_t u = i(\Delta u +F(u,\bar{u})).
\end{equation}

In the case where $F(u,\bar{u})=\pm|u|^2u$, we obtain the cubic hyperbolic NLS,
\begin{equation}\label{cubic HNLS}
\partial_t u = i(Lu + |u|^2u) .
\end{equation} 
For $n=2,$ the cubic hyperbolic NLS
$$\partial_t u = i(\partial_x^2-\partial_y^2 + |u|^2u)$$ describes the envelope of slowly modulated wave packets around highly oscillating waves in deep water. In nonlinear optics, it also models the evolution of electromagnetic fields in planar waveguides with self-focusing and normal dispersion, see \cite{gorza_ultrafast_2008,tan_wu_nonlinear_1993} and the references therein. The cubic hyperbolic NLS  also appears in more complicated settings, like the Davey-Stewartson system (see for example \cite{ghidaglia_initial_1990} for more details), where it is coupled with an elliptic/hyperbolic potential equation: 
\begin{align} \label{DS}
    \begin{cases}
        i\partial_t u + \delta \partial_x^2 u +\partial_y^2 u=\chi |u|^2u + b u \partial_x \phi, \\
        \partial^2_x \phi +m\partial^2_y \phi = \partial_x(|u|^2).
    \end{cases}
\end{align}
Here, the parameters $\delta, \chi, b, m$ are real, can be both positive and negative, and $|\delta|=|\chi|=1.$ \\

It was noticed in \cite{ghidaglia_nonelliptic_1993} that the Strichartz estimates for the linear part of \eqref{HNLS} are similar to the ones for the classical (elliptic) Schrödinger equation, so that the local well-posedness results for NLS (see for example \cite{linares_introduction_2015} and the references therein) also hold for \eqref{HNLS}. However, it is still a challenging open problem to understand the global behavior of the solutions to \eqref{HNLS}, see \cite{saut_hyperbolic_2024} for more details.\\ 

In this paper, we will derive unique continuation results for the hyperbolic nonlinear Schrödinger equation \eqref{HNLS}. In the process, we will first investigate the case of the hyperbolic Schrödinger equation with a potential 
\begin{equation} \label{HSEP}
        \partial_t u = i(Lu + V(x,t)u),
\end{equation}       
where the potential $V$ satisfies suitable assumptions (see Theorem \ref{hyperbolic result} below).

\subsection{Uncertainty principle for the Schrödinger equation }
In the series of works \cite{escauriaza_uniqueness_2006, escauriaza_convexity_2008, escauriaza_sharp_2010, escauriaza_morgan_2011, escauriaza_uniqueness_2011}, Escauriaza, Kenig, Ponce and Vega  proved several unique continuation properties for the Schrödinger equation with potential and the nonlinear Schrödinger equation.  These results generalize Hardy's Uncertainty principle from Fourier analysis: \textit{if $f(x) = O(e^{-|x|^2/\beta^2}), \  \hat{f}(\xi)=O(e^{-4|\xi|^2/\alpha^2})$ and $\alpha\beta < 4,$ then $f\equiv0$. Also, if $\alpha\beta=4$ then $f$ is a constant multiple of $e^{-|x|^2/\beta^2}$.} There is a corresponding $L^2$-result, proved in \cite{mauceri_generalisations_1983}: \textit{if $\|e^{|x|^2/\beta^2} f\|_{L^2(\mathbb{R})}$ and $\|e^{4|\xi|^2/\alpha^2} \hat{f} \|_{L^2(\mathbb{R})}$ are both finite and $\alpha \beta < 4$, then $f\equiv 0.$}
The extension of this result to $n$ dimensions has also been deduced using the Radon transform, see \cite{sitaram_uncertainty_1995}.

The solutions of the free Schrödinger equation write
$$u(x,t)=e^{it\Delta}u_0=\frac{e^{i|x|^2/4t}}{(2it)^{n/2}}\left(e^{i|\cdot|^2/4t}u_0\right)^{\wedge} \ (x/2t).$$
Thus $u(x,t)$ at any time $t$ is related to the Fourier transform of the initial data $u_0$. Then, we can apply the Hardy uncertainty principle to the function $f(x)=e^{i|x|^2/4}u_0$ and deduce the following unique continuation result:  \textit{If $u$ is a solution of the free Schrödinger equation, $\|e^{|x|^2/\beta^2}u_0\|_{L^2(\mathbb{R}^n)}$ and $\|e^{|x|^2/\alpha^2}u(1)\|_{L^2(\mathbb{R}^n)}$ are finite, and $\alpha\beta<4, $ then $u\equiv0$.}

For the hyperbolic case, the solution of the free equation writes

$$u(x,t) = e^{itL}u_0 = C(n,k,t)e^{i(|x_+|^2-|x_-|^2)/4t}*u_0,$$
for $|x_+|^2 = x_1^2+\dots+ x_k^2,$ and $|x_-|^2 = x_{k+1}^2+\dots + x_n^2,$ and for some constant $C$ depending on $n,k$ and $t.$
Thus, similar to the elliptic case,
$$u(x,t) = C(n,k,t)e^{i(|x_+|^2-|x_-|)/4t^2}\Big(e^{i/4t(|\cdot_+|^2-|\cdot_{-}|^2)}u_0\Big)^{\wedge}(\tilde{x}/2t),$$
where $\tilde{x}=(x_1,\dots,x_k,-x_{k+1},\dots,-x_n)=(x_+,-x_-).$ Hence, applying the Hardy uncertainty principle to the function $g(x)=e^{i (|x_+|^2-|x_-|^2)/4}u_0$ yields the Hardy uncertainty principle also for the solution of the free hyperbolic Schrödinger equation:
\textit{If $u$ is a solution of the free hyperbolic Schrödinger equation, $\|e^{|x|^2/\beta^2}u_0\|_{L^2(\mathbb{R}^n)}$ and $\|e^{|x|^2/\alpha^2}u(1)\|_{L^2(\mathbb{R}^n)}$ are finite, and $\alpha\beta<4, $ then $u\equiv0$.}\\

In \cite{escauriaza_hardys_2008}, the authors extended this result to the Schrödinger equation with potential and to the nonlinear Schrödinger equation. The proofs are quite different though and rely on Carleman's estimates. In particular, they proved the following: \\
\begin{restatable}[\cite{escauriaza_hardys_2008}]{theoremn}{EKPV}
\label{EKPV th1}
Let $u\in C([0,1],L^2(\mathbb{R}^n))$ be a solution to 
\begin{equation*}
    \partial_t u = i(\Delta u + V(x,t)u)  \ \ \ in \ \mathbb{R}^n \times [0,1],
\end{equation*}
where $V$ is bounded, and either $V(x,t)= V_1(x)+V_2(x,t)$ with $V_1$ real valued and
$$\sup_{t\in [0,1]}\|e^{\frac{|x|^2}{(\alpha t + \beta(1-t))^2}}V_2(t)\|_{L^\infty(\mathbb{R}^n)} < \infty, $$
or $$\lim_{R\to \infty}\int_{0}^1 \|V(t)\|_{L^\infty(\mathbb{R}^n\setminus B_{R})}dt=0.$$
If there exist constants $\alpha,\beta >0$ such that $\alpha \beta < 2$ and $\|e^{\frac{|x|^2}{\beta^2}}u(0)\|_{L^2(\mathbb{R}^n)} $ and   $\|e^{\frac{|x|^2}{\alpha^2}}u(1)\|_{L^2(\mathbb{R}^n)} $ are finite, then $u\equiv 0$. 
\end{restatable}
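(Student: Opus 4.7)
The plan is to follow the Escauriaza--Kenig--Ponce--Vega strategy from \cite{escauriaza_hardys_2008}, which combines an upper bound coming from a logarithmic convexity argument with a lower bound coming from a weighted Carleman estimate, and then derives a contradiction unless $u \equiv 0$ when $\alpha\beta < 2$.

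\textbf{Step 1 (logarithmic convexity / Gaussian upper bound).} I would introduce the time-dependent Gaussian weight $\phi(x,t) = |x|^2/(\alpha t + \beta(1-t))^2$, which interpolates between the two end-time weights. Setting $f = e^\phi u$, one checks that $f$ satisfies an equation of the form $\partial_t f = \mathcal{S}f + \mathcal{A}f + \widetilde{V}f$, with $\mathcal{S}$ symmetric and $\mathcal{A}$ antisymmetric in $L^2(\mathbb{R}^n)$. Computing the first two time derivatives of $H(t) = \|f(t)\|_{L^2}^2$ and exploiting positivity of the commutator $[\mathcal{S},\mathcal{A}]$ for this particular choice of $\phi$, one obtains a differential inequality yielding the logarithmic convexity
$$H(t) \leq e^{CM} H(0)^{1-t} H(1)^t \qquad (t \in [0,1]),$$
with $M$ controlling the norm of $V$. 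This translates into the uniform bound $\|e^{\phi(t)} u(t)\|_{L^2(\mathbb{R}^n)} \leq C$ on the whole time interval.

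\textbf{Step 2 (Carleman lower bound and conclusion).} Next, I would establish a Carleman estimate of the form
$$\frac{M^3}{R^3}\|e^{\gamma_R(x,t)} g\|_{L^2(\mathbb{R}^n \times [0,1])}^2 \leq \|e^{\gamma_R(x,t)} (\partial_t - i\Delta)g\|_{L^2(\mathbb{R}^n \times [0,1])}^2$$
for $g \in C_0^\infty(\mathbb{R}^n \times (0,1))$ and $M$ large, where $\gamma_R(x,t) = \mu(t)|x + R\xi|^2/R^2$ is a radial weight shifted by a large vector $R\xi$ and $\mu(t)$ a concave profile compatible with $\phi$. Applying the estimate to a localization $g = \theta_R(x)\eta(t)u$, the commutator errors $[\Delta, \theta_R]u$ are supported on $|x| \sim R$, while the potential term $iVg$ is absorbed using the two alternative assumptions on $V$ (the split structure $V = V_1 + V_2$ lets the real-valued $V_1$ be absorbed into the antisymmetric part, and the decay of $V_2$ or of $V$ itself makes the remainder negligible at the relevant scales). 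Combining the resulting lower bound on the localized mass of $u$ with the upper bound of Step 1 and optimizing in $M$ and $R$, the two estimates are seen to be incompatible precisely when $\alpha\beta < 2$ unless $u$ vanishes on a large set; a standard unique continuation result then propagates this to $u \equiv 0$ throughout $\mathbb{R}^n \times [0,1]$.

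\textbf{Main obstacle.} The principal technical difficulty is the rigorous justification of the formal convexity computation in Step 1: the assumed regularity $u \in C([0,1], L^2)$ does not a priori permit the two time differentiations of $H(t)$, nor the integrations by parts against the growing Gaussian weight. A careful approximation scheme --- truncating the weight, regularizing the equation by a small parabolic term $\varepsilon \Delta$, and passing to the limit with uniform estimates --- will constitute the bulk of the technical work. Getting the sharp constant $\alpha\beta < 2$ additionally demands a careful choice of lower-order corrections to $\phi$ and of the profile $\mu(t)$ so that the commutator structure of the symmetric/antisymmetric decomposition aligns properly in both steps.
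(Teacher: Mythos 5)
Note first that Theorem~\ref{EKPV th1} is stated in the paper only as a citation to Escauriaza--Kenig--Ponce--Vega; the paper's own work is the hyperbolic analogue, Theorem~\ref{hyperbolic result}, whose proof mirrors the EKPV strategy. So I compare your proposal against that proof and the EKPV blueprint it follows.

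Your overall two-step decomposition --- an a priori Gaussian bound for all $t \in [0,1]$ followed by a space--time Carleman estimate applied to a localized solution --- is the correct strategy. You also correctly identify the real technical burden: the formal log-convexity computation cannot be performed directly on a $C([0,1],L^2)$ solution, and one must regularize parabolically, truncate the weight, and pass to the limit. This is precisely what occupies Section~3 of the paper (Lemmas~\ref{lemma 1 b)}, \ref{Lemma3}, \ref{Lemma 4 paper}, \ref{general KPV lemma}, Corollary~\ref{gaussian decay 2}). However, your proposal underestimates how much structure this requires: the two alternative hypotheses on $V$ demand genuinely different arguments. For the first hypothesis one can bound $M_2 = \sup_t \|e^{\gamma|x|^2}F(t)\|/\|u(t)\|$ directly and run the Carleman log-convexity; for the second there is no such bound, and one must instead prove exponential decay of $u$ in a.e.\ direction $\lambda$ (the Kenig--Ponce--Vega argument, Lemma~\ref{general KPV lemma}) and integrate against a Gaussian in $\lambda$ to recover the weighted bound (Corollary~\ref{gaussian decay 2}). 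Folding both cases into ``the decay of $V_2$ or of $V$ itself makes the remainder negligible'' hides a substantial part of the proof.

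The more serious gap is in your Step~2. You propose the weight $\gamma_R(x,t) = \mu(t)|x + R\xi|^2/R^2$. Near the origin in $x$ this weight is $\mu(t)|R\xi|^2/R^2 = O(1)$, independent of $R$, so the Carleman inequality can never produce the exponential factor $e^{cR^2}$ on the localized mass of $u$ that the final limit $R \to \infty$ requires. The EKPV/paper weight is
$$\phi(x,t) = \mu\,|x + R\,t(1-t)\,\tilde\xi|^2 - \tfrac{(1+\epsilon)}{16\mu}\,R^2\,t(1-t),$$
which has two features your candidate lacks: the displacement $R\,t(1-t)\,\tilde\xi$ vanishes at $t = 0,1$ (so the weight matches the Gaussian used in Step~1 where the data is controlled) and is of size $\sim R/4$ at $t = 1/2$ (so $\phi \gtrsim R^2$ on a ball of radius $\sim \epsilon R$ around the origin at intermediate times, by inequality~\eqref{estimate on phi in the ball}); and the subtracted $O(R^2)$ term is exactly what makes the commutator $\partial_t\mathcal{S} + [\mathcal{S},\mathcal{A}]$ bounded below by $\epsilon R^2/(8\mu)$ and no worse. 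Without both ingredients the lower bound $\phi \geq N(\epsilon,\gamma,\mu)R^2 > 0$ inside the ball fails and the argument collapses. Finally, the conclusion is not a ``contradiction after optimizing in $M$ and $R$'' and does not invoke any auxiliary unique continuation theorem: one lets $M \to \infty$ (killing the cutoff commutator error, which is possible because of the gradient estimate~\eqref{IG2} established in Step~1), then lets $R \to \infty$ and reads off $\|\tilde u(0)\|_{L^2} = 0$ directly.
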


\begin{remark} This result is an extension of the Hardy Uncertainty Principle for the free Schrödinger equation to the Schrödinger equation with potential. The condition on the coefficients, $\alpha \beta <2$ was not sharp in \cite{escauriaza_hardys_2008}, and a bit weaker than the one from Hardy's uncertainty principle. However, in \cite{escauriaza_sharp_2010} the result was improved to be as sharp as in the free case, with the condition $\alpha\beta<4.$ 
\end{remark}

As a consequence, they also proved the following result for the NLS: 

\begin{restatable}[\cite{escauriaza_hardys_2008}]{theoremn}{EKPVNonlinear}
\label{Theorem 2 paper}
Let $u_1$ and $u_2$ be $C([0,1],H^k(\mathbb{R}^n))$ solutions to \begin{equation*}
    \partial_t u = i(\Delta u + F(u,\bar{u}))
\end{equation*} in $\mathbb{R}^n\times [0,1]$, with $k\in\mathbb{Z}^+, \ k>n/2, \ F:\mathbb{C}^2\to \mathbb{C}, \ F\in C^k$ and $F(0)=\partial_uF(0)=\partial_{\bar{u}} F(0)=0.$ If there are positive constants $\alpha$ and $\beta$ with $\alpha\beta<2$ such that $\|e^{\frac{|x|^2}{\beta^2}}(u_1(0)-u_2(0))\|_{L^2(\mathbb{R}^n)}$ and $\|e^{\frac{|x|^2}{\alpha^2}}(u_1(1)-u_2(1))\|_{L^2(\mathbb{R}^n)}$  are finite. Then $u_1\equiv u_2.$
\end{restatable}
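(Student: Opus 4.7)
The plan is to reduce Theorem \ref{Theorem 2 paper} to the linear result of Theorem \ref{EKPV th1} by studying the difference $w := u_1 - u_2$. Subtracting the two equations and applying the fundamental theorem of calculus to the path $s \mapsto F(u_2 + sw,\, \overline{u_2 + sw})$ on $[0,1]$, I obtain
\begin{equation*}
    \partial_t w = i\bigl(\Delta w + V_1(x,t)\, w + V_2(x,t)\, \bar w\bigr),
\end{equation*}
where
\begin{equation*}
    V_1 := \int_0^1 \partial_u F\bigl(u_2+sw,\, \overline{u_2+sw}\bigr)\, ds, \qquad V_2 := \int_0^1 \partial_{\bar u} F\bigl(u_2+sw,\, \overline{u_2+sw}\bigr)\, ds.
\end{equation*}
The hypotheses $F(0) = \partial_u F(0) = \partial_{\bar u} F(0) = 0$ together with $F \in C^k$ give $|\partial_u F(z,\bar z)| + |\partial_{\bar u} F(z,\bar z)| = O(|z|)$ near the origin, so that $|V_j(x,t)| \lesssim |u_1(x,t)| + |u_2(x,t)|$ for $j=1,2$.

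Next, I would verify the decay condition on the potential required by the second alternative in Theorem \ref{EKPV th1}. Because $u_1, u_2 \in C([0,1], H^k(\mathbb{R}^n))$ and $k > n/2$, the Sobolev embedding $H^k \hookrightarrow C_0(\mathbb{R}^n)$ ensures that each $u_i(\cdot,t)$ is continuous and vanishes at spatial infinity; the compactness of $\{u_i(t)\}_{t\in[0,1]}$ in $H^k$ then promotes this decay to a uniform statement in $t$, namely
\begin{equation*}
    \lim_{R\to\infty}\sup_{t\in[0,1]} \|u_i(t)\|_{L^\infty(\mathbb{R}^n\setminus B_R)} = 0.
\end{equation*}
Combined with the bound on $V_j$, this yields
\begin{equation*}
    \lim_{R\to\infty} \int_0^1 \bigl(\|V_1(t)\|_{L^\infty(\mathbb{R}^n\setminus B_R)} + \|V_2(t)\|_{L^\infty(\mathbb{R}^n\setminus B_R)}\bigr)\, dt = 0,
\end{equation*}
which is exactly the second hypothesis of Theorem \ref{EKPV th1}. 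The Gaussian bounds on $w(0) = u_1(0)-u_2(0)$ and $w(1) = u_1(1)-u_2(1)$ with $\alpha\beta < 2$ are given by assumption, and the base regularity $w \in C([0,1], L^2(\mathbb{R}^n))$ is inherited from $u_1, u_2$.

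The one point that needs care is that Theorem \ref{EKPV th1} is stated for a $\mathbb{C}$-linear perturbation $Vu$, whereas the equation for $w$ contains the anti-linear term $V_2 \bar w$. However, the Carleman estimates that drive the proof of Theorem \ref{EKPV th1} only exploit a pointwise majorization of the form $|(\partial_t - i\Delta) w| \leq M(x,t)|w|$, and since $|V_1 w + V_2 \bar w| \leq (|V_1| + |V_2|)|w|$, the effective ``potential'' $M := |V_1| + |V_2|$ falls within the same class as above. Once this absorption is justified, Theorem \ref{EKPV th1} applied to $w$ gives $w \equiv 0$, i.e., $u_1 \equiv u_2$. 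I expect this anti-linear absorption to be the only delicate step; everything else follows directly from the nonlinearity hypotheses, the Sobolev embedding, and the linear theorem.
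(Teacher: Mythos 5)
Your decomposition via the fundamental theorem of calculus yields two continuous potentials $V_1, V_2$ but produces the $\mathbb{R}$-linear term $V_2\bar w$, which — as you correctly flag — does not fit the hypothesis $Vu$ of Theorem \ref{EKPV th1}. The argument in \cite{escauriaza_hardys_2008} (and this paper's proof of the hyperbolic analogue, Theorem \ref{HNLS result}) instead sets $V := \bigl(F(u_1,\bar u_1) - F(u_2, \bar u_2)\bigr)/(u_1 - u_2)$, taken to be $0$ on the set where $u_1 = u_2$; the resulting $V$ is only $L^\infty$-measurable (the quotient $\bar w/w$ has no limit across zeros of $w$), but $w$ then solves literally $\partial_t w = i(\Delta w + Vw)$, so the linear theorem applies verbatim once decay of $V$ at infinity is verified by Sobolev embedding and compactness of $\{u_i(t)\}_{t\in[0,1]}$ in $H^k$ — exactly the argument you give. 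Your alternative — observing that only a pointwise majorization $|(\partial_t - i\Delta)w| \le M|w|$ matters — is morally right and does hold for the closing Carleman inequality, but several of the supporting results behind Theorem \ref{EKPV th1} are not stated that way: the analogue of Lemma \ref{general KPV lemma} manipulates $P_\epsilon P_\pm(Vw_n)$ with $V$ acting $\mathbb{C}$-linearly, and the analogue of Lemma \ref{lemma 1 b)} routes $\text{Re}\,V$ and $\text{Im}\,V$ separately into the skew-symmetric/symmetric splitting. Each of these can be patched for $V_1 w + V_2 \bar w$ (replacing $\|V\|_\infty$ by $\|V_1\|_\infty + \|V_2\|_\infty$ throughout), so your route is sound, but it requires reopening those proofs rather than citing them as black boxes; the difference-quotient potential sidesteps the $\mathbb{R}$-linear versus $\mathbb{C}$-linear issue entirely. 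If you keep your decomposition, state the absorption as a lemma and verify it against each ingredient of the linear theorem; otherwise, switch to the single ratio potential.
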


\subsection{Statement of the result}
In this work, we extend the Hardy uncertainty principle to solutions to the hyperbolic Schrödinger equation with potential and to solutions to the hyperbolic NLS, by applying the same techniques as in \cite{escauriaza_hardys_2008}.
We state our first result for the hyperbolic Schr\"odinger equation with potential. 
\begin{restatable}{theoremn}{Hyperbolicmainresult}
\label{hyperbolic result}
Let $u\in C([0,1],L^2(\mathbb{R}^n))$ be a solution to the problem
\begin{align*}
    \partial_t u = i(L u + V(x,t)u)  \ \ \ in \ \mathbb{R}^n \times [0,1]
\end{align*}
$L$ defined as in \eqref{hyperbolic laplace},
where $V$ is bounded, and either
\begin{equation}\label{cond1OR}\sup_{t\in [0,1]}\|e^{\frac{|x|^2}{(\alpha t + \beta(1-t))^2}}V(t)\|_{L^\infty(\mathbb{R}^n)} < \infty, \end{equation}
or \begin{equation} \label{cond2OR}\lim_{R\to \infty}\int_{0}^1 \|V(t)\|_{L^\infty(\mathbb{R}^n\setminus B_{R})}dt=0.\end{equation}
If there exist constants $\alpha,\beta >0$ such that $\alpha \beta < 2$ and $\|e^{\frac{|x|^2}{\beta^2}}u_0\|_{L^2(\mathbb{R}^n)} $ and $\|e^{\frac{|x|^2}{\alpha^2}}u(1)\|_{L^2(\mathbb{R}^n)} $ are finite, then $u\equiv 0$.  
\end{restatable}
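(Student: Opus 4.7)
The plan is to adapt the Carleman-estimate and logarithmic-convexity argument of Escauriaza, Kenig, Ponce, and Vega to the hyperbolic Laplacian $L$. The strategy has two pieces: an upper bound showing that the weighted quantity $H(t) = \|e^{\phi(x,t)} u(t)\|_{L^2(\R^n)}^2$ satisfies a log-convexity-type inequality in $t$, so that the decay at $t=0$ and $t=1$ controls $H$ at intermediate times, together with a lower bound (in the spirit of the EKPV non-decay results) saying that a nontrivial solution cannot be too small on growing exterior regions. Under the condition $\alpha\beta<2$, the two bounds are incompatible, forcing $u\equiv 0$.

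To carry out the upper bound, I would take a time-dependent Gaussian weight $\phi(x,t)=a(t)|x|^2$ with boundary values $a(0)=1/\beta^2$ and $a(1)=1/\alpha^2$, and a profile $a(t)$ to be chosen. One conjugates the equation by $e^{\phi}$ and splits the resulting operator into its symmetric and antisymmetric parts, so that $\partial_t^2 \log H$ is expressed through the commutator of those parts plus terms absorbed by hypothesis \eqref{cond1OR} or \eqref{cond2OR} on $V$. A direct calculation gives
\begin{equation*}
e^{\phi} L e^{-\phi} = L - 2\sum_{j=1}^n \epsilon_j (\partial_{x_j}\phi)\,\partial_{x_j} + \Bigl(\sum_{j=1}^n \epsilon_j (\partial_{x_j}\phi)^2 - L\phi\Bigr),
\end{equation*}
where $\epsilon_j = +1$ for $j\le k$ and $\epsilon_j = -1$ for $j>k$. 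In the elliptic case ($\epsilon_j\equiv 1$) the commutator produces a positive multiple of $|x|^2$; in the hyperbolic case it yields instead the indefinite quadratic form $a(t)^2(|x_+|^2 - |x_-|^2)$. This sign indefiniteness is the central new obstacle.

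My plan to overcome it is to exploit the pointwise inequality $|x|^2 \ge \bigl||x_+|^2 - |x_-|^2\bigr|$, so that the positive contribution $\dot a(t)|x|^2$ coming from the time derivative of the weight can, for a well-chosen profile, dominate the indefinite bulk term; this forces an ODE inequality on $a(t)$ that is precisely what pins down the threshold $\alpha\beta<2$. The rigorous implementation, following EKPV, requires spatial truncation and time mollification of $u$ together with a limiting argument to legitimate the formal integrations by parts for merely $C([0,1];L^2)$-solutions, and in the hyperbolic setting one must further check that the mixed-signature commutator terms survive the truncation with uniform bounds. This is the main technical step and the one the author alludes to as requiring \emph{technical modifications}. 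Once the convexity-type estimate is secured, the lower-bound argument of EKPV transfers with the analogous adaptations (commutator computation adapted to $L$, with the splitting between $x_+$ and $x_-$ coordinates tracked throughout), and combining the upper and lower bounds forces $u\equiv 0$ on $\R^n\times[0,1]$.
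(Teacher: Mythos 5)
Your high-level outline (Appell/conformal reduction, weighted log-convexity from a Carleman decomposition into symmetric/skew-symmetric parts, then a second Carleman argument to drive the local $L^2$-norm to zero as a parameter $R\to\infty$) does match the overall architecture of the paper. But the specific obstruction you identify, and hence the mechanism you propose to overcome it, is incorrect.

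You claim that conjugating $L$ by a radial weight $\phi=a(t)|x|^2$ produces, via the commutator $[\mathcal S,\mathcal A]$, the indefinite quadratic form $a^2(|x_+|^2-|x_-|^2)$, and that the main new difficulty is to dominate this by $\dot a\,|x|^2$. That indefinite form is indeed present in the conjugated operator, but it sits entirely in the \emph{skew-symmetric} piece: with
\begin{align*}
\mathcal S &= \partial_t\phi - i\bigl(L\phi + 2\nabla\phi\cdot\nabla_H\bigr), \qquad
\mathcal A = i\bigl(L + \nabla\phi\cdot\nabla_H\phi\bigr),
\end{align*}
the multiplication $i\,\nabla\phi\cdot\nabla_H\phi = 4ia^2(|x_+|^2-|x_-|^2)$ is purely imaginary, hence skew-symmetric, and contributes nothing to $\mathrm{Re}\,\langle\mathcal A f, f\rangle$. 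The quantity that actually governs the convexity is $\langle(\partial_t\mathcal S+[\mathcal S,\mathcal A])f,f\rangle$, and a direct computation for $\phi=\gamma|x|^2$ gives
\begin{equation*}
\partial_t\mathcal S + [\mathcal S,\mathcal A] = 4\,\nabla\phi\cdot D^2_H\phi\,\nabla\phi - 4\,\nabla\cdot D^2_H\phi\,\nabla - L^2\phi = 32\gamma^3|x|^2 - 8\gamma\Delta,
\end{equation*}
because for a diagonal quadratic weight $D^2_H\phi = J\,D^2\phi\,J = 2\gamma J^2 = 2\gamma I$ is \emph{positive definite} just as in the elliptic case, and $L^2|x|^2=0$. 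So the sign problem you are trying to defeat with the pointwise bound $|x|^2\ge\bigl||x_+|^2-|x_-|^2\bigr|$ does not actually arise, and the constraint $\alpha\beta<2$ is not detected this way; it comes from the same (non-sharp) place as in Escauriaza–Kenig–Ponce–Vega.

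The genuine new difficulties in the hyperbolic setting, which your proposal does not touch, are: (i) to make the formal convexity rigorous one passes through a parabolic regularization $\partial_t v = A\Delta v + iB(Lv + \cdots)$, and then $[\mathcal S,\mathcal A]$ acquires \emph{cross}-commutators of the form $iAB\bigl([E_1,H_1]-[E_2,H_2]\bigr)$ between the elliptic regularizing piece and the hyperbolic piece, which do not cancel as they do when $k=n$; the paper eliminates them by replacing the radial truncated weight of EKPV with a \emph{separable} one $\phi_a(x)=\sum_j\psi_a(x_j)$ for which all mixed partials of $\phi$ vanish, so $D^2\phi=D^2_H\phi$ and the cross-commutators are zero. (ii) In the step deriving Gaussian decay from one-directional exponential decay, the relevant invariance group is not $O(n)$ but the indefinite orthogonal group $O(k,n-k)$; one therefore needs the exponential-decay lemma for every coordinate direction (including $k<j\le n$, where certain signs flip and the roles of the projections $P_\pm$ must be swapped), and the null cone $\{\langle\lambda,\lambda\rangle_{k,n-k}=0\}$ has to be handled as a measure-zero set in the subsequent integration in $\lambda$. (iii) The final Carleman estimate uses a \emph{translated} weight $\mu|x+Rt(1-t)\tilde\xi|^2-(1+\epsilon)R^2t(1-t)/(16\mu)$ with $\tilde\xi=(\xi_+,-\xi_-)$; since this weight again has vanishing mixed partials, $D^2_H\phi$ remains positive and no indefinite commutator term needs to be absorbed. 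As stated, your argument misidentifies the obstruction and would not produce a proof; the correct difficulties require the separable-weight construction, the $O(k,n-k)$-transitivity argument, and the translated-weight Carleman estimate, none of which are present in your plan.
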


\begin{remark} The first condition \eqref{cond1OR} on the potential $V$ is not as general as the one in Theorem \ref{EKPV th1}, which also includes a real time independent potential $V_1(x)$. We chose to restrain ourself for including this kind of potential to avoid technical difficulties related to the semi-group theory of the parabolic regularization of the hyperbolic Schrödinger propagator. 
\end{remark}

As a consequence of Theorem \ref{hyperbolic result}, we deduce the following result for the hyperbolic nonlinear Schrödinger equation, which in particular applies to the cubic hyperbolic NLS.

\begin{restatable}{theoremn}{Hyperbolicnonlinear}
\label{HNLS result}
Let $u_1$ and $u_2$ be $C([0,1],H^k(\mathbb{R}^n))$ solutions to the equation 
\begin{equation*}
\partial_t u = i(Lu + F(u,\bar{u})), 
\end{equation*} 
with $k\in\mathbb{Z}^+, \ k>n/2, \ F:\mathbb{C}^2\to \mathbb{C}, \ F\in C^k$ and $F(0)=\partial_uF(0)=\partial_{\bar{u}} F(0)=0.$ If there are positive constants $\alpha$ and $\beta$ with $\alpha\beta<2$ such that $\|e^{\frac{|x|^2}{\beta^2}}(u_1(0)-u_2(0))\|_{L^2(\mathbb{R}^n)},$ and $\|e^{\frac{|x|^2}{\alpha^2}}(u_1(1)-u_2(1))\|_{L^2(\mathbb{R}^n)}$  are finite, then $u_1\equiv u_2.$
\end{restatable}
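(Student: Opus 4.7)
The strategy is to reduce the nonlinear problem to the linear one by setting $w = u_1 - u_2$, producing an effective potential, and then applying Theorem \ref{hyperbolic result}.

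First, note that $w \in C([0,1], H^k(\mathbb{R}^n))$ satisfies
\begin{equation*}
\partial_t w = i\bigl(Lw + F(u_1,\bar u_1) - F(u_2,\bar u_2)\bigr).
\end{equation*}
Applying the fundamental theorem of calculus to $s \mapsto F(u_2 + sw,\, \bar u_2 + s\bar w)$ gives the decomposition $F(u_1,\bar u_1) - F(u_2,\bar u_2) = A(x,t)\,w + B(x,t)\,\bar w$, where
\begin{equation*}
A = \int_0^1 (\partial_u F)(u_2 + sw,\, \bar u_2 + s\bar w)\, ds, \qquad
B = \int_0^1 (\partial_{\bar u} F)(u_2 + sw,\, \bar u_2 + s\bar w)\, ds.
\end{equation*}
Since $k > n/2$, the Sobolev embedding $H^k \hookrightarrow C_0(\mathbb{R}^n)$ together with continuity of $u_j$ in time and compactness of $[0,1]$ yields $u_j(x,t) \to 0$ as $|x|\to\infty$ uniformly in $t$. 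Combined with $\partial_u F(0)=\partial_{\bar u} F(0)=0$ and the $C^k$ regularity of $F$, this shows $|A(x,t)|+|B(x,t)| \to 0$ as $|x|\to\infty$, uniformly in $t \in [0,1]$.

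To get a true linear equation of the form required by Theorem \ref{hyperbolic result}, I absorb the antilinear term into a bounded potential by setting
\begin{equation*}
V(x,t) = A(x,t) + B(x,t)\,\frac{\bar w(x,t)}{w(x,t)} \ \text{ where } w(x,t)\neq 0, \qquad V(x,t) = 0 \ \text{ otherwise.}
\end{equation*}
Then $|V| \le |A|+|B|$ is bounded and measurable, and inherits the uniform vanishing at infinity of $A$ and $B$, so that condition \eqref{cond2OR} holds (by dominated convergence applied to $\|V(t)\|_{L^\infty(\mathbb{R}^n\setminus B_R)}$). By construction $w$ solves
\begin{equation*}
\partial_t w = i(Lw + V(x,t)\,w) \quad \text{on } \mathbb{R}^n \times [0,1],
\end{equation*}
and the Gaussian decay hypotheses transfer directly to $w$ at $t=0$ and $t=1$. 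Theorem \ref{hyperbolic result} then gives $w\equiv 0$, i.e.\ $u_1 \equiv u_2$.

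The main obstacle is the second step: controlling the pointwise decay of $A$ and $B$ at spatial infinity uniformly in $t$, since the hypothesis $u_j \in C([0,1],H^k)$ does not directly provide this. The argument hinges on strengthening $H^k \hookrightarrow L^\infty$ to $H^k \hookrightarrow C_0$ for $k>n/2$, and on using the continuity of $t\mapsto u_j(\cdot,t) \in H^k$ on the compact interval $[0,1]$ to promote pointwise vanishing at infinity to uniform-in-$t$ vanishing. The remaining subtlety, namely that the trick of turning $B\bar w$ into $V w$ produces a merely measurable $V$, is harmless because Theorem \ref{hyperbolic result} only requires boundedness of $V$ together with \eqref{cond2OR}.
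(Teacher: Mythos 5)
Your proof is correct and matches the paper's approach: the paper directly sets $V=\frac{F(u_1,\bar{u}_1)-F(u_2,\bar{u}_2)}{u_1-u_2}$, which is exactly your $A + B\,\bar w/w$, and invokes the same Sobolev embedding and dominated convergence argument to verify \eqref{cond2OR}. Your fundamental-theorem-of-calculus decomposition and the uniform-in-$t$ decay argument simply make explicit what the paper states in one line.
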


The proof of Theorem \ref{EKPV th1} in \cite{escauriaza_hardys_2008} relies on Carleman's estimates. The techniques use only calculus and convexity arguments. However, the main challenge is that these computations are formal and providing a rigorous justification is not trivial. Formally, the proof of Theorem \ref{hyperbolic result} is almost the same as for Theorem \ref{EKPV th1} in \cite{escauriaza_hardys_2008}, but when it comes to making the arguments rigorous, some differences and technical difficulties occur. In the next subsection, we outline the main steps of the proof and highlight the key differences to the proof of Theorem \ref{EKPV th1}.\\

It is worth noting that in \cite{barcelo_hardy_2013}, Theorem \ref{EKPV th1} was extended for solutions of the electromagnetic Schrödinger equation 
$$\partial_t u = i(\Delta_A u + Vu),$$ where $\Delta_A= (\nabla-iA)^2$, and $A(x):\mathbb{R}^n\to\mathbb{R}^n$ is a real valued vector potential. In \cite{cassano_fanelli_sharp_2015} the result was also extended to be sharp.
In \cite{BarceloCassanoFanelli2024}, Barcelo, Cassano and Fanelli proved different unique continuation properties for solutions of the electromagnetic hyperbolic Schrödinger equation, by only assuming decay at one time instead of two. The techniques in \cite{BarceloCassanoFanelli2024} also rely on Carleman estimates and the use of the Appell transformation for the hyperbolic equation.

\subsection{Outline of the proof of Theorem \ref{hyperbolic result}.}The proof is divided into several steps.

\noindent \textit{Step 1: the conformal/Appell transformation.} The first step is to reduce the problem to the case where the parameters $\alpha$ and $\beta$ are equal. This can be done with the conformal/Appell transformation\footnote{The transformation in the hyperbolic case is not exactly the same as in the elliptic case, see in particular Lemma \ref{conformalappelltransformation}.}. Instead of assuming that $\|e^{\frac{|x|^2}{\beta^2}}u_0\|_{L^2(\mathbb{R}^n)}$ and $\|e^{\frac{|x|^2}{\alpha^2}}u(1)\|_{L^2(\mathbb{R}^n)}$ are finite for $\alpha \beta < 2$, we can assume that $\|e^{\gamma |x|^2}u_0\|_{L^2(\mathbb{R}^n)}$ and $\|e^{\gamma |x|^2}u(1)\|_{L^2(\mathbb{R}^n)}$ are finite for some $\gamma > \frac{1}{2}.$ \\

\noindent \textit{Step 2: heuristic argument.} The heuristic argument is the same as for the elliptic case in \cite{escauriaza_hardys_2008}. Let $u$ be a solution of the equation (\ref{HSEP}). We define $f=e^{\phi}u$ for some weight function $\phi=\phi_R,$ to be chosen later, depending on a large parameter $R$. Moreover, we let $H(t)=\|f(t)\|_{L^2(\mathbb{R}^n)}^2.$ Then $f$ satisfies the equation $$\partial_t f = (\mathcal{S+A})f,$$ for a symmetric operator $\mathcal{S}$ and a skew-symmetric operator $\mathcal{A},$ both depending on the weight function $\phi.$ Ideally, we would like to prove a log-convexity inequality for the function $H,$ by computing $\frac{d^2}{dt^2}\log{H(t)}.$ In particular, we want to choose $\phi$ such that $$\frac{d^2}{dt^2}\log{H(t)}\geq -h(R,\gamma),$$ where $h$ is a non-negative function depending on $\gamma$ and $\phi_R$. This will imply that the function $e^{-\frac{h(R,\gamma)}{2} t(1-t)}H(t)$ is logarithmically convex. Then, it follows that, for some $0<\epsilon<1,$
$$\|u(1/2)\|_{L^2(B_{R\epsilon/4})}\leq H(0)^{1/2}H(1)^{1/2}e^{-\tilde{h}(R,\gamma)}, \text{ where } \tilde{h}(R,\gamma)\longrightarrow +\infty \text{ when } R\to \infty \text{ and }\gamma > \frac{1}{2}.$$ If we let $R\to \infty,$ and $\gamma>1/2,$ the left-hand side goes to $\|u(1/2)\|_{L^2(\mathbb{R}^n)},$ while the right-hand side goes to 0 since $H(0)^{1/2}=\|e^{\gamma|x|^2}u_0\|_{L^2(\mathbb{R}^n)}$ and $H(1)^{1/2}=\|e^{\gamma |x|^2}u(1)\|_{L^2(\mathbb{R}^n)}$ are finite. This implies that $u(1/2)=0,$ and thus $u\equiv 0.$ \\

However, to rigorously justify this argument, we need to know in advance that $\|e^{\phi}u(t)\|_{L^2(\mathbb{R}^n)}$ is finite for all time $t\in [0,1]$ and for a suitable weight $\phi$. This is not obvious in general, even though we know that the weighted norm of the solution is finite at two times $0$ and $1.$ In \cite{escauriaza_hardys_2008}, a large part of the paper is dedicated to proving this result. Then the authors are able to conclude the proof by using a slightly different convexity argument, still relying on Carleman estimates. In this work, we follow the same path as in \cite{escauriaza_hardys_2008}. \\ 

\noindent \textit{Step 3: rigorous justification of the weighted norms of the solution for all $0 \le t \le 1$.}
The goal is to prove a persistence property for a Gaussian weighted $L^2$-norm of the solution $u$ of \eqref{HSEP}, as well as an average in time estimate of the Gaussian weighted $L^2$-norm of $\nabla u$, by assuming that the Gaussian weighted norm of $u$ is finite at times $t=0$ and $t=1,$ and under two different assumptions on the potential $V$. This result is the counterpart of Theorem 3 and Theorem 5 in \cite{escauriaza_hardys_2008} for the hyperbolic Schrödinger equation.
\begin{restatable}{theoremn}{Hth3} \label{theorem 3}
Let $u \in C([0,1],L^2(\mathbb R^n))$ be a solution of the hyperbolic Schrödinger equation with potential (\ref{HSEP}) such that, for some $\gamma>0$, 
\begin{equation} \label{01A}\|e^{\gamma|x|^2}u_0\|_{L^2(\mathbb{R}^n)} \text{ and 
 }\|e^{\gamma |x|^2}u(1)\|_{L^2(\mathbb{R}^n)}<\infty.\end{equation} If $V$ is a bounded potential satisfying either \begin{equation}\label{cond1}\sup_{t \in [0,1]}\|e^{\gamma |x|^2}V(t)\|_{L^\infty(\mathbb{R}^n)}<\infty \end{equation} or
 \begin{equation}\label{cond2} \lim_{R\to \infty} \|V\|_{L^1([0,1],L^\infty(\mathbb{R}^n\setminus B_R))} = 0,\end{equation}
 then for all $t\in [0,1]$,
\begin{equation}\label{IG1}\|e^{\gamma |x|^2}u(t)\|_{L^2(\mathbb{R}^n)} \leq N(V) \left(\|e^{\gamma |x|^2}u_0\|_{L^2(\mathbb{R}^n)}+\|e^{\gamma |x|^2}u(1)\|_{L^2(\mathbb{R}^n)}+\sup_{t\in [0,1]} \|u(t)\|_{L^2(\mathbb{R}^n)} \right),\end{equation}
and
\begin{equation} \label{IG2}\|\sqrt{t(1-t)}e^{\gamma|x|^2}\nabla u\|_{L^2(\mathbb{R}^n \times [0,1])} \leq N(V) \left( \|e^{\gamma |x|^2}u_0\|_{L^2(\mathbb{R}^n)} + \|e^{\gamma |x|^2}u(1)\|_{L^2(\mathbb{R}^n)} + \sup_{t\in [0,1]}\|u(t)\|_{L^2(\mathbb{R}^n)}\right),
\end{equation}
where \begin{equation*}
  N(V)=  \begin{cases}
 C_1(V) e^{N(C_1(V) + C_1(V)^2)} & \text{if $V$ satisfies }\eqref{cond1}, \\ 
        N (\sup_{t\in [0,1]}\|V(t)\|_{L^\infty(\mathbb{R}^n)})^2 & \text{if $V$ satisfies }\eqref{cond2},
    \end{cases}
\end{equation*} 
$C_1(V) = \sup_{t\in[0,1]} \|e^{\gamma |x|^2}V(t)\|_{L^\infty(\mathbb{R}^n)}e^{2\sup_{t\in [0,1]}\|Im V(t)\|_{L^\infty(\mathbb{R}^n)}}$ and $N$ is a positive constant.
\end{restatable}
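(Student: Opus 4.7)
The plan is to adapt the parabolic-regularization plus log-convexity argument of Escauriaza, Kenig, Ponce, and Vega (\cite{escauriaza_hardys_2008}, Theorems~3 and 5) to the hyperbolic setting. The obstruction to a purely formal proof is familiar: to differentiate $\|e^{\gamma|x|^2}u(t)\|_{L^2(\mathbb{R}^n)}^2$ along the flow one already needs to know this quantity is finite for intermediate $t$, which is precisely the conclusion we seek. To break the circularity I would replace \eqref{HSEP} by a regularized equation whose solutions are, by semigroup theory, smooth with finite Gaussian-weighted norms throughout $[0,1]$, prove \eqref{IG1}--\eqref{IG2} with constants uniform in the regularization parameter $\epsilon$, and then pass to the limit.

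Because $L$ is indefinite, placing $\epsilon+i$ in front of $L$ would not produce an analytic semigroup; instead I would regularize through the full elliptic Laplacian,
\begin{equation*}
    \partial_t u_\epsilon = \epsilon \Delta u_\epsilon + i L u_\epsilon + i V_\epsilon(x,t)\, u_\epsilon,
\end{equation*}
where $V_\epsilon$ is a smooth truncation of $V$ compatible with whichever of \eqref{cond1}--\eqref{cond2} is assumed. The Fourier symbol $-\epsilon|\xi|^2 + i(|\xi_+|^2-|\xi_-|^2)$ has non-positive real part, giving well-posedness on weighted $L^2$ and $u_\epsilon \to u$ in $C([0,1],L^2(\mathbb{R}^n))$ as $\epsilon \to 0$. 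Setting $f = e^{\phi} u_\epsilon$ for a time-dependent Gaussian weight $\phi(x,t) = \gamma|x|^2/a(t)^2$ with $a(0)=a(1)=1$, one writes $\partial_t f = \mathcal{S}f + \mathcal{A}f$, where $\mathcal{S}^\ast=\mathcal{S}$ and $\mathcal{A}^\ast=-\mathcal{A}$ in $L^2(\mathbb{R}^n)$. A careful computation of $\partial_t^2 \log H_\epsilon$, with $H_\epsilon(t) = \|f(t)\|_{L^2(\mathbb{R}^n)}^2$, yields a lower bound $\partial_t^2 \log H_\epsilon(t) \geq -K(\gamma, V)$ from the commutator $\langle[\mathcal{S},\mathcal{A}]f,f\rangle$; by the resulting three-lines convexity
\begin{equation*}
    H_\epsilon(t) \leq e^{Kt(1-t)/2}\, H_\epsilon(0)^{1-t}\, H_\epsilon(1)^{t},
\end{equation*}
so \eqref{IG1} follows by letting $\epsilon \to 0$ and using lower semicontinuity. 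The gradient estimate \eqref{IG2} is extracted by testing the symmetric-part identity $\partial_t H_\epsilon = 2\langle\mathcal{S}f,f\rangle$ against the multiplier $t(1-t)$: integration by parts produces a positive principal piece reproducing $\|\sqrt{t(1-t)}\,e^{\gamma|x|^2}\nabla u_\epsilon\|_{L^2(\mathbb{R}^n\times[0,1])}^2$, modulo terms already controlled by the weighted $L^2$-bound.

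The potential enters only through the constants. Under \eqref{cond1} the pointwise bound on $e^{\gamma|x|^2}V$ lets one absorb $e^{\phi}Vu_\epsilon$ directly into $\|f\|_{L^2(\mathbb{R}^n)}$, producing the exponential factor $e^{N(C_1(V)+C_1(V)^2)}$, and the correction $e^{2\sup\|\mathrm{Im}\,V\|_\infty}$ inside $C_1(V)$ arises from the lack of conservation $\partial_t\|u\|_{L^2}^2 = 2\int \mathrm{Im}(V)|u|^2\,dx$. Under \eqref{cond2} I decompose $V = V\mathbbm{1}_{B_R} + V\mathbbm{1}_{\mathbb{R}^n\setminus B_R}$: for $R$ large the tail is small in $L^1_t L^\infty_x$ and is absorbed, while the compactly supported part is handled by a standard bounded-potential energy estimate, yielding the polynomial dependence $N\sup_t\|V(t)\|_{L^\infty}^2$.

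The principal obstacle, and where the hyperbolic case genuinely departs from the elliptic one, is the lower bound on $[\mathcal{S},\mathcal{A}]$. In the elliptic setting this commutator reduces to a manifestly nonnegative algebraic expression because $\nabla^2\phi$ and the Hessian of the quadratic form of $\Delta$ are both positive multiples of the identity. With $L$ replacing $\Delta$, the two signatures interact with opposite signs on the coordinate blocks $(x_1,\dots,x_k)$ and $(x_{k+1},\dots,x_n)$, and it is not a priori clear that the commutator is bounded below. One must organize the computation block by block and show that the indefinite cross-terms, together with the $O(\epsilon)$ errors produced by $\epsilon\Delta$ meeting $L$, are either absorbable into lower-order weighted norms or vanish as $\epsilon \to 0$. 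Making this algebra rigorous, possibly after a hyperbolic-adapted conformal/Appell reduction of the weight as in Step~1 of the outline, is the technical heart of the proof.
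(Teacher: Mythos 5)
Your high-level skeleton — parabolic regularization by $\epsilon\Delta$ (correctly noting that $(\epsilon+i)L$ would not work since $L$ is indefinite), log-convexity of a Gaussian-weighted norm, a $t(1-t)$-multiplier for the gradient bound, and passage to the limit — matches the paper's strategy. But you flag the two genuine difficulties without resolving either, and your proposed resolutions would not close.

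First, the commutator. You write that one must ``organize the computation block by block'' and that the cross-terms between $\epsilon\Delta$ and $iL$ ``are either absorbable into lower-order weighted norms or vanish as $\epsilon\to 0$.'' Neither is the case for the EKPV-style radial truncation $\tilde\phi_a(|x|)$: the mixed partial derivatives of $\tilde\phi_a$ do not vanish, so $D^2\tilde\phi_a\neq D^2_H\tilde\phi_a$, and the commutators $[E_1,H_1]$ and $[E_2,H_2]$ contribute terms with no sign and no $\epsilon$-smallness. The paper's resolution is a different truncation: replace the radial cutoff by the \emph{separable} weight $\phi_a(x)=\sum_{j=1}^n\psi_a(x_j)$ with $\psi_a$ the one-dimensional EKPV profile. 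Then $D^2\phi_a$ is diagonal, so $D^2\phi_a=D^2_H\phi_a$ and the cross-commutators vanish identically, reducing $\partial_t\mathcal S+[\mathcal S,\mathcal A]$ to the same positive-definite expression as in the elliptic case (up to the $\Delta^2\phi_{a,\rho}$ and $L^2\phi_{a,\rho}$ errors of size $O(a)$). Your Appell-transformation suggestion addresses a different issue (reducing $(\alpha,\beta)$ to a single $\gamma$ in the main theorem) and does not touch this obstruction. Also, the time-dependent weight $\gamma|x|^2/a(t)^2$ you propose is not what the paper uses for this theorem; the energy estimate Lemma does use a decaying $s(t)|x|^2$, but the log-convexity lemma is for the \emph{fixed} weight $\gamma|x|^2$ via the truncated $\phi_{a,\rho}$.

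Second, condition \eqref{cond2}. Your decomposition $V=V\mathbbm{1}_{B_R}+V\mathbbm{1}_{\mathbb{R}^n\setminus B_R}$ is the right start, but ``the compactly supported part is handled by a standard bounded-potential energy estimate'' is not adequate: the source term $F_R=V\mathbbm{1}_{B_R}u$ is compactly supported in $x$, but the Carleman lemma still requires $M_2=\sup_t\|e^{\gamma|x|^2}F_R\|_{L^2}/\|u\|_{L^2}$ to be finite, and that is controlled only by $\sup_t\|u(t)\|_{L^2}$ — so far so good — yet the log-convexity route needs $\|e^{\gamma|x|^2}u(t)\|_{L^2}<\infty$ \emph{a priori} for intermediate $t$, which is exactly what you do not have. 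The paper breaks this with the Kenig–Ponce–Vega machinery: first propagate \emph{directional} exponential weights $e^{\beta x_j}$ using Fourier projections $P_\epsilon P_\pm$ and Calderón commutator estimates (with sign considerations that change in the $j>k$ directions); then propagate $e^{\lambda\cdot x}$ for a.e.\ $\lambda$, using invariance of $L$ under the indefinite orthogonal group $O(k,n-k)$ (a genuinely new ingredient relative to the elliptic $O(n)$ reduction, requiring both $1\le j\le k$ and $k<j\le n$ as base cases, and excluding the null cone $\langle\lambda,\lambda\rangle_{k,n-k}=0$); and finally integrate against a Gaussian in $\lambda$ via $\int e^{2\sqrt\gamma\lambda\cdot x-|\lambda|^2/2}d\lambda=(2\pi)^{n/2}e^{2\gamma|x|^2}$. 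This produces the $N(\sup_t\|V\|_\infty)^2$ constant; your sketch leaves the entire chain implicit, including the hyperbolic-specific group-theoretic step that the paper singles out as essential.
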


To prove Theorem \ref{theorem 3}, we follow the strategy in \cite{escauriaza_hardys_2008}. The main idea is to perform a parabolic regularization on (\ref{HSEP}) and prove similar estimates to \eqref{IG1} and \eqref{IG2} for solutions of the regularized equation 
\begin{equation}\label{PAR}
\partial_t v = A\Delta v + iB(L v + Vv + F),
\end{equation}
where $A>0$.
In fact, if $u$ is a solution to \eqref{HSEP} we work with $u_\epsilon$, a solution to the regularized equation
\begin{align}\label{P epsilon}
 \partial_t u_\epsilon = \epsilon \Delta u_\epsilon + i(Lu_\epsilon + F_\epsilon),  
\end{align}
where $\epsilon>0,$ $F_\epsilon =e^{\epsilon \Delta t}(Vu),$ so that $u_\epsilon (t) = e^{\epsilon \Delta t}u(t).$ 

We will first prove energy- and Carleman estimates for solutions of the parabolic equation \eqref{PAR}, which we want to apply to our solution $u_\epsilon.$ Then we will take the limit as $\epsilon \to 0$ to conclude the proof of Theorem \ref{theorem 3}. 

\noindent \textit{Step 4: energy estimate for the regularized equation \eqref{PAR}.} We prove an energy estimate for a specific weight function $\phi(x,t)=s(t)|x|^2,$ where $s(t)=\frac{\gamma A}{A + 8\gamma(A^2+B^2)t}$: 
\begin{align}\label{EE}   
   \|e^{s(t)|x|^2} &  \nonumber u(T)\|_{L^2(\mathbb{R}^n)}^2 +A\| \nabla(e^{s(t)|x|^2} u)\|_{L^2(\mathbb{R}^n \times [0,T])}^2  +2A\| 2s(t)|x|e^{s(t)|x|^2} u\|_{L^2( \mathbb{R}^n \times [0,T])}^2 \\ & \leq e^{M_V+|B|} \left(\|e^{\gamma |x|^2} u_0\|_{L^2(\mathbb{R}^n)} +  \|e^{s(t)|x|^2} F(t)\|_{L^2(\mathbb{R}^n \times [0,T])}^2\right),
    \end{align}
    for  $\gamma>0$, $T\in [0,1]$ and $M_V=\sup_{t\in [0,1]}\|B\,\text{Im}\ V(t)\|_{L^{\infty}(\mathbb{R}^n)}$.

\begin{remark}
For this result, we only need the weighted $L^2$-norm to be finite at one time, ($t=0$ here). However, the weight we propagate for $t\geq 0$ is smaller than $e^{\gamma  |x|^2}$ and decreases with time.
\end{remark}
\begin{remark}
The estimate \eqref{EE} differs from the corresponding estimate in \cite{escauriaza_hardys_2008}. Here we chose to add the two last terms on the left-hand side of \eqref{EE}. Controlling these terms will be useful to justify the Carleman estimate for the gradient of $u$ in estimate \eqref{C2} in Step 6.
\end{remark}

\noindent \textit{Step 5: Carleman estimate for the regularized equation \eqref{PAR}.}
Now, we use the energy estimate (\ref{EE}) to prove that for a solution $u$ of (\ref{PAR}) satisfying (\ref{01A}), we have, for any time $0\leq t \leq 1$,

\begin{align} \label{C}
   \|e^{\gamma |x|^2} u(t)\|_{L^2(\mathbb{R}^n)}\leq e^{N((B^2(M_1^2 +M_2^2) + |B|(M_1+M_2)) } \|e^{\gamma |x|^2}u_0\|_{L^2(\mathbb{R}^n)}^{1-t}\|e^{\gamma |x|^2}u(1)\|_{L^2(\mathbb{R}^n)}^t,
\end{align}
where $M_1=\sup_t\|V(t)\|_{L^\infty(\mathbb{R}^n)}$ and 
\begin{equation} \label{def:M2}
M_2 = \sup_{t\in [0,1]} \frac{\|e^{\gamma |x|^2} F(t)\|_{L^2(\mathbb{R}^n)}}{\|u(t)\|_{L^2(\mathbb{R}^n)}}.
\end{equation}
This is a classical Carleman argument, and the main problem is to rigorously justify that $\|e^{\gamma|x|^2}u(t)\|_{L^2(\mathbb{R}^n)}$ is finite for $0<t<1$. In \cite{escauriaza_hardys_2008}, for the elliptic Schrödinger equation, the authors achieve this by introducing the weight 
$$ \tilde{\phi}_a(x)= \begin{cases} |x|^2 & |x|<1 \\ \frac{ 2|x|^{2-a}-a}{2-a} & |x|\geq 1, 
\end{cases}
$$
for $0<a<1$
and define $\tilde{\phi}_{a,\rho} = \tilde{\phi}_a *\theta_\rho$, for a radial mollifier $\theta_\rho$, such that at infinity, $e^{\gamma \phi_{a,\rho}}u $ does not grow faster than $e^{s(t)|x|^2}u$, where $s(t)$ is the weight obtained in the energy estimate (\ref{EE}). However, in the hyperbolic case, some new cross terms between the parabolic regularization $A\Delta$ and the hyperbolic operator $iBL$ appear in the expression of the commutator $[\mathcal{S,A}]$ in the Carleman estimate. It is not clear how to deal with these terms. To avoid this issue, we chose to work with a different weight function $\phi_a$, where these specific cross terms vanish. In particular, we introduce the function, 

\begin{align*}
\psi_a(x_j) :=
 \begin{cases}
x_j^2  & if \ |x_j|<1 \\
\frac{2|x_j|^{2-a}-a}{2-a} & if \ |x_j|\geq 1,
 \end{cases}
\end{align*}

\begin{align*}
\phi_a(x) := \sum_{j=1}^{n}\psi_a(x_j),
\end{align*}
where $x\in\mathbb{R}^n$, $x=(x_1,\dots,x_j,\dots,x_n).$ 

For this weight function we have that $\|e^{\gamma\phi_{a,\rho}}u(t)\|_{L^2(\mathbb{R}^n)}\leq \|e^{s(t)|x|^2}u(t)\|_{L^2(\mathbb{R}^n)}<\infty$ by \eqref{EE}, and thus we can rigorously justify the Carleman argument in this case. Finally, we conclude the proof of (\ref{C}) by letting $a$ and $\rho$ to $0$. \\ 

\noindent \textit{Step 6: Carleman estimate for the gradient of the solutions to \eqref{PAR}.}
We also need a similar result for $e^{\gamma |x|^2 } \nabla u$. In particular, we show that
\begin{align}\label{C2}
    \nonumber \|\sqrt{t(1-t)}e^{\gamma|x|^2}\nabla & u\|_{L^2(\mathbb{R}^n\times [0,1])} +  \|\sqrt{t(1-t)}|x|e^{\gamma |x|^2}u\|_{L^2(\mathbb{R}^n\times [0,1])} \nonumber \\ &\leq N(A,B, \gamma,M_1) [\sup_{t\in [0,1]} \|e^{\gamma |x|^2}u (t)\|_{L^2(\mathbb{R}^n)} + \sup_{t\in [0,1]} \|e^{\gamma |x|^2} F\|_{L^2(\mathbb{R}^n)}],
\end{align}
where the constant $N$ remains finite when $A^2+B^2$ is bounded away from $0$.
In \cite{escauriaza_hardys_2008} the authors did not include a rigorous justification for this argument. However, thanks to \eqref{EE}, the the rigorous justification of (\ref{C2}) follows by arguing as above in the justification of (\ref{C}), relying on the same arguments as those used to prove the Carleman estimate (\ref{C}).\\

\noindent \textit{Step 7: Applying the Carleman estimates to the solutions $u_\epsilon$ of \eqref{P epsilon}.} To apply estimate \eqref{C} to the solution $u_\epsilon$ to \eqref{P epsilon}, we need to verify that $M_2$, defined in \eqref{def:M2}, is finite. 
At this point, we need to distinguish between the two assumptions on the potential $V$. When $V$ satisfies \eqref{cond1}, then  
\[M_2 \le \sup_{t \in [0,1]}\frac{\|e^{\gamma |x|^2}V(t)\|_{L^\infty(\mathbb{R}^n)}\|u(t)\|_{L^2(\mathbb{R}^n)}}{\|u_{\epsilon}(t)\|_{L^2(\mathbb{R}^n)}} <+\infty .\] 
Thus, we can apply \eqref{C} to $u_\epsilon,$ and let $\epsilon\to 0$ to obtain \eqref{IG1} in the case of \eqref{cond1}. 

When the potential satisfies \eqref{cond2}, there is no easy way to verify that $M_2$ is finite. Thus, we will prove that $\|e^{\gamma |x|^2}u(t)\|_{L^2(\mathbb{R}^n)}$ is bounded using a different argument. For this aim, we follow the strategy of \cite{kenig_unique_2003}. We start by proving the following result:
there exists $\epsilon_0>0$ such that if $u$ is a solution to the equation
\begin{equation}
i\partial_t u + Lu = Vu + F
\end{equation} and $\|e^{\beta x_j}u_0\|_{L^2(\mathbb{R}^n)}$ and $\|e^{\beta x_j}u(1)\|_{L^2(\mathbb{R}^n)}$ are finite for some $\beta\in \mathbb{R}$, $1\le j \le n$, $$\|V\|_{L^1_tL^\infty_x}\leq \epsilon_0$$ and $$F\in L^1([0,1],L^2(\mathbb{R}^n))\cap L^1([0,1]),L^2(e^{2\beta x_j}dx)),$$  then for some $N>0$ independent of $\beta,$
\begin{equation}\label{beta}   \sup_{t\in [0,1]}\|e^{\beta x_j}u(t)\|_{L^2(\mathbb{R}^n)}\leq N\left( \|e^{\beta x_j}u_0\|_{L^2(\mathbb{R}^n)} + \|e^{\beta x_j}u(1)\|_{L^2(\mathbb{R}^n)}+\|F\|_{L^1([0,1],L^2(e^{2\beta x_j}dx)}\right).
\end{equation}
For $1\leq j\leq k$, the proof of \eqref{beta} follows exactly as in the elliptic case in \cite{kenig_unique_2003}, 
so we will focus on the case $k<j\leq n$, highlighting the main technical differences with the elliptic case.

Using \eqref{beta}, we obtain an exponential decay for the solution in arbitrary directions:
\begin{equation}\label{lambda}   \sup_{t\in [0,1]}\|e^{\lambda\cdot x}u(t)\|_{L^2(\mathbb{R}^n)}\leq N\left( \|e^{\lambda\cdot x}u_0\|_{L^2(\mathbb{R}^n)} + \|e^{\lambda\cdot x}u(1)\|_{L^2(\mathbb{R}^n)}+\|F\|_{L^1([0,1],L^2(e^{2\lambda\cdot x}dx)}\right)
\end{equation}
for almost every $\lambda\in \mathbb{R}^n$. In the elliptic case, the decay estimate \eqref{lambda} follows directly from the decay estimate \eqref{beta} in the $j=1$ direction and the invariance of the Laplacian under the orthogonal group $O(n)$.  In the hyperbolic case, $L$ is invariant under the indefinite/pseudo orthogonal group $O(k,n-k)$, (see Subsection \ref{pseudo group}). Therefore, estimate \eqref{beta} is needed both for $1 \le j \le k$ and $k<j \le n$. We refer to the proof of Lemma \ref{general KPV lemma} for more details.

By integrating \eqref{lambda} in $\lambda$ and splitting the potential $V$ into a small potential and a potential localized in a ball, we obtain a Gaussian weighted decay estimate for the solutions of \eqref{HSEP} with \eqref{cond2}, which proves \eqref{IG1}: 
\begin{equation}\label{gaussian}
    \sup_{t\in [0,1]}\|e^{\gamma |x|^2}u(t)\|_{L^2(\mathbb{R}^n)}\leq N \sup_{t\in [0,1]} \|V(t)\|_{L^\infty(\mathbb{R}^n)}\left(\|e^{\gamma |x|^2}u_0\|_{L^2(\mathbb{R}^n)}+\|e^{\gamma |x|^2}u(1)\|_{L^2(\mathbb{R}^n)} + \sup_{t\in [0,1]}\|u(t)\|_{L^2(\mathbb{R}^n)}\right) .
\end{equation}

To prove \eqref{IG2}, we then apply \eqref{C2} to $u_\epsilon$ with $\gamma_\epsilon=\gamma/(1+8\gamma\epsilon)$ and observe that
\begin{equation*}
\sup_{t \in [0,1]} \|e^{\gamma_\epsilon |x|^2} F_\epsilon\|_{L^2(\mathbb{R}^n)} \leq C \sup_{t \in [0,1]} \|e^{\gamma |x|^2} (Vu)\|_{L^2(\mathbb{R}^n)} <\infty
\end{equation*}
in both cases of the potential $V$, (where we used \eqref{gaussian} in the second case). Finally, by letting $\epsilon\to 0$, we obtain the result for $u$. \\

\noindent \textit{Step 7: proof of the main result, Theorem \ref{hyperbolic result}}. In this step, no new bad terms appear, since there is no parabolic regularization, and thus the argument proceeds in the same way as in \cite{escauriaza_hardys_2008}. We start by proving a Carleman estimate for compactly supported functions in both space and time. In particular, for $$\phi = \mu |x+Rt(1-t)\tilde{\xi}|^2+(1+\epsilon)R^2t(1-t)/16 \mu, \epsilon>0, \mu >0, R>0, \ |\xi|=1, \text{ and } g\in C^{\infty}_0(\mathbb{R}^{n+1}),$$ $$R\sqrt{\frac{\epsilon}{8\mu}} \|e^{\phi}g\|_{L^2(\mathbb{R}^{n+1})} \leq \|e^{\phi}(\partial_t-iL)g\|_{L^2(\mathbb{R}^{n+1})}. $$ 

Then, we introduce the function $g(x,t)=\theta_M(x) \eta_R(t) u(x,t),$ where $\theta_M$ and $\eta_R$ are compactly supported cutoff functions in space and time respectively, and satisfy $g=u$ in an open ball in $\mathbb{R}^{n+1},$ depending on a large parameter $R$ and some $\epsilon>0$, depending on $\gamma.$ By applying the Carleman estimate, it follows that
  \begin{align*} \label{Est. after using lemma7}
    R\|e^\phi g\|_{L^2(\mathbb{R}^n \times [0,1])} & \leq N(\epsilon,\mu,\gamma) \left(R  \sup_{t\in [0,1]} \|e^{\gamma |x|^2} \tilde{u} \|_{L^2(\mathbb{R}^n)}  + \frac{1}{M} e^{\gamma R^2/\epsilon} \|e^{\gamma |x|^2} (|u| + |\nabla u|)\|_{L^2(\mathbb{R}^n \times [\frac{1}{2R}, 1-\frac{1}{2R}])}\right).
\end{align*}
Note that the quantity $\|e^{\gamma|x|^2}(|u| + |\nabla u|)\|_{L^2(\mathbb{R}^n \times [\frac{1}{2R},1-\frac{1}{2R}])}$ is finite thanks to Theorem \ref{theorem 3}, so by letting $M\to +\infty,$ the last term on the right-hand side goes to 0. Inside the ball $B_{R, \epsilon}$ we can bound $\phi$ from below, such that 
$$\phi(x,t) \geq N(\epsilon,\gamma) R^2 > 0,$$
which will imply that for some constant $N(\epsilon, \gamma)$
\begin{equation*}
\|u\|_{L^2(B_{R, \epsilon} )}\leq e^{-N(\epsilon, \gamma) R^2}\sup_{t\in [0,1]} \|e^{\gamma |x|^2} u \|_{L^2(\mathbb{R}^n)}.
\end{equation*}
By this, we are able to conclude the proof by letting $R\to+\infty.$\\

The paper is structured as follows: we start Section 2 with notation and preliminaries. In Section 3, we prove Theorem \ref{theorem 3}, which corresponds to Steps 3-6. Finally, in Section 4, we prove Theorems \ref{hyperbolic result} and \ref{HNLS result} for the hyperbolic nonlinear Schrödinger equation.

\section{Notation and Preliminaries}

\subsection{General notation}
\begin{itemize}
    \item $N$ or $C$ will denote arbitrary positive constants, which can change from line to line. Sometimes we write $N_\gamma$, $C(\gamma, \epsilon)$, etc. for some parameters $\gamma, \epsilon$, to specify that the constants may depend on the specific parameters. If the constant matters, we will define it properly.
\item We sometimes use $a\lesssim b$ if $a\leq Cb$ for a constant not depending on any specific parameters.
    \item $ L^p, \ 1\leq p \leq \infty,$ denotes the usual Lebesgue space with norm $\|f\|_{L^p}$.

\item $H^s = W^{s,2}$ denotes the usual $L^2$-based Sobolev spaces.

\item The Fourier transform is defined by $$\hat{f}(\xi)=\frac{1}{(2\pi)^{n/2}}\int_{\mathbb{R}^n} e^{-i\xi \cdot x}f(x)dx.$$
\end{itemize}
\subsection{The \lq \lq hyperbolic\rq\rq   Laplacian}

We fix $k\in \mathbb{N}$, $1\leq k < n$  and define the \lq\lq hyperbolic \rq\rq \, Laplace operator \begin{equation*}L=\sum_{j=1}^{k}\frac{\partial^2}{\partial{x_j}^2} - \sum_{j=k+1}^{n}\frac{\partial^2}{\partial{x_j}^2}= \Delta_+ - \Delta_{-}.\end{equation*}
Moreover, we denote \begin{align*}
\nabla_+ := \left(\frac{\partial}{\partial x_1},\cdot \cdot \cdot, \frac{\partial}{\partial_{x_k}}\right),  \quad \nabla_{-} := \left(\frac{\partial}{\partial x_{k+1}},\cdot \cdot \cdot, \frac{\partial}{\partial_{x_n}}\right) 
\end{align*}
and
$$\nabla_H := \left(\nabla_+, -\nabla_{-}\right).$$
Then it follows that $$L=\nabla \cdot \nabla_H.$$ 
Similarly, for $x\in \mathbb{R}^n$ we define
$$x_+ :=(x_1, \dots, x_k), \quad 
x_{-}:=(x_{k+1},\dots, x_n)$$ and
$$\tilde{x}=(x_+,-x_-)=(x_1,\dots, x_k, -x_{k+1}, \dots, -x_n),$$
so that $|\tilde x|^2 = |x|^2$.
It follows from the definitions that
\begin{align*}
\nabla f \cdot \nabla_H g  = \nabla_H f \cdot \nabla g, \quad
|\nabla f|^2  = |\nabla_Hf|^2, \quad
L(fg) = Lf + Lg + 2\nabla_Hf\cdot \nabla g.
\end{align*}

We also define the matrix of second order derivatives
\begin{align*}
D^2_H f: = \begin{bmatrix} 
\partial^2_{x_1} f & \dots  &  \partial^2_{x_1,x_k} f  & - \partial^2_{x_1,x_{k+1}} f & \dots & -\partial^2_{x_1, x_n} f \\
    \vdots & \ddots & \vdots & \vdots & \ddots & \vdots\\
   \partial^2_{x_k, x_1} f &\dots  & \partial^2_{x_k} f & -\partial^2_{x_k, x_{k+1}}f & \dots & -\partial^2_{x_k, x_n} f \\ 
   -\partial^2_{x_{k+1},x_1} f& \dots & -\partial_{x_{k+1},x_k} f& \partial^2_{x_{k+1}} f & \dots &\partial^2_{x_{k+1},x_n} f \\ 
   \vdots & \ddots & \vdots & \vdots &\ddots & \vdots \\
   -\partial^2_{x_n, x_1} f & \dots & -\partial^2_{x_n, x_{k}} f & \partial^2_{x_n, x_{k+1}} f & \dots & \partial^2_{x_n} f
    \end{bmatrix}
\end{align*}
which looks like the Hessian matrix, but for $ 1\leq i \leq k$ and $k+1\leq j\leq n$ the mixed derivative $\partial^2_{x_i, x_j}$ takes a negative sign. If both $1\leq i, j \leq k$ or $k+1 \leq i,j \leq n$ then $\partial^2_{x_i, x_j}$ takes a positive sign. 

Observe that if f is a function for which $\partial^2_{x_i,x_j} f=0$ for all $i\neq j$, then $D^2_Hf = D^2f$ where $D^2f$ is the Hessian matrix of $f.$ \\

\subsection{The orthogonal and the indefinite orthogonal group} \label{pseudo group}

The orthogonal group $O(n)$ consists of $n\times n$ invertible matrices $A$ satisfying $A^TA=I$, where $I$ is the identity matrix. 

Let $J$ be the non-degenerate bilinear form represented by the matrix 
\begin{equation*}
J = \text{diag}(\underbrace{1,\dots,1}_{k},\underbrace{-1,\dots,-1}_{n-k}).
\end{equation*}
We define the indefinite orthogonal group $O(k,n-k)$, sometimes also called pseudo orthogonal group or generalized orthogonal group, to be the group of invertible $n\times n$ matrices $A$ such that
\begin{align}
    A^TJA=J.
\end{align}
In particular, the group $O(1,3)$ is known as the Lorentz group and is of interest in relativity. For more information, see, for example, Chapter 1 in \cite{hall_lie_2015}, Chapter 4 in \cite{oneill_semi-riemannian_1983} and the references therein.

For $\lambda \in \mathbb{R}^n$ we use the notation
\begin{equation}
     \langle \lambda,\lambda\rangle_{k,n-k} = \lambda^TJ\lambda = \lambda_1^2 +\dots +\lambda_k^2 -\lambda_{k+1}^2-\dots - \lambda_n^2.
\end{equation}
Observe that $\langle \lambda,\lambda\rangle_{k,n-k}$ can be positive, negative and $0$ depending on $\lambda$.

\begin{remark} Using this notation, we can also write
\begin{align*}
    \nabla_H &= J\nabla, \quad L=\nabla \cdot J\nabla, \quad  D^2_H = JD^2 J.
\end{align*}
\end{remark}
We now connect these groups with the
differential operators $\Delta$ and $L$. Since $\Delta$ is invariant under the orthogonal group $O(n)$, it follows that if $u$ is a solution to the equation
$$\partial_t u = i(\Delta u + V(x,t)u + F(x,t)),$$ then the function $v(x,t) = u(Ax,t)$, for $A\in O(n)$, satisfies the Schrödinger equation
$$\partial_t v = i (\Delta v +V(Ax,t)v + F(Ax,t)).$$
Similarly, the operator $L$ is invariant under the group $O(k,n-k)$. Indeed, if $A\in O(k,n-k)$, and $y=Ax$
then, we have by the chain rule

\begin{align*}
Lu(y) =\nabla_y^TJ\nabla_y u(y) =\nabla_y^T AJA^T\nabla _y u(y) =(A^T\nabla_y)^T J (A^T \nabla_y) u(y) = \nabla_x^T J\nabla_x (u(Ax) )=L(u\circ A)(x).
\end{align*}

Recall that the action of $O(n)$ on the Euclidean sphere $S^{n-1}_c =\{x\in \mathbb{R}^n: |x|^2=c\}$
$$O(n) \times S^{n-1}_c \to S^{n-1}_c $$
is transitive. This means that for every $\lambda\in \mathbb{R}^n\setminus \{0\}$, there is $A\in O(n)$ such that $A\lambda = \sqrt{c}e_1,$ for $c=|\lambda|^2.$ 

Consider now the hyperbolic space \begin{equation*}
        H_c^{n-1}(k):=\{x\in \mathbb{R}^n :  x_1^2+\dots+x_k^2-x_{k+1}^2 -\dots -x_n^2=\langle x,x\rangle_{k,n-k}=c\}.  
    \end{equation*}
If $c\neq 0,$ the action $$O(k,n-k)\times H^{n-1}_c(k)\to H^{n-1}_c(k)$$ is transitive (see for example Chapter 3 in \cite{lee_riemannian_1997}). In particular,  let $\lambda \in \mathbb{R}^n$, with $c=\langle\lambda,\lambda\rangle_{k,n-k}\neq 0$. Then $\lambda\in H_c^{n-1}(k)$, and there exist $A\in O(k,n-k)$ such that
$$
A\lambda=
\begin{cases}
    \pm\sqrt{c}e_1& \text{if } c>0,\\
    \pm \sqrt{-c}e_n &\text{if }c<0.
\end{cases}$$
\begin{remark} \ \smallskip
    \begin{itemize}
        \item The $\pm$ depends on the sign on $\lambda_1$ in the case $c>0,$ and the sign of $\lambda_n$ if $c<0.$
        \item $e_1$ can be replaced by $e_j$ for any $1\leq j\leq k$ and $e_n$ can be replaced by any $e_j$ for $k+1\leq j\leq n.$ 
        \item In the case $c=0$ the above argument does not apply.
        \end{itemize}
\end{remark}

\subsection{Some basic lemmas}
First we state some basic bounds in $L^2$ obtained from standard energy estimate, similar to the elliptic case.
\begin{lemma}\label{Energyestimate}
 Suppose $u\in C([0,1],L^2(\mathbb{R}^n))$ satisfies
    \begin{equation*}
   \begin{cases} \partial_t u = i(L u + V(x,t) u) \ \ \ \text{in} \ \mathbb{R}^n \times [0,1] \\ 
   u(x,0) = u_0,
   \end{cases}
    \end{equation*}
    then for $N_V=e^{\sup_{t\in [0,1]} \|Im V(t)\|_{L^\infty(\mathbb{R}^n)}}$, 
    \begin{equation}
        N_V^{-1}\|u_0\|_{L^2(\mathbb{R}^n)} \leq \|u(t)\|_{L^2(\mathbb{R}^n)} \leq N_V\|u_0\|_{L^2(\mathbb{R}^n)}.
    \end{equation}
\end{lemma}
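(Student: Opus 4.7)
The plan is to run the standard $L^2$ energy method: differentiate $H(t) := \|u(t)\|_{L^2(\mathbb{R}^n)}^2$ in time, use the equation to eliminate the conservative part $iL$, and then close the estimate with Gronwall. Formally, one computes
\[
\tfrac{d}{dt} H(t) = 2\,\mathrm{Re}\,\langle \partial_t u, u\rangle = 2\,\mathrm{Re}\,\langle i(Lu + Vu), u\rangle = -2\,\mathrm{Im}\,\langle Lu, u\rangle -2\int_{\mathbb{R}^n}\mathrm{Im}\,V\,|u|^2\,dx.
\]
Since $L$ has real, constant coefficients and is formally self-adjoint, integration by parts gives $\langle Lu, u\rangle = -\int \nabla u \cdot J\nabla \bar u\,dx \in \mathbb{R}$, so the first term on the right vanishes. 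What remains is
\[
\tfrac{d}{dt} H(t) = -2\int_{\mathbb{R}^n}\mathrm{Im}\,V(x,t)\,|u(x,t)|^2\,dx,
\]
and hence $|\tfrac{d}{dt}H(t)| \le 2\,\|\mathrm{Im}\,V(t)\|_{L^\infty}\,H(t)$. Gronwall's inequality over $[0,t] \subset [0,1]$ then gives $e^{-2M_V t}H(0)\le H(t)\le e^{2M_V t}H(0)$ with $M_V = \sup_{t\in[0,1]}\|\mathrm{Im}\,V(t)\|_{L^\infty}$, and taking square roots yields the claimed inequality with $N_V = e^{M_V}$.

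The only genuine issue is that $u \in C([0,1],L^2(\mathbb{R}^n))$ need not be smooth enough to integrate by parts directly, so the computation above is a priori formal. To make it rigorous I would regularize: set $u_\rho = u * \theta_\rho$ for a space mollifier $\theta_\rho$ and note that $u_\rho$ solves
\[
\partial_t u_\rho = i(L u_\rho + (Vu)*\theta_\rho),
\]
with $u_\rho \in C([0,1], H^\infty(\mathbb{R}^n))$ (because the free propagator $e^{itL}$ is unitary on every $H^s$ and the Duhamel source $(Vu)*\theta_\rho$ is smooth in $x$ and in $L^\infty_tL^2_x$). For $u_\rho$ the integration by parts is justified, and one gets
\[
\tfrac{d}{dt}\|u_\rho(t)\|_{L^2}^2 = -2\int \mathrm{Im}\,V\,|u_\rho|^2\,dx + 2\,\mathrm{Re}\,\langle i\bigl((Vu)*\theta_\rho - V u_\rho\bigr), u_\rho\rangle.
\]
The commutator term vanishes as $\rho \to 0$ because $(Vu)*\theta_\rho \to Vu$ in $L^2$ (using boundedness of $V$ and $u_\rho \to u$ in $C([0,1],L^2)$), so passing to the limit recovers the identity for $u$ in the weak sense. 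Gronwall on this limiting identity gives the two-sided inequality.

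The main obstacle, as already indicated, is the rigorous justification rather than the computation itself; once one accepts the regularization argument sketched above, the lemma is immediate. An alternative would be to run the Duhamel identity $u(t) = e^{itL}u_0 + i\int_0^t e^{i(t-s)L}(Vu)(s)\,ds$ together with the unitarity of $e^{itL}$ on $L^2$, but this only yields the weaker bound involving $\|V\|_{L^\infty}$; to isolate $\mathrm{Im}\,V$ one really does need the energy identity above.
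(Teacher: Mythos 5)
Your proof is correct and follows the standard energy method that the paper itself invokes (the paper states this lemma without proof, noting only that it is a ``basic bound obtained from standard energy estimate, similar to the elliptic case''). The key observation — that $L$ is symmetric with real constant coefficients so $\langle Lu,u\rangle$ is real and drops out of $\partial_t\|u\|_{L^2}^2$, leaving only the $\mathrm{Im}\,V$ contribution — together with Gronwall on $[0,1]$ is exactly the intended argument, and your mollification step is an adequate way to justify the formal computation for $C([0,1],L^2)$ solutions.
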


Next, we restate the conformal/Appell transform for the hyperbolic Schrödinger equation from \cite{BarceloCassanoFanelli2024}.
\begin{lemma} \label{conformalappelltransformation} 
Let $$\partial_s u = i(L u + V(y,s)u + F(y,s)) \ \ \ \ \ \ \ \ \text{in} \ \mathbb{R}^n \times [0,1].$$
For $\alpha, \beta >0,$ $\gamma \in \mathbb{R}$, define $$\tilde{u}(x,t) =\Big(\frac{\sqrt{\alpha \beta}}{\alpha(1-t)+\beta t}\Big)^{n/2} u\Big(\frac{\sqrt{\alpha \beta } x}{\alpha(1-t)+\beta t}, \frac{\beta t} {\alpha(1-t)+\beta t}\Big) e^{\frac{(\alpha-\beta)(|x_+|^2-|x_{-}|^2)}{4i(\alpha(1-t)+\beta t)}}.$$
Then $\tilde{u}$ satisfies 
$$\partial_t \tilde{u} = i(L\tilde{u} + \tilde{V}(x,t)\tilde{u} + \tilde{F}(x,t)) \ \ \ \ \ \ \ \text{in } \mathbb{R}^n \times [0,1],$$
where
$$\tilde{V}(x,t) = \frac{\alpha \beta}{(\alpha(1-t)+\beta t)^2}V\Big(\frac{\sqrt{\alpha \beta } x}{\alpha(1-t)+\beta t}, \frac{\beta t} {\alpha(1-t)+\beta t}\Big)$$
$$\tilde{F}(x,t) = \left(\frac{\sqrt{\alpha \beta}}{\alpha(1-t)+\beta t}\right)^{n/2 +2}F\Big(\frac{\sqrt{\alpha \beta } x}{\alpha(1-t)+\beta t}, \frac{\beta t} {\alpha(1-t)+\beta t}\Big) e^{\frac{(\alpha-\beta)(|x_+|^2-|x_{-}|^2)}{4i(\alpha(1-t)+\beta t)}}$$
Moreover, 
\begin{equation} \label{weighted est. for F A transform}\|e^{\gamma|x|^2}\tilde{F}(t)\|_{L^2(\mathbb{R}^n)}=\frac{\alpha \beta}{(\alpha(1-t)+\beta t)^2} \|e^{\frac{\gamma \alpha \beta}{(\alpha s + \beta(1-s))^2} |x|^2} F(s)\|_{L^2(\mathbb{R}^n)},
\end{equation}
and
\begin{equation}\label{weighted est. for u A transform}
\|e^{\gamma |x|^2} \tilde{u}(t)\|_{L^2(\mathbb{R}^n)} = \|e^{\frac{\gamma \alpha \beta}{(\alpha s + \beta(1-s))^2}|x|^2}u(s)\|_{L^2(\mathbb{R}^n)}
\end{equation}
for $s=\frac{\beta t}{\alpha(1-t) + \beta t}.$
\end{lemma}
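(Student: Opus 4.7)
The lemma is a direct computation in the spirit of the Appell/conformal invariance of the free Schr\"odinger group, adapted to the hyperbolic setting. I would begin by introducing the compact notation
\begin{equation*}
\rho(t):=\frac{\sqrt{\alpha\beta}}{\alpha(1-t)+\beta t},\qquad s(t):=\frac{\beta t}{\alpha(1-t)+\beta t},\qquad \theta(x,t):=\frac{-(\alpha-\beta)(|x_+|^2-|x_-|^2)}{4(\alpha(1-t)+\beta t)},
\end{equation*}
so that $\tilde u(x,t)=\rho(t)^{n/2}u(\rho(t)x,s(t))e^{i\theta(x,t)}$ and $e^{\theta/i}=e^{-i\theta}$ matches the phase in the statement. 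I would record once and for all the elementary identities $\rho^2=s'$, $\rho'/\rho=-(\beta-\alpha)/(\alpha(1-t)+\beta t)$, and the hyperbolic derivatives $L(|x_+|^2-|x_-|^2)=2n$, $\nabla(|x_+|^2-|x_-|^2)=2\tilde x$, $\nabla_H(|x_+|^2-|x_-|^2)=2x$, which in turn give
\begin{equation*}
L\theta=\frac{-n(\alpha-\beta)}{2(\alpha(1-t)+\beta t)},\qquad \nabla_H\theta=\frac{-(\alpha-\beta)}{2(\alpha(1-t)+\beta t)}\,x,\qquad \nabla\theta\cdot\nabla_H\theta=\frac{(\alpha-\beta)^2(|x_+|^2-|x_-|^2)}{4(\alpha(1-t)+\beta t)^2}.
\end{equation*}

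Next I would compute $\partial_t\tilde u$ by the chain rule, grouping the contributions from $\rho^{n/2}$, from $u(\rho x,s)$, and from $e^{i\theta}$. Separately I would compute $L\tilde u=\rho^{n/2}L\bigl(u(\rho x,s)\,e^{i\theta}\bigr)$ using the hyperbolic product rule $L(fg)=Lf\cdot g+f\cdot Lg+2\nabla_H f\cdot\nabla g$ from the preliminaries, together with the scaling $L[u(\rho x,s)]=\rho^2 (Lu)(\rho x,s)$, the identities above for $L(e^{i\theta})=(iL\theta-\nabla\theta\cdot\nabla_H\theta)e^{i\theta}$, and the symmetry $\nabla_H u\cdot\nabla\theta=\nabla u\cdot\nabla_H\theta$ to convert the cross term into $-\dfrac{i\rho(\alpha-\beta)}{\alpha(1-t)+\beta t}\,x\cdot\nabla u(\rho x,s)\,e^{i\theta}$. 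Substituting the equation $\partial_s u=i(Lu+Vu+F)$ inside the $\partial_t u$ expansion and matching with $i L\tilde u$ reduces the claim to verifying the algebraic cancellations among the remaining lower-order terms; this is exactly where the specific phase $\theta$ is designed to work: the $\rho'\,x\cdot\nabla u$ term from $\partial_t u$ cancels the $2\nabla_H f\cdot\nabla g$ cross term in $L\tilde u$, and the $\tfrac{n}{2}\rho'/\rho$ term from differentiating $\rho^{n/2}$ cancels $i L\theta$. What survives is precisely $i\rho^2\bigl[V(\rho x,s)\,u+F(\rho x,s)\bigr]e^{i\theta}$ with a spare $\rho^2$ factor, which is exactly $i(\tilde V\tilde u+\tilde F)$ once the stated definitions of $\tilde V$ and $\tilde F$ are used.

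For the weighted norm identities \eqref{weighted est. for F A transform} and \eqref{weighted est. for u A transform}, I would perform the change of variables $y=\rho(t)x$ at fixed $t$, using $|e^{i\theta}|=1$ and Jacobian $\rho^{-n}$, and observing that $\alpha(1-t)+\beta t=\alpha\beta/(\alpha s+\beta(1-s))$ so that the transplanted weight becomes $\gamma/\rho^2=\gamma\alpha\beta/(\alpha s+\beta(1-s))^2$. For $\tilde F$ there is an extra factor of $\rho^{n/2+2}$ in the definition, producing $\rho^2=\alpha\beta/(\alpha(1-t)+\beta t)^2$ in front after the change of variables, matching the prefactor in \eqref{weighted est. for F A transform}.

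The main obstacle is bookkeeping rather than conceptual: several lower-order terms involving $x\cdot\nabla u$ and scalar multiples of $u$ appear both from $\partial_t$ and from $L$, and one must track them carefully to see the cancellations. The critical place where the hyperbolic character enters (and where the calculation differs from the classical Appell transform) is in the identities $L(|x_+|^2-|x_-|^2)=2n$ and $\nabla_H(|x_+|^2-|x_-|^2)=2x$: these have the same scalar structure as in the elliptic case ($\Delta|x|^2=2n$, $\nabla|x|^2=2x$), which is precisely why the phase $(\alpha-\beta)(|x_+|^2-|x_-|^2)/(4i(\alpha(1-t)+\beta t))$ (rather than $(\alpha-\beta)|x|^2/(4i(\alpha(1-t)+\beta t))$) is the correct one. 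Once that structural observation is in hand the proof is a mechanical verification, and I would relegate the final line-by-line check to the arXiv record of~\cite{BarceloCassanoFanelli2024}.
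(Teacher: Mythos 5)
Your proposal is correct, and since the paper does not include a proof of Lemma~\ref{conformalappelltransformation} but simply restates it as imported from \cite{BarceloCassanoFanelli2024}, there is no in-paper argument to compare against; what you outline is the expected direct verification. I checked the computation: writing $q(t)=\alpha(1-t)+\beta t$, $\rho=\sqrt{\alpha\beta}/q$, $\phi=|x_+|^2-|x_-|^2$, $\theta=-(\alpha-\beta)\phi/(4q)$, one finds after substituting $\partial_s u = i(Lu+Vu+F)$ that
\begin{equation*}
\partial_t\tilde u - iL\tilde u = \rho^{n/2}e^{i\theta}\Bigl[\tfrac{n}{2}\tfrac{\rho'}{\rho}U + L\theta\,U\Bigr] + \rho^{n/2}e^{i\theta}\Bigl[i\partial_t\theta + i\nabla\theta\cdot\nabla_H\theta\Bigr]U + \rho^{n/2}e^{i\theta}\Bigl[\rho'x\cdot\nabla u + 2\nabla U\cdot\nabla_H\theta\Bigr] + i\rho^{n/2+2}e^{i\theta}(Vu+F),
\end{equation*}
and each of the first three bracketed groups vanishes: $\tfrac{n}{2}\rho'/\rho=-L\theta=n(\alpha-\beta)/(2q)$; $\partial_t\theta=-(\alpha-\beta)^2\phi/(4q^2)=-\nabla\theta\cdot\nabla_H\theta$; and $\rho'x\cdot\nabla u = \rho(\alpha-\beta)q^{-1}x\cdot\nabla u = -2\nabla U\cdot\nabla_H\theta$. (Your phrasing ``the $\tfrac{n}{2}\rho'/\rho$ term cancels $iL\theta$'' is correct in spirit once the factor of $-i$ from $-iL\tilde u$ is applied, so the cancellation is really $\tfrac{n}{2}\rho'/\rho + L\theta = 0$; and you omit mention of the $\partial_t\theta$ versus $\nabla\theta\cdot\nabla_H\theta$ cancellation, which is also needed, but your blanket reference to ``the remaining lower-order terms'' covers it.) The structural observation that $\nabla_H\phi=2x$ and $L\phi=2n$ reproduce the elliptic scalar structure is precisely the right point, and it is why the phase must involve $|x_+|^2-|x_-|^2$ rather than $|x|^2$. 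The change-of-variable computations for \eqref{weighted est. for F A transform} and \eqref{weighted est. for u A transform}, including the identity $q=\alpha\beta/(\alpha s+\beta(1-s))$, also check out.
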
 

\section{Proof of Theorem \ref{theorem 3}}
\subsection{Energy estimate}
We start by proving an energy estimate for solutions of the parabolic equation \eqref{PAR}. As we commented in the introduction, we need to expand the energy estimate in \cite{escauriaza_hardys_2008} to be able to justify all the computations.  
\begin{lemma}\label{lemma 1 b)}
    Let $u\in C([0,1],L^2(\mathbb{R}^n)) \cap L^2([0,1],H^1(\mathbb{R}^n))$ satisfy
    \begin{equation*} \partial_t u = A\Delta u + iB(Lu + V(x,t) u + F(x,t)) \ \ \  \text{in } \ \mathbb{R}^n \times [0,1],\end{equation*}
    for $A>0$, $B\in \mathbb{R}$, $F\in L^2(\mathbb{R}^n\times [0,1]), \ V$ bounded. Then
    \begin{align*}   
   \|e^{s(t)|x|^2} & u(T)\|_{L^2(\mathbb{R}^n)}^2 +A\| \nabla(e^{s(t)|x|^2} u)\|_{L^2(\mathbb{R}^n \times [0,T])}^2  +2A\| 2s(t)|x|e^{s(t)|x|^2} u\|_{L^2( \mathbb{R}^n \times [0,T])}^2 \\ & \leq e^{M_V+|B|} \left(\|e^{\gamma |x|^2} u_0\|_{L^2(\mathbb{R}^n)} +  \|e^{s(t)|x|^2} F(t)\|_{L^2(\mathbb{R}^n \times [0,T])}^2\right),
    \end{align*}
    for $s(t)=\frac{\gamma A}{A+8(A^2+B^2)\gamma t}$, $\gamma>0$, $T\in [0,1]$ and $M_V=\sup_{t\in [0,1]}\|B\text{Im}\ V(t)\|_{L^{\infty}(\mathbb{R}^n)}$.
\end{lemma}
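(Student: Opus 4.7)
The plan is to conjugate the equation by the weight $e^{\phi}$ with $\phi(x,t) = s(t)|x|^2$, $s(0) = \gamma$, and then to control the time derivative of $\|f\|_{L^2}^2$, where $f = e^{\phi}u$. From the equation for $u$,
\[\partial_t f = A e^{\phi}\Delta u + iB e^{\phi}L u + iB V f + iB e^{\phi}F + \dot s|x|^2 f,\]
so the argument reduces to computing $\tfrac{d}{dt}\|f\|^2 = 2\,\mathrm{Re}\langle\partial_t f, f\rangle$ and integrating by parts carefully. Using $e^{\phi}\nabla u = \nabla f - \nabla\phi\, f$ and $2\,\mathrm{Re}(\nabla\phi\,f\cdot\overline{\nabla f}) = \nabla\phi\cdot\nabla|f|^2$, the parabolic contribution gives $A\,\mathrm{Re}\int e^{\phi}\Delta u\,\bar f = -A\|\nabla f\|^2 + A\||\nabla\phi|f\|^2$. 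For the hyperbolic contribution, the identity $\nabla f\cdot\nabla_H g = \nabla_H f\cdot\nabla g$, the reality of $\int e^{2\phi}\nabla\bar u\cdot\nabla_H u = \int e^{2\phi}(|\nabla_+ u|^2 - |\nabla_- u|^2)$, and the reality of $\nabla\phi\cdot\nabla_H\phi\,|f|^2$ together yield $\mathrm{Re}\bigl(iB\int e^{\phi}L u\,\bar f\bigr) = 2B\,\mathrm{Im}\int \nabla\phi\cdot\nabla_H f\,\bar f$. Combining everything,
\begin{align*}
\frac{d}{dt}\|f\|^2 &= -2A\|\nabla f\|^2 + 2A\||\nabla\phi|f\|^2 + 4B\,\mathrm{Im}\!\int \nabla\phi\cdot\nabla_H f\,\bar f \\
&\quad - 2B\!\int \mathrm{Im}(V)|f|^2 - 2B\,\mathrm{Im}\!\int e^{\phi}F\,\bar f + 2\!\int \dot\phi\,|f|^2.
\end{align*}

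The key step is to absorb the cross term $4B\,\mathrm{Im}\int\nabla\phi\cdot\nabla_H f\,\bar f$, which couples the parabolic regularization and the hyperbolic operator. Using $|\nabla_H f| = |\nabla f|$ together with Cauchy--Schwarz and Young's inequality,
\[\Bigl|4B\,\mathrm{Im}\!\int\nabla\phi\cdot\nabla_H f\,\bar f\Bigr| \le A\|\nabla f\|^2 + \frac{4B^2}{A}\||\nabla\phi|f\|^2.\]
With $\phi = s|x|^2$ one has $|\nabla\phi|^2 = 4s^2|x|^2$ and $\dot\phi = \dot s|x|^2$, so the coefficient of $|x|^2|f|^2$ on the right becomes $\tfrac{8(A^2+2B^2)}{A}s^2 + 2\dot s$. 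The choice $s(t) = \gamma A/(A+8\gamma(A^2+B^2)t)$ stated in the lemma solves $\dot s = -\tfrac{8(A^2+B^2)}{A}s^2$, which makes this coefficient equal $-8As^2$; moving it to the left produces the gain $8A\|s|x|f\|^2 = 2A\|2s|x|e^{\phi}u\|^2$, while what survives from the gradient terms is $A\|\nabla f\|^2 = A\|\nabla(e^{\phi}u)\|^2$. Estimating the potential by $2M_V\|f\|^2$ and the forcing by $|B|(\|f\|^2 + \|e^{\phi}F\|^2)$, integrating from $0$ to $T$ and applying Gronwall then yields the stated inequality, with the initial data appearing through $f(0) = e^{\gamma|x|^2}u_0$ since $s(0) = \gamma$.

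The main obstacle is the rigorous justification: under only $u\in C([0,1],L^2)\cap L^2([0,1],H^1)$ the weight $e^{s|x|^2}$ is unbounded, so neither $\tfrac{d}{dt}\|f\|^2$ is a priori well defined nor can the integrations by parts be performed directly. The plan is to replace $e^{\phi}$ by a smoothly truncated version, for instance $\chi_R(|x|)e^{\phi}$ with $\chi_R$ a radial cutoff (or a mollified variant), establish the identity and the Young-type bound for the regularized quantity, and then pass to the limit $R\to\infty$. The parabolic smoothing from $A\Delta$ controls all commutator errors produced by the cutoff, and the sign-favorable structure provided by the chosen $s(t)$ ensures the uniform bounds survive the limit. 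Retaining both the gradient term $A\|\nabla(e^{\phi}u)\|^2$ and the quadratic term $2A\|2s|x|e^{\phi}u\|^2$ on the left-hand side---the main modification relative to \cite{escauriaza_hardys_2008}---is exactly what will be needed later in Step 6 to make the Carleman estimate (\ref{C2}) for the gradient rigorous.
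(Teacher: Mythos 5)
Your formal computation is correct and is essentially the paper's: conjugate by $e^{s(t)|x|^2}$, integrate by parts, absorb the hyperbolic cross term $4B\,\mathrm{Im}\int\nabla\phi\cdot\nabla_H f\,\bar f$ by Cauchy--Schwarz and Young using $|\nabla_H f|=|\nabla f|$, and choose $s(t)$ to solve the Riccati equation $\dot s=-\tfrac{8(A^2+B^2)}{A}s^2$ so that the coefficient of $|x|^2|f|^2$ becomes $-8As^2$; the surviving $A\|\nabla f\|^2$ and $8A\|s|x|f\|^2$ are exactly the two extra left-hand side terms, and Gr\"onwall finishes the derivation. The constants and the role of the added gradient and weight terms (for Step~6) are all as in the paper.

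The gap is in the rigorous justification. You propose to truncate by multiplying the full Gaussian weight by a radial cutoff, i.e.\ to work with $g_R=\chi_R(|x|)e^{\phi}u$. This is a multiplicative cutoff \emph{outside} the exponent, and it produces commutator terms from $\nabla\chi_R$, $\Delta\chi_R$, supported where $|x|\sim R$, of size on the order of $e^{2\gamma R^2}\int_{|x|\sim R}\bigl(|u|^2+|\nabla u|^2\bigr)$. For $u\in C([0,1],L^2)\cap L^2([0,1],H^1)$ these need not tend to zero as $R\to\infty$. Your claim that ``the parabolic smoothing from $A\Delta$ controls all commutator errors'' does not hold: the heat flow gives interior regularity for $t>0$ but no spatial Gaussian decay of $u(t)$, and Gaussian decay of $u(t)$ is precisely what the lemma is meant to establish, so it cannot be assumed. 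The mechanism the paper (following EKPV) uses is structurally different and is the essential missing idea here: truncate the weight \emph{inside} the exponent, setting $\psi_R(x)=\min(|x|^2,R^2)$, mollifying to $\phi_{\rho,R}(x,t)=s(t)(\theta_\rho*\psi_R)(x)$, and defining $f_{\rho,R}=e^{\phi_{\rho,R}}u$. Then $e^{\phi_{\rho,R}}\le e^{\gamma R^2}$ guarantees $f_{\rho,R},\nabla f_{\rho,R}\in L^2$ for each fixed $R$, and, crucially, $\phi_{\rho,R}$ still satisfies the differential inequality $\partial_t\phi_{\rho,R}+\bigl(\tfrac{2B^2}{A}+2A\bigr)|\nabla\phi_{\rho,R}|^2\le 0$ uniformly in $R$ and $\rho$, so the energy identity closes with \emph{no} cutoff error terms at all. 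One then passes to the limit $R\to\infty$, $\rho\to 0$ by monotone convergence. Without this device, your limiting argument does not go through.
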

\begin{proof}

Let $f=e^{\phi} u,$ where $\phi=\phi(x,t)$ is a real-valued weight function to be chosen later. 
Then 
$$\partial_t f =  (\mathcal{S+A})f + iBe^\phi F,$$
where 
\begin{align*}
\mathcal{S}&=\partial_t \phi + A(|\nabla \phi|^2 + \Delta) - iB(L\phi + 2\nabla \phi \cdot \nabla_H) -  BIm V, \\
\mathcal{A}&=iB(\nabla \phi \cdot \nabla_H \phi + L) - A(\Delta \phi + 2\nabla \phi \cdot \nabla) + iB Re V ,
\end{align*}
and
\begin{align}\label{dtf}
    \partial_t \|f\|_{L^2(\mathbb{R}^n}^2) &=2 Re \langle Sf, f \rangle_{L^2(\mathbb{R}^n)} + 2Re \langle iB e^\phi F, f\rangle_{L^2(\mathbb{R}^n)}.
\end{align}
Integration by parts shows that
\begin{align*}
    Re\langle Sf, f\rangle &= -A \int_{\mathbb{R}^n} |\nabla f|^2 + \int_{\mathbb{R}^n} \left( A|\nabla \phi|^2 + \partial_t \phi  \right)|f|^2 + 2 B Im \int_{\mathbb{R}^n} \nabla_H \phi \cdot \nabla f  \bar{f} - B\int_{\mathbb{R}^n} Im V|f|^2.
\end{align*}
Cauchy Schwartz' and Young's inequalities show that
\begin{align*}
    2BIm\int_{\mathbb{R}^n}\nabla \phi \cdot \nabla_H f \bar{f} & \leq  \frac{A}{2}\int_{\mathbb{R}^n} |\nabla_H f|^2 + \frac{2B^2}{A}\int_{\mathbb{R}^n} |\nabla \phi f|^2, 
    \end{align*}
and
\begin{align*}
Re\langle Sf, f \rangle &\leq -\frac{A}{2}\int_{\mathbb{R}^n}|\nabla f|^2 + \int_{\mathbb{R}^n}((A+\frac{2B^2}{A})|\nabla \phi|^2 + \partial_t \phi) |f|^2 + \int_{\mathbb{R}^n}|BImV||f|^2 \\ &= -\frac{A}{2}\int_{\mathbb{R}^n}|\nabla f|^2 -A \int_{\mathbb{R}^n} |\nabla \phi f|^2+ \int_{\mathbb{R}^n}((2A+\frac{2B^2}{A})|\nabla \phi|^2 + \partial_t \phi) |f|^2 + \int_{\mathbb{R}^n}|BIm V||f|^2. 
\end{align*}
If $\phi$ is a function that satisfies 
\begin{equation}\label{deriv}
\left(\frac{2B^2}{A}+2A\right)|\nabla \phi|^2 + \partial_t \phi \leq 0 \quad \text{in } \mathbb{R}^n\times [0,1],
\end{equation}
then
\begin{align}\label{derivf}
\partial_t \nonumber\|f\|_{L^2(\mathbb{R}^n)}^2&\leq -A \|\nabla f\|_{L^2(\mathbb{R}^n)}^2-2A\|\nabla \phi f\|_{L^2(\mathbb{R}^n)}^2 +2\|BIm V(t)\|_{L^\infty(\mathbb{R}^n)}\|f\|_{L^2(\mathbb{R}^n)}^2 \\  & \quad +2|B|\|e^{\phi}F\|_{L^2(\mathbb{R}^n)}\|f\|_{L^2(\mathbb{R}^n)}.
\end{align}
Ideally, we want $\phi(x,t)=s(t)|x|^2,$ which satisfies \eqref{deriv} when $s$ solves the IVP 
\begin{equation}
    s'(t) + 8\left(\frac{B^2}{A} +A \right)s(t)^2=0, \quad s(0)=\gamma,
\end{equation}
This holds when $s(t)=\frac{\gamma A}{8(A^2+B^2)\gamma t + A}.$
Let $M_V = \sup_{t\in [0,1]} \|BIm V(t)\|_{L^\infty(\mathbb{R}^n)}$.
Integrating \eqref{derivf} from $0$ to $t$ and Young's inequality implies that
\begin{align*}
\|f\|_{L^2(\mathbb{R}^n)}^2 \leq  &\|f(0)\|_{L^2(\mathbb{R}^n)}^2 -A\int_{0}^{t} \|\nabla f\|_{L^2(\mathbb{R}^n)}^2 ds -2A\int_{0}^{t}\|\nabla \phi f\|_{L^2(\mathbb{R}^n)}^2 ds +|B|  \int_{0}^{t}\|e^{\phi} F \|_{L^2(\mathbb{R}^n)}^2 ds \\& + (M_V + |B|)  \int_{0}^{t} \|f\|_{L^2(\mathbb{R}^n)}^2 ds.
\end{align*}
Applying Grönwall's integral inequality, we deduce that
\begin{align*}
    \|f\|_{L^2(\mathbb{R}^n)}^2 \leq \psi(t) + \int_{0}^{t} \psi (s) (M_{V}+|B|) e^{(M_V + |B|)(t-s)}ds,
\end{align*}
where $$\psi(t) = -A\int_{0}^{t} \|\nabla f\|_{L^2(\mathbb{R}^n)}^2 ds -2A \int_{0}^{t} \|\nabla \phi f\|_{L^2(\mathbb{R}^n)}^2 ds + \|f(0)\|_{L^2(\mathbb{R}^n)} +  |B |\int_{0}^{t} \|e^{\phi}F\|_{L^2(\mathbb{R}^n)}^2 ds.$$ 
Moreover, we have that
\begin{align*}
   & (M_V + |B|) \int_0^{t} \psi(s)  e^{(M_V+|B|)(t-s)} ds \\ & \leq \|f(0)\|_{L^2(\mathbb{R}^n)}^2 (e^{(M_V + |B|)t} - 1) + |B|\int_{0}^{t} \|e^{\phi}F\|_{L^2(\mathbb{R}^n)}^2 d\tau (e^{(M_V+|B|)t}-1) \\ & \ \ -A\int_{0}^{t} \|\nabla f\|_{L^2(\mathbb{R}^n)}^2 d\tau (e^{(M_V+|B|)t}-1) - 2A\int_{0}^{t} \|\nabla \phi f\|_{L^2(\mathbb{R}^n)}^2 d\tau (e^{(M_V+|B|)t}-1).
    \end{align*}
Thus,
\begin{align*}
\|f\|_{L^2(\mathbb{R}^n)}^2 &\leq e^{(M_V+|B|)t}\Big(-A \int_{0}^t \|\nabla f\|_{L^2(\mathbb{R}^n)} ds -2A \int_{0}^t \|\nabla\phi f\|_{L^2(\mathbb{R}^n)} ds + \|f(0)\|_{L^2(\mathbb{R}^n)} \\ &\quad + |B|\int_{0}^t \|e^{\phi}F\|_{L^2(\mathbb{R}^n)} ds\Big), 
\end{align*}
which implies 
\begin{align*}
\|f\|_{L^2(\mathbb{R}^n)}^2 + A\|\nabla f\|_{L^2( \mathbb{R}^n \times [0,t])}^2 + 2A\|\nabla\phi f\|_{L^2( \mathbb{R}^n \times [0,t])}^2 &\leq e^{(M_V +|B|)} \left(\|f(0)\|_{L^2(\mathbb{R}^n)}  +\|e^{\phi}F\|^2_{L^2( \mathbb{R}^n \times [0,t])}\right).
\end{align*}
This proves the result formally in the case when $\phi(x,t)=s(t)|x|^2.$
To prove the result rigorously, we follow the argument in \cite{escauriaza_hardys_2008}. We define a cutoff function
\begin{align*}
\psi_R(x)=
 \begin{cases}
        |x|^2 & \text{if} \ \ \ |x|\leq R, \\
        R^2 & \text{if} \ \ \ |x|>R,
    \end{cases},
\end{align*}
a radial mollifier $\theta_\rho$ with integral 1, and set $\phi_{\rho,R}(x,t) = s(t)(\theta_\rho*\psi_R)(x),$ for $s(t) =\frac{\gamma A}{8(A^2+B^2)t\gamma + A}.$

It follows that  \begin{align*}
    \|f_{\rho, R}(t)\|_{L^2(\mathbb{R}^n)}\leq e^{\gamma R^2}\|u(t)\|_{L^2(\mathbb{R}^n)},
\end{align*}
so that $f_{\rho,R}\in L^2(\mathbb{R}^n)$ for all $t\in [0,1].$ Similarly, since 
$$\nabla f_{\rho, R} = e^{\phi_{\rho,R}}(\nabla \phi_{\rho,R}u +\nabla u),$$ and
$|\nabla \phi_{\rho,R}|\leq CR,$
we also have 
$$\|\nabla f_{\rho,R}\|_{L^2(\mathbb{R}^n)} \leq e^{\gamma R^2}\left(C\gamma R \|u\|_{L^2(\mathbb{R}^n)} + \|\nabla u\|_{L^2(\mathbb{R}^n)}\right) < \infty.$$
Moreover, 
\begin{align*}
    \partial_t\phi_{\rho,R} + \left( \frac{2B^2}{A}+2A\right)|\nabla \phi_{\rho,R}|^2 &=s'(t)(\theta_\rho * \psi_R) +\left( \frac{2B^2}{A}+2A\right)|\theta_\rho *\nabla \psi_R|^2 \\
    &\leq s'(t)(|x|^2+C(n)\rho^2) + s(t)^2\left( \frac{2B^2}{A}+2A\right)(C(n)(|x|^2+\rho^2),
\end{align*}
so that
\begin{align*}
  \partial_t\phi_{\rho,R} +8\left( \frac{2B^2}{A}+2A\right)|\nabla \phi_{\rho,R}|^2 & \leq s'(t)(|x|^2+C(n)\rho^2) + 8s(t)^2\left( \frac{B^2}{A} +A\right)C(n)(|x|^2 + \rho^2) \\ &\leq \left(s'(t)+8\left(\frac{B^2}{A} + A\right) s(t)^2)\right)C(n)(|x|^2+\rho^2)  =0,
\end{align*}
and $\phi_{\rho,R}$ satisfies \eqref{deriv} uniformly in $R$ and $\rho.$
Thus, by applying the formal argument for $f_{\rho,R}$ the argument follows as we let $\rho$ to $0$ and $R$ to $\infty.$
\end{proof}

\subsection{Carleman estimates}
Then we state a general Carleman estimate proven in \cite{escauriaza_hardys_2008}.
\begin{lemma} \label{lemma2}
    Suppose that $\mathcal{S}$ is a symmetric operator, $\mathcal{A}$ is a skew-symmetric operator, both can depend on a time variable, $G$ is a positive function, $f(x,t)$ is a reasonable function
    $$H(t) = \langle f, f \rangle_{L^2(\mathbb{R}^n)}, \ \ \ \ \ D(t) = \langle \mathcal{S}f, f \rangle_{L^2(\mathbb{R}^n)}, \ \ \ \ \  N(t) = \frac{D(t)}{H(t)}.$$
    Then
    \begin{align} \label{Lemma 2 statement 1}
        \partial_t^2 H &= 2\partial_t \text{Re} \, \langle \partial_t f - \mathcal{S} f -\mathcal{A} f, f \rangle_{L^2(\mathbb{R}^n)} + 2\langle \partial_t \mathcal{S} f + [\mathcal{S,A}]f, f\rangle_{L^2(\mathbb{R}^n)} \nonumber \\ &+\|\partial_t f - \mathcal{A}f +\mathcal{S}f\|_{L^2(\mathbb{R}^n)}^2 -\|\partial_t f - \mathcal{A}f -\mathcal{S}f\|_{L^2(\mathbb{R}^n)}^2,
    \end{align} and \begin{equation} \label{lemma 2 statement 2}
    N'(t) \geq \frac{\langle \partial_t {\mathcal S} f + [\mathcal{S,A}]f, f\rangle_{L^2(\mathbb{R}^n)} }{H} - \frac{\|\partial_t f - \mathcal{A}f -\mathcal{S} f\|_{L^2(\mathbb{R}^n)}^2}{2H}.
\end{equation}
Moreover, if 
\begin{equation} \label{lemma 2 criteria} |\partial_t f - \mathcal{A}f - \mathcal{S}f|\leq M_1|f| + G \ \ \ \text{in} \ \ \mathbb{R}^n \times [0,1], \ \ \ \partial_t\mathcal{S} + [\mathcal{S,A}] \geq - M_0,\end{equation}
and
$$M_2=\sup_{t\in [0,1]}\left\|\frac{G(t)}{f(t)}\right\|_{L^2(\mathbb{R}^n)} <\infty,$$
then $\log{H(t)}$ is convex in $[0,1]$ and there is a universal constant $N$ s.t

\begin{equation} \label{lemma 2 st. 3}
    H(t) \leq e^{N(M_0 +M_1 +M_2 + M_1^2 + M_2^2)}H(0)^{1-t} H(1)^{t} \ \ \ \ \text{when} \ 0\leq t\leq 1.
\end{equation}
\end{lemma}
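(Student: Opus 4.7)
My plan is to derive the three conclusions in order, using throughout the decomposition $\partial_t f = \mathcal{S}f + \mathcal{A}f + r$ with $r := \partial_t f - \mathcal{S}f - \mathcal{A}f$ and the basic identities $\text{Re}\langle \mathcal{A}f, f\rangle = 0$ and $2\,\text{Re}\langle \mathcal{A}f, \mathcal{S}f\rangle = \langle [\mathcal{S},\mathcal{A}]f, f\rangle$, which follow from skew-symmetry of $\mathcal{A}$ and symmetry of $\mathcal{S}$.

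For \eqref{Lemma 2 statement 1}, differentiating $H(t)=\langle f, f\rangle$ twice gives $H' = 2D + 2\,\text{Re}\langle r, f\rangle$ and $H'' = 2D' + 2\partial_t \text{Re}\langle r, f\rangle$ with $D' = \langle \partial_t \mathcal{S}f, f\rangle + 2\,\text{Re}\langle \mathcal{S}f, \partial_t f\rangle$. The polarization identity
\[
\|(\partial_t - \mathcal{A} + \mathcal{S})f\|^2 - \|(\partial_t - \mathcal{A} - \mathcal{S})f\|^2 = 4\,\text{Re}\langle (\partial_t - \mathcal{A})f, \mathcal{S}f\rangle
\]
rewrites $4\,\text{Re}\langle \mathcal{S}f, \partial_t f\rangle$ as the left-hand side plus $2\langle [\mathcal{S},\mathcal{A}]f, f\rangle$, and identifying $\|(\partial_t - \mathcal{A} - \mathcal{S})f\|^2 = \|r\|^2$ assembles \eqref{Lemma 2 statement 1}.

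For \eqref{lemma 2 statement 2}, I compute $N' = D'/H - DH'/H^2$ by again expanding $\partial_t f = \mathcal{S}f + \mathcal{A}f + r$ inside $\text{Re}\langle \mathcal{S}f, \partial_t f\rangle$. Regrouping with $D = NH$ and the Cauchy--Schwarz identity $\|\mathcal{S}f\|^2/H - N^2 = \|\mathcal{S}f - Nf\|^2/H \ge 0$ yields
\[
N'(t) = \frac{\langle (\partial_t \mathcal{S} + [\mathcal{S},\mathcal{A}])f, f\rangle}{H} + \frac{2\|\mathcal{S}f - Nf\|^2}{H} + \frac{2\,\text{Re}\langle \mathcal{S}f - Nf, r\rangle}{H}.
\]
Applying Young's inequality $2ab \le 2a^2 + b^2/2$ with $a = \|\mathcal{S}f - Nf\|/\sqrt{H}$, $b = \|r\|/\sqrt{H}$ to the cross term absorbs it into the middle term exactly and leaves $-\|r\|^2/(2H)$, which is \eqref{lemma 2 statement 2}.

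For \eqref{lemma 2 st. 3}, the pointwise bound $|r| \le M_1|f| + G$ yields $\|r\|^2/(2H) \le M_1^2 + M_2^2$, so combined with $\partial_t \mathcal{S} + [\mathcal{S},\mathcal{A}] \ge -M_0$ the bound \eqref{lemma 2 statement 2} becomes $N'(t) \ge -C_0$ with $C_0 := M_0 + M_1^2 + M_2^2$. Thus $\tilde N(t) := N(t) + C_0 t$ is non-decreasing. Integrating $(\log H)' = 2N + 2\,\text{Re}\langle r, f\rangle/H$, whose second term is bounded pointwise by $2(M_1 + M_2)$, gives
\[
\log H(t) = \log H(0) + 2\int_0^t N(s)\,ds + R(t), \qquad |R(t)| \le 2(M_1 + M_2).
\]
Substituting $N = \tilde N - C_0 s$ and using the monotonicity inequality $t\int_0^1 \tilde N\,ds - \int_0^t \tilde N\,ds \ge 0$ (immediate by splitting the integral at $s=t$ and using that $\tilde N$ is non-decreasing) yields $(1-t)\log H(0) + t\log H(1) - \log H(t) \ge -C_0 t(1-t) - 4(M_1 + M_2)$, and exponentiating gives \eqref{lemma 2 st. 3}; the convexity of $\log H$ in the degenerate case $M_0 = M_1 = M_2 = 0$ follows from $N' \ge 0$ and $r \equiv 0$. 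The main delicate point is the precise Young's constant in Step 2 so that $\|\mathcal{S}f - Nf\|^2/H$ is absorbed with no slack; everything else is careful book-keeping with symmetric/skew-symmetric inner products.
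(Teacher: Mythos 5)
The paper itself gives no proof of this lemma: it is stated and attributed to Escauriaza--Kenig--Ponce--Vega (\emph{J.~Eur.~Math.~Soc.}~10, 2008). Your argument is correct and is, in substance, the same frequency-function computation used there: write $\partial_t f = \mathcal{S}f+\mathcal{A}f + r$, use $\mathrm{Re}\langle\mathcal{A}f,f\rangle=0$ and $2\,\mathrm{Re}\langle\mathcal{A}f,\mathcal{S}f\rangle=\langle[\mathcal{S},\mathcal{A}]f,f\rangle$ to identify the two norms in \eqref{Lemma 2 statement 1} via the parallelogram/polarization identity, obtain the exact expression
$N' = H^{-1}\langle(\partial_t\mathcal{S}+[\mathcal{S},\mathcal{A}])f,f\rangle + 2H^{-1}\|\mathcal{S}f-Nf\|^2 + 2H^{-1}\mathrm{Re}\langle\mathcal{S}f-Nf,r\rangle$, absorb the cross term via Young with constant $2$ to get \eqref{lemma 2 statement 2}, and integrate a shifted monotone frequency $\tilde N=N+C_0t$ together with the pointwise bound $|2\,\mathrm{Re}\langle r,f\rangle/H|\le 2(M_1+M_2)$ to get \eqref{lemma 2 st. 3}. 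One small interpretive remark, which you handle implicitly and correctly: the quantity $M_2=\sup_t\|G(t)/f(t)\|_{L^2}$ is to be read as the ratio $\|G(t)\|_{L^2}/\|f(t)\|_{L^2}$ (as in the paper's definition of $M_2$ in \eqref{def:M2}), which is exactly what you need for $\|G\|\le M_2\sqrt{H}$; and the literal assertion ``$\log H$ is convex'' holds only in the degenerate case $M_0=M_1=M_2=0$, the quantitative content being \eqref{lemma 2 st. 3}, as you observe.
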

\bigskip

Now, we apply this Carleman estimate to the equation \eqref{PAR}.

\begin{lemma}\label{Lemma3}
Let $u\in C([0,1],L^2(\mathbb{R}^n)) \cap L^2([0,1], H^1(\mathbb{R}^n))$ be a solution to the equation
\begin{equation}
    \partial_t u = A \Delta u + iB(Lu + Vu + F(x,t)), \ \ \ \text{in }\mathbb{R}^n \times [0,1]
\end{equation}
for $A>0, B\in \mathbb{R}$, $V$ a complex-valued potential, $\gamma>0,$ and $\sup_{t\in[0,1]} \|V(t)\|_{L^\infty(\mathbb{R}^n)} \leq M_1.$ Set $$M_2 = \sup_{t\in[0,1]} \frac{ \|e^{\gamma|x|^2}F(t)\|_{L^2(\mathbb{R}^n)}}{\|u(t)\|_{L^2(\mathbb{R}^n)}},$$ and assume that $\|e^{\gamma|x|^2}u(1)\|_{L^2(\mathbb{R}^n)}, \|e^{\gamma |x|^2} u_0\|_{L^2(\mathbb{R}^n)}$ and $M_2$ are finite. Then $\|e^{\gamma |x|^2 }u(t)\|_{L^2(\mathbb{R}^n)}$ is logarithmically convex in $[0,1],$ and there is a universal $N>0$ such that
\begin{align*}
\|e^{\gamma |x|^2}u(t)\|_{L^2(\mathbb{R}^n)} \leq e^{N(B^2(M_1^2+M_2^2) + |B|(M_1+M_2)}  \|e^{\gamma |x|^2 }u_0\|_{L^2(\mathbb{R}^n)}^{1-t}\|e^{\gamma |x|^2}u(1)\|_{L^2(\mathbb{R}^n)}^{t}.
\end{align*}
for $0\leq t \leq 1$.
\end{lemma}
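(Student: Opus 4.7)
The plan is to apply the general Carleman estimate of Lemma \ref{lemma2} with $f = e^{\gamma|x|^2}u$ and forcing term $G = |B|e^{\gamma|x|^2}|F|$. Using the equation for $u$, I would decompose $\partial_t f - (\mathcal{S}+\mathcal{A})f$ into symmetric and skew-symmetric parts exactly as in the proof of Lemma \ref{lemma 1 b)}, with the time-independent weight $\phi = \gamma|x|^2$; the first hypothesis of \eqref{lemma 2 criteria} then reads $|\partial_t f - (\mathcal{S}+\mathcal{A})f| \le |B|\,M_1\,|f| + G$. Because $\phi$ is time-independent, $\partial_t\mathcal{S} = 0$, and a direct computation of $[\mathcal{S},\mathcal{A}]$ produces a positive principal contribution coming from $\langle D^2\phi\,\nabla f,\nabla f\rangle$ and $\langle D^2\phi \cdot \nabla\phi,\nabla\phi\rangle|f|^2$ up to an error of size $O(B^2M_1^2 + |B|M_1)$. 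This yields $\partial_t\mathcal{S} + [\mathcal{S},\mathcal{A}] \ge -M_0$ with $M_0 \lesssim B^2M_1^2 + |B|M_1$, and \eqref{lemma 2 st. 3} then formally delivers the stated bound.

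The main obstacle is making this rigorous: Lemma \ref{lemma2} requires $H(t) = \|f(t)\|_{L^2(\mathbb{R}^n)}^2$ to be finite on all of $[0,1]$, which is precisely what we want to prove. Following \cite{escauriaza_hardys_2008}, I would approximate $\phi$ by a family of truncated, mollified weights $\phi_{a,\rho}$, apply Lemma \ref{lemma2} to $f_{a,\rho} = e^{\gamma\phi_{a,\rho}}u$, and pass to the limit $\rho\to 0$ and then $a\to 0^+$. The hyperbolic-specific difficulty, noted in Step 5 of the introduction, is that $[\mathcal{S},\mathcal{A}]$ now contains cross terms of type $AB\,\mathrm{tr}(D^2\phi_{a,\rho} \cdot D^2_H\phi_{a,\rho})$ produced by the interaction of the parabolic regularization $A\Delta$ with $iBL$; for the radial truncation $\tilde\phi_a$ of the elliptic case the off-diagonal entries of $D^2\tilde\phi_{a,\rho}$ are nonzero and of indefinite sign, and I see no clean way to absorb them. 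The remedy is the additive weight $\phi_a(x) = \sum_{j=1}^n \psi_a(x_j)$ of Step 5, mollified by a tensor-product mollifier $\theta_\rho(x) = \prod_j \tilde\theta_\rho(x_j)$, so that $\partial^2_{x_i x_j}\phi_{a,\rho} \equiv 0$ for $i \neq j$. Then $D^2_H\phi_{a,\rho} = D^2\phi_{a,\rho}$ is diagonal and nonnegative, and the troublesome cross terms vanish identically.

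With this choice, $|\nabla\phi_{a,\rho}(x)|^2 \le C(|x|^2+1)$ uniformly in $a,\rho$, so by selecting the parameters in Lemma \ref{lemma 1 b)} so that $e^{\gamma\phi_{a,\rho}} \lesssim e^{s(t)|x|^2}$ pointwise, the energy estimate \eqref{EE} gives uniform bounds on $\|f_{a,\rho}(t)\|_{L^2(\mathbb{R}^n)}$ and on $\|\nabla f_{a,\rho}\|_{L^2(\mathbb{R}^n\times[0,1])}$. These bounds legitimize all integrations by parts in the identities \eqref{Lemma 2 statement 1}--\eqref{lemma 2 statement 2} and deliver the Carleman inequality \eqref{lemma 2 st. 3} for $f_{a,\rho}$ with constants uniform in $a,\rho$; convexity of $\psi_a$ preserves the commutator lower bound uniformly as well. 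Monotone convergence as $\rho \to 0$ and then $a \to 0^+$ transfers both the estimate and the log-convexity of $H$ from $f_{a,\rho}$ to $f = e^{\gamma|x|^2}u$, which gives the claimed bound with exponent $N(B^2(M_1^2+M_2^2) + |B|(M_1+M_2))$.
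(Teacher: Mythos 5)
Your plan is essentially the paper's own proof: apply the general Carleman Lemma \ref{lemma2} to $f=e^{\phi}u$ with the time-independent weight $\phi=\gamma|x|^2$, and make the argument rigorous by replacing $\gamma|x|^2$ with the coordinate-separable weight $\phi_{a,\rho}=\gamma(\phi_a*\theta_\rho)$, $\phi_a(x)=\sum_{j}\psi_a(x_j)$, precisely because the vanishing of the mixed second derivatives forces $D^2_H\phi_{a,\rho}=D^2\phi_{a,\rho}$ and kills the cross-commutators $[E_1,H_1]$, $[E_2,H_2]$ coming from the interaction between $A\Delta$ and $iBL$. You have correctly identified both the hyperbolic-specific obstruction and its remedy. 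Two remarks are in order, however.

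First, a bookkeeping slip: you write that the computation of $[\mathcal{S},\mathcal{A}]$ itself produces an error of size $O(B^2M_1^2+|B|M_1)$ and hence $M_0\lesssim B^2M_1^2+|B|M_1$. This is not what happens. With $V$ moved into the forcing, as you correctly set up in the line $|\partial_t f-(\mathcal{S}+\mathcal{A})f|\le|B|M_1|f|+G$, the commutator $\partial_t\mathcal{S}+[\mathcal{S},\mathcal{A}]$ is a purely geometric quantity depending only on $\phi$ and its derivatives; for $\phi=\gamma|x|^2$ it is exactly nonnegative and $M_0=0$, while for the regularized weight the lower bound $-M_0(a,\rho)$ comes solely from the $\Delta^2\phi_{a,\rho}$ and $L^2\phi_{a,\rho}$ remainders, giving $M_0(a,\rho)=C(\rho,\theta)a$, which has no $M_1$-dependence. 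The terms $B^2M_1^2+|B|M_1$ in the final exponent enter through \eqref{lemma 2 st. 3} by substituting the Lemma \ref{lemma2} constant $M_1\mapsto |B|M_1$, not through $M_0$. The final bound you obtain happens to be the same, but the attribution is wrong.

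Second, and more consequentially, you propose taking the limits in the order $\rho\to 0$ first and then $a\to 0^+$. This order does not close: the constant in $M_0(a,\rho)=C(\rho,\theta)a$ comes from quantities like $\|\partial_j\theta_\rho\|$ in the estimates \eqref{Bound for D}, \eqref{Bound for L}, and $C(\rho,\theta)\to\infty$ as $\rho\to 0$ with $a$ fixed, so $M_0(a,\rho)$ does not stay bounded. The correct order, and the one used in the paper, is $a\to 0$ first (so that $M_0(a,\rho)=C(\rho,\theta)a\to 0$ for fixed $\rho$) and only then $\rho\to 0$ (to remove the $e^{C(n)\rho^2}$ factor in $M_{2,\rho}$ and to pass from $\phi_{0,\rho}$ to $\gamma|x|^2$). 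As a small final point, the tensor-product mollifier is not needed: since $\partial^2_{x_ix_j}\phi_a\equiv 0$ for $i\neq j$ in the distributional sense, convolution with any radial mollifier preserves this and gives $\partial^2_{x_ix_j}\phi_{a,\rho}\equiv 0$ automatically.
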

\begin{proof}
Define $f=e^{\phi }u.$ Now $f$ satisfies the equation
$$\partial_t f = (\mathcal{S}+\mathcal{A})f + i(Vf + e^{\phi}F),$$
where \begin{align*}
\mathcal{S}&=\partial_t \phi + A(|\nabla \phi|^2 + \Delta) - iB(L\phi + 2\nabla \phi \cdot \nabla_H)=:\partial_t \phi + A {E}_1 -iBH_2  \\ \mathcal{A}&= iB(\nabla \phi \cdot \nabla_H \phi + L) - A(\Delta \phi + 2\nabla \phi \cdot \nabla):=iBH_1 -A E_2.\end{align*}  

It follows that 
\begin{align*}[\mathcal{S,A}] & = -A^2(E_1E_2 -E_2E_1) -B^2(H_1H_2-H_2H_1) +iAB (E_1H_1 - H_1E_1 + H_2E_2 - E_2H_2)  \\ & \ \ \ +A(E_2 \partial_t \phi -\partial_t \phi E_2) + iB(\partial_t \phi H_1 -H_1 \partial_t \phi),
\end{align*}
and by a direct computation we have
\begin{align*}[\mathcal{S,A}] &= A^2(4\nabla \phi \cdot D^2\phi \nabla \phi - 4\nabla \cdot (D^2 \phi \nabla) - \Delta^2 \phi) + B^2(4\nabla \phi \cdot D^2_H \phi \nabla \phi -4 \nabla \cdot D^2_H \phi \nabla - L^2 \phi)\\ & \ \ \  + iAB(E_1H_1 - H_1E_1 + H_2E_2 - E_2 H_2) + 2A(\nabla \phi \cdot \nabla \partial_t \phi) -iB(2\nabla \partial_t \phi \cdot \nabla_H + L(\partial_t \phi)).
\end{align*}

Since 
\begin{equation*}
\partial_t \mathcal{S} = \partial_t ^2 \phi + 2A(\nabla \phi \cdot \nabla \partial_t \phi) -iB(2\nabla \partial_t \phi \cdot \nabla_H + L (\partial_t \phi)),
\end{equation*}
\begin{align}\label{commutator}
\partial_t \mathcal{S} + [\mathcal{S,A}] &=  \nonumber \partial_t ^2 \phi + 4A(\nabla \phi \cdot \nabla \partial_t \phi) -2iB(2\nabla \partial_t \phi \cdot \nabla_H + L (\partial_t \phi)) \\ & \nonumber \ \ \ + A^2(4\nabla \phi \cdot D^2\phi \nabla \phi - 4\nabla \cdot (D^2 \phi \nabla) - \Delta^2 \phi) + B^2(4\nabla \phi \cdot D^2_H \phi \nabla \phi -4 \nabla \cdot D^2_H \phi \nabla - L^2 \phi)\\ & \ \ \  + iAB([E_1,H_1] -[E_2,H_2]).
\end{align}

The commutator $[\mathcal{S,A}]$ differs from the elliptic case, since $E_j\neq H_j$ for $j=1,2.$ In particular, the cross terms between $Ej$ and $H_j$ that appears here, cancels out in the elliptic case. However, in the hyperbolic case we need to compute the commutators $[E_1,H_1]$ and $[E_2,H_2]$. We have that
\begin{align}
[E_1,H_1] f  & = \Delta(\nabla \phi \cdot \nabla_H \phi) f + 2 \nabla (\nabla \phi \cdot \nabla_H \phi) \cdot \nabla f - L(|\nabla \phi|^2) f - 2 \nabla_H(|\nabla \phi|^2)\cdot \nabla f,
\end{align}
\begin{align}
[H_2,E_2]f & = 2 \nabla_H \phi \cdot \nabla (\Delta \phi) f + 4\nabla_H \phi \cdot \nabla(\nabla \phi \cdot \nabla f) - 2\nabla \phi \cdot \nabla(L\phi) f - 4\nabla \phi \cdot \nabla(\nabla_H \phi \cdot \nabla f). 
\end{align}
Observe that both commutators depend on the mixed-partial derivatives of $\phi.$ For example,
\begin{align*}
\Delta(\nabla \phi \cdot \nabla_H \phi) -L(|\nabla \phi|^2)&= \nabla\cdot \nabla( \nabla \phi \cdot \nabla_H \phi) - \nabla \cdot \nabla_H(\nabla \phi \cdot \nabla \phi) \\ & =2 \nabla \cdot ( D^2 \phi \nabla_H \phi) - 2\nabla \cdot(D^2_H \phi \nabla_H \phi).
\end{align*}
If the mixed partial derivatives of $\phi$ are zero, then $D^2\phi = D^2_H\phi$, so that
\begin{align*}
    \Delta(\nabla \phi \cdot \nabla_H \phi)f -L(|\nabla \phi|^2)f &= 2\nabla \cdot (D^2\phi \nabla_H \phi) -2\nabla \cdot (D^2 \phi \nabla_H \phi) =0.
\end{align*}
Similarly, if all mixed partial derivatives are zero, then
\begin{align*}
    (2\nabla(\nabla \phi \cdot \nabla_H \phi) - 2\nabla_H(\nabla \phi \cdot \nabla \phi)) \cdot \nabla f &=0, \\
    (\nabla_H \phi \cdot(\nabla \phi)- \nabla \phi \cdot \nabla(L\phi))f &= 0, \\
    4\nabla_H \phi \cdot \nabla(\nabla \phi \cdot \nabla f) - 4\nabla \phi \cdot \nabla(\nabla_H \phi \cdot \nabla f) &= 0
\end{align*}
and in this case
\begin{align}\label{commutator special case}
\partial_t \mathcal{S} + [\mathcal{S,A}] &=  \nonumber \partial_t ^2 \phi + 4A(\nabla \phi \cdot \nabla \partial_t \phi) -2iB(2\nabla \partial_t \phi \cdot \nabla_H + L (\partial_t \phi)) \\ &  \ \ \ + (A^2+B^2)(4\nabla \phi \cdot D^2\phi \nabla \phi - 4\nabla \cdot (D^2 \phi \nabla)) -A^2 \Delta^2 \phi+ - B^2L^2 \phi.
\end{align}

If $\phi=\gamma|x|^2,$ then
$$\langle \partial_t \mathcal{S} f + [\mathcal{S,A}] f , f \rangle_{L^2(\mathbb{R}^n)} = \gamma(A^2+B^2)\int_{\mathbb{R}^n}32 \gamma^2 |x|^2 |f|^2 + 8|\nabla f|^2 \geq 0.$$
Since also
$$|\partial_t f - \mathcal{S}f-\mathcal{A}f|\leq |B| (\sup_{t\in [0,1]}\|V\|_{L^\infty(\mathbb{R}^n)}|f| + e^{\phi}|F|),$$
it follows formally by Lemma \ref{lemma2} that
\begin{align*}
   \|e^{\gamma |x|^2} u(t)\|_{L^2(\mathbb{R}^n)}\leq e^{N(B^2(M_1^2 +M_2^2) + |B|(M_1+M_2)) } \|e^{\gamma |x|^2}u_0\|_{L^2(\mathbb{R}^n)}^{1-t}\|e^{\gamma |x|^2}u(1)\|_{L^2(\mathbb{R}^n)}^t.
\end{align*}
However, in order to justify that $\|e^{\phi}u(t)\|_{L^2(\mathbb{R}^n)}$ is finite for all $t\in [0,1]$ we need to modify the weight, which makes the computations more involved. The argument follows the same spirit as in \cite{escauriaza_hardys_2008}. However, due to the presence of the commutators $[E_j,H_j]$ we work with a different weight function, chosen so that all mixed partial derivatives vanish, and therefore $[E_j,H_j]=0$ for this specific weight.

In particular, we start by defining a binary sequence $b=b_1b_2,\dots b_n$, $b_j\in\{0,1\}$, and let
\begin{align*}
R_b:=\Bigg\{x\in \mathbb{R}^n \ \Bigg|\begin{cases}
    |x_j| < 1 & if \ b_j=0\\
    |x_j|\geq 1 &  if \ b_j =1,
\end{cases} \Bigg\}
\end{align*}

\begin{align*}
\psi_a(x_j) :=
 \begin{cases}
x_j^2  & if \ |x_j|<1 \\
\frac{2|x_j|^{2-a}-a}{2-a} & if \ |x_j|\geq 1,
 \end{cases}
\end{align*}

\begin{align*}
\phi_a(x) := \sum_{j=1}^{n}\psi_a(x_j).
\end{align*}
Then $\phi_a$ is defined differently in the $2^n$ regions $R_b$  will be continuous and differentiable in all of $\mathbb{R}^n.$ Moreover,
\begin{align*}
    \nabla \phi_a =(\psi'(x_1),\dots,\psi'(x_n)), \quad 
    \Delta \phi_a =\sum_{j=1}^{n}\psi''_a(x_j), \quad
    L\phi_a = \sum_{j=1}^k\psi''(x_j) - \sum_{j=k+1}^{n} \psi''(x_j),
\end{align*}
where
\begin{align*}
    \psi_a'(x_j) =\begin{cases}
        2x_j & if \ |x_j|<1, \\
        2|x_j|^{1-a}sgn(x_j) & if \ |x_j|\geq 1,
    \end{cases} \quad \quad
    \psi_a''(x_j)=\begin{cases}
        2 & if \ |x_j|<1, \\
        2(1-a)|x_j|^{-a} & |x_j|\geq 1.
    \end{cases}
\end{align*}
Since neither $\Delta \phi_a$ or $L\phi_a$ is continuous at the boundaries of the regions $R_b$, we need to compute the third derivative in the distributional sense. It follows that the distribution
\begin{align*}
    \partial_{x_j} \Delta \phi_a & =  \psi_a(x_j)''' \\ & = 2a(\mu_{(x_j=-1)}-\mu_{(x_j=1)})-2a(a-1)|x_j|^{-a-1}sgn(x_j) \mathbbm{1}_{|x_j|\geq 1},
\end{align*} where
$\mu_{x_j=\pm 1}$ is the distribution defined by $$\langle \mu_{(x_j=\pm1)},g\rangle=\int_{\mathbb{R}^{n-1}} g(x_1,\dots,x_{j-1},\pm 1,x_{j+1},x_n)dx$$ for $g\in C^\infty_0(\mathbb{R}^n).$ Moreover, let $\theta_\rho\in C^\infty_0(\mathbb{R}^n)$ be a mollifier, and let $\phi_{a,\rho}=\gamma \phi_a*\theta_\rho.$ It follows that
$\Delta^2\phi_{a,\rho} = \gamma \sum_{j=1}^n \partial_j\Delta \phi_a*\partial_j \theta_{\rho},$ so that
\begin{align*}
\|\Delta^2\phi_{a,\rho}
\|_{L^\infty(\mathbb{R}^n)} &\leq \gamma \sum_{j=1}^n \|\partial_j \Delta \phi_a *\partial_j \theta_\rho\|_{L^\infty(\mathbb{R}^n)} \\ &\leq \gamma \sum_{j=1}^n\Big(2a(1-a)\||x_j|^{-a-1}\mathbbm{1}_{|x_j|\geq 1}*\partial_j\theta_\rho\|_{L^\infty(\mathbb{R}^n)}  \\ & \quad +  2a\|(\mu_{(x_j=-1)} - \mu_{(x_j=1)})*\partial_j\theta_\rho\|_{L^\infty(\mathbb{R}^n)}\Big).  \end{align*}
Since
$$\|(\mu_{(x_j=-1)} - \mu_{(x_j=1)})*\partial_j\theta_\rho\|_{L^\infty(\mathbb{R}^n)} \leq C(n,\rho,\theta)$$ and  $$\||x_j|^{-a-1}\mathbbm{1}_{|x_j|\geq 1}*\partial_j \theta_\rho\|_{L^\infty(\mathbb{R}^n)} \leq \|1*\partial_j \theta_\rho \|_{L^\infty(\mathbb{R}^n)}\leq C(n,\rho,\theta),$$
it follows that
\begin{equation}\label{Bound for D}
\|\Delta^2 \phi_{a,\rho}\|_{L^\infty(\mathbb{R}^n)} \leq C(n,\rho, \theta)a.
\end{equation}
A similar argument shows that
\begin{equation}\label{Bound for L}\|L^2 \phi_{a,\rho}\|_{L^\infty(\mathbb{R}^n)} \leq C(n,\rho, \theta)a. \end{equation}
Since the mixed partial derivatives are 0, $D^2\phi_{a,\rho} = D^2_{H} \phi_{a\rho}$, and 
(\ref{commutator}) now becomes 
\begin{align} \label{commutator for phi_a,rho}
\partial_t \mathcal{S}_{a,\rho} + [\mathcal{S}_{a,\rho},\mathcal{A}_{a,\rho}] & = 4(A^2+B^2)(\nabla_{\phi_{a,\rho}}\cdot D^2\phi_{a,\rho}\nabla \phi_{a,\rho} - \nabla \cdot D^2\phi_{a,\rho} \nabla) -A^2 \Delta^2 \phi_{a,\rho} -  B^2L^2 \phi_{a,\rho},
\end{align}
and so
 \begin{align} \label{St+SA}
    \langle (\partial_t \mathcal{S}_{a,\rho} + [\mathcal{S}_{a,\rho},\mathcal{A}_{a,\rho}] )f_{a,\rho},f_{a,\rho}\rangle&= \nonumber 4(A^2+B^2)\left( \langle D^2 \phi_{a,\rho} \nabla \phi_{a,\rho} \cdot \nabla \phi_{a,\rho} f_{a,\rho}, f_{a,\rho}\rangle - \langle \nabla \cdot (D^2 \phi_{a,\rho} \nabla f_{a,\rho}), f_{a,\rho} \rangle  \nonumber  \right)\\ & \quad  - A^2 \langle \Delta^2 \phi_{a,\rho} f_{a,\rho}, f_{a,\rho} \rangle -  B^2 \langle L^2 \phi_{a,\rho} f_{a,\rho}, f_{a,\rho} \rangle  \nonumber  \\ & = 4(A^2+B^2)\left( \langle \nabla \phi_{a,\rho}^T D^2 \phi_{a,\rho} \nabla \phi_{a\rho} f_{a,\rho}, f_{a,\rho}\rangle +\langle D^2\phi_{a,\rho} \nabla f_{a,\rho}, \nabla f_{a,\rho} \rangle \right) \nonumber \\ & \quad  -A^2 \langle \Delta^2 \phi_{a,\rho} f_{a,\rho}, f_{a,\rho} \rangle -  B^2\langle L^2 \phi_{a,\rho} f_{a,\rho}, f_{a,\rho} \rangle.
\end{align}
Since $D^2 \phi_a(x) = \text{diag}(\psi_a''(x_1),\dots,\psi_a''(x_n)),$ is positive definite, and by (\ref{Bound for D}) and (\ref{Bound for L}), it follows that
\begin{equation}
\partial_t \mathcal{S}_{a,\rho} + [\mathcal{S}_{a,\rho},\mathcal{A}_{a,\rho}]  \geq -C(\rho,\theta)a =-M_0(a,\rho), 
\end{equation}
where $M_0(a,\rho)\to 0$ when $a\to 0.$ Moreover, 
\begin{equation}\label{dtS SA}
    |(\partial_t -\mathcal{S}_{a,\rho} - \mathcal{A}_{a,\rho})f_{a,\rho}|  \leq |B| (M_1|f_{a,\rho}| + e^{\phi_{a,\rho}}F),
\end{equation}
 and by using that $\phi_{a,\rho} \leq \gamma|x|^2 + C(n)\rho^2, $ we have that \begin{equation}
     M_2(a,\rho):=\sup_{t\in [0,1]} \frac{\|e^{\phi_{a,\rho}}F\|_{L^2(\mathbb{R}^n)}}{\|f_{a,\rho}\|_{L^2(\mathbb{R}^n)}}\leq e^{C(n)\rho^2}M_2:=M_{2,\rho},\end{equation}
Applying Lemma \ref{lemma2}, we get that
$\|f_{a,\rho}\|_{L^2(\mathbb{R}^n)}$ is logarithmically convex for $t\in [0,1]$ and that
\begin{equation}
\|e^{\phi_{a,\rho}}u\|_{L^2(\mathbb{R}^n)} \leq e^{N(B^2(M_1^2 + M_{2,\rho}^2) + |B|(M_1 + M_{2,\rho} + M_0(a,\rho))} \|e^{\phi_{a,\rho}}u_0\|_{L^2(\mathbb{R}^n}^{(1-t)}\|e^{ \phi_{a,\rho}}\|_{L^2(\mathbb{R}^n)}^t.
\end{equation}
By first taking the limit as $a\to 0$, and then the limit as $\rho$ to $0$ we deduce that
\begin{align*}
\|e^{\gamma |x|^2}u(t)\|_{L^2(\mathbb{R}^n)}^2 \leq e^{N(B^2(M_1^2+M_2^2) + |B|(M_1+M_2)}  \|e^{\gamma |x|^2 }u_0\|_{L^2(\mathbb{R}^n)}^{2(1-t)}\|e^{\gamma |x|^2}u(1)\|_{L^2(\mathbb{R}^n)}^{2t}.
\end{align*}
\end{proof}
Now we prove a similar result for $\nabla u.$
\begin{lemma} \label{Lemma 4 paper}
Let $u\in C([0,1],L^2(\mathbb{R}^n)) \cap L^2([0,1], H^1(\mathbb{R}^n))$ be a solution to the equation
\begin{equation}
    \partial_t u = A \Delta u + iB(Lu + Vu + F(x,t)) \ \ \ \text{in }\mathbb{R}^n \times [0,1]
\end{equation}
for $A>0, B\in \mathbb{R}$, $V$ a complex-valued potential, $\gamma>0,$ and $\sup_{t\in[0,1]} \|V(t)\|_{L^\infty(\mathbb{R}^n)} \leq M_1.$
   Then for $0<t<1$
    \begin{align} \label{Bound lemma4}
    \nonumber \|\sqrt{t(1-t)}e^{\gamma|x|^2}\nabla & u\|_{L^2(\mathbb{R}^n\times [0,1])} +  \|\sqrt{t(1-t)}|x|e^{\gamma |x|^2}u\|_{L^2(\mathbb{R}^n\times [0,1])} \\ &\leq N(A,B, \gamma, M_1) [\sup_{t\in [0,1]} \|e^{\gamma |x|^2}u (t)\|_{L^2(\mathbb{R}^n)} + \sup_{t\in [0,1]} \|e^{\gamma |x|^2} F\|_{L^2(\mathbb{R}^n)}]
\end{align}
where $N(A,B,\gamma, M_1)$ remains finite when $A^2+B^2$ is bounded away from $0$. 
\end{lemma}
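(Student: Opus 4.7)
The plan is to apply the Carleman identity of Lemma \ref{lemma2} to the function $f = e^{\gamma|x|^2}u$ and then integrate the resulting pointwise-in-time inequality against the time cutoff $t(1-t)$. Since the weight $\phi(x) = \gamma|x|^2$ has no mixed partial derivatives, the simplified commutator formula \eqref{commutator special case} from the proof of Lemma \ref{Lemma3} applies directly. With $\partial_t\phi\equiv 0$, $\Delta^2\phi\equiv 0$ and $L^2\phi\equiv 0$, it reduces to
$$\partial_t\mathcal{S} + [\mathcal{S},\mathcal{A}] = (A^2+B^2)\bigl(32\gamma^3|x|^2 - 8\gamma\Delta\bigr),$$
so that for sufficiently decaying $f$,
$$\bigl\langle(\partial_t\mathcal{S} + [\mathcal{S},\mathcal{A}])f, f\bigr\rangle_{L^2} = 8\gamma(A^2+B^2)\|\nabla f\|_{L^2}^2 + 32\gamma^3(A^2+B^2)\||x|f\|_{L^2}^2.$$
Using identity \eqref{Lemma 2 statement 1} and the relation $\partial_t f - \mathcal{S}f - \mathcal{A}f = iBVf + iBe^{\gamma|x|^2}F$, I would discard the nonnegative square $\|(\partial_t + \mathcal{S} - \mathcal{A})f\|^2$ to reach a pointwise lower bound of the form $\partial_t^2 H \ge 16\gamma(A^2+B^2)\|\nabla f\|^2 + 64\gamma^3(A^2+B^2)\||x|f\|^2 - R(t)$, with $R(t)$ a remainder bounded by $|B|$, $M_1$, $\|f(t)\|_{L^2}$ and $\|e^{\gamma|x|^2}F(t)\|_{L^2}$, and $H(t) = \|f(t)\|^2_{L^2}$.

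I would then multiply by $t(1-t)$ and integrate over $[0,1]$. Two integrations by parts in time yield
$$\int_0^1 t(1-t)\,\partial_t^2 H(t)\,dt = H(0) + H(1) - 2\int_0^1 H(t)\,dt \leq 2\sup_{t\in[0,1]}\|e^{\gamma|x|^2}u(t)\|_{L^2}^2,$$
while a single IBP handles the cross term $2\int_0^1 t(1-t)\partial_t\mathrm{Re}\langle iBVf + iBe^{\gamma|x|^2}F,f\rangle\,dt$. After rearrangement and division by $2(A^2+B^2)\min(8\gamma,32\gamma^3)$ (legitimate since $A^2+B^2$ is bounded away from $0$), a square root yields
$$\|\sqrt{t(1-t)}\,\nabla f\|_{L^2(\mathbb{R}^n\times[0,1])} + \|\sqrt{t(1-t)}\,|x|f\|_{L^2(\mathbb{R}^n\times[0,1])} \leq N(A,B,\gamma,M_1)\Bigl(\sup_t\|e^{\gamma|x|^2}u\|_{L^2} + \sup_t\|e^{\gamma|x|^2}F\|_{L^2}\Bigr).$$
The pointwise bound $|e^{\gamma|x|^2}\nabla u| \le |\nabla f| + 2\gamma|x||f|$ together with $|x|e^{\gamma|x|^2}|u| = |x||f|$ then translates the estimate into the desired form \eqref{Bound lemma4}.

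For the rigorous justification I would mirror the strategy from the proof of Lemma \ref{Lemma3} and replace $\phi$ by the mollified weight $\phi_{a,\rho} = \gamma\phi_a*\theta_\rho$ introduced there. All mixed partials of $\phi_{a,\rho}$ vanish, so \eqref{commutator special case} and the resulting commutator identity \eqref{St+SA} remain valid; the error contributions $A^2\Delta^2\phi_{a,\rho}$ and $B^2 L^2\phi_{a,\rho}$ are $O(a)$ by \eqref{Bound for D}-\eqref{Bound for L} and drop out when $a\to 0$. The bound $\phi_{a,\rho}(x) \leq s(t)|x|^2 + C(n)\rho^2$ combined with the energy estimate of Lemma \ref{lemma 1 b)} guarantees that $e^{\phi_{a,\rho}}u$, $\nabla(e^{\phi_{a,\rho}}u)$ and $|x|e^{\phi_{a,\rho}}u$ all lie in $L^2(\mathbb{R}^n\times[0,T])$, which is what legitimizes every spatial integration by parts in the Carleman computation. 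Passing first to the limit $a\to 0$ (so that $D^2\phi_{a,\rho}\to 2\gamma I$) and then $\rho\to 0$ recovers the estimate with the original weight $e^{\gamma|x|^2}$ via Fatou's lemma on the left and dominated convergence on the right.

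The main obstacle I anticipate is the coercivity step along the approximation: the Hessian entry $\psi_a''(x_j)\simeq 2(1-a)|x_j|^{-a}$ degenerates for large $|x_j|$, so the quadratic form $\langle D^2\phi_{a,\rho}\nabla f_{a,\rho},\nabla f_{a,\rho}\rangle$ does not provide a uniform lower bound of the form $c\|\nabla f_{a,\rho}\|^2$ before passing to the limit. One must therefore carry the inequality in integrated form, exploit the smallness of $a$ in all error terms, and recover the full coercivity constant $2\gamma$ only after the limit is taken. Once this delicate step is in place, the remainder is routine bookkeeping following the argument of Lemma \ref{Lemma3}.
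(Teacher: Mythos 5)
Your proposal follows essentially the same route as the paper's proof of Lemma~\ref{Lemma 4 paper}: you apply the Carleman identity of Lemma~\ref{lemma2} to $f = e^{\phi}u$ with the formal weight $\phi = \gamma|x|^2$, multiply by $t(1-t)$, integrate over $[0,1]$ using two integrations by parts for the $\partial_t^2 H$ term and one for the cross term $2\partial_t \mathrm{Re}\langle\cdot,\cdot\rangle$, discard the non-negative square $\|\partial_t f - \mathcal{A}f + \mathcal{S}f\|^2$, and then carry out the rigorous justification with the mollified weight $\phi_{a,\rho}$, just as the paper does (including the correct observation that $D^2\phi_{a,\rho}$ loses uniform coercivity for $|x_j|\geq 1$, so one must add and subtract $2\gamma I$ and pass the errors to the limit $a\to 0$). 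The only cosmetic divergence is the final translation from $\nabla f$ to $e^{\gamma|x|^2}\nabla u$: you use the pointwise triangle inequality $|e^{\gamma|x|^2}\nabla u|\leq |\nabla f| + 2\gamma|x||f|$, whereas the paper performs the spatial integration-by-parts identity (\ref{EQ1})--(\ref{EQ2}); both are valid and yield the stated bound.
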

\begin{proof}
Let $f=e^{\phi}u.$ Assuming all computations in Lemma \ref{lemma2} are justified for $f$, we start by multiplying (\ref{Lemma 2 statement 1}) with $t(1-t)$ and integrate from 0 to 1. Integrating by parts two times, we get for the left-hand side of (\ref{Lemma 2 statement 1})
   \begin{align} \label{Ibp4.1}
    \int_{0}^{1} \partial_t^2 H(t)t(1-t) dt & \leq H(0) + H(1).
   \end{align}
For the first part of the right-hand side of (\ref{Lemma 2 statement 1}), we get
\begin{align} \label{Ibp4.2}
2\int_{0}^{1} t(1-t)  \partial_t Re \langle \partial_t f - \mathcal{S}f -\mathcal{A}f, f\rangle_{L^2(\mathbb{R}^n)} dt & = -2\int_{0}^{1} (1-2t)  Re \langle \partial_t f -\mathcal{S}f-\mathcal{A}f,f\rangle_{L^2(\mathbb{R}^n)} dt.
\end{align}
It then follows from (\ref{Lemma 2 statement 1}), (\ref{Ibp4.1}) and (\ref{Ibp4.2}) that
\begin{align}\label{h(0)h(1)}
 2\int_{0}^1 t(1-t) \langle \partial_t \mathcal{S} f + [\mathcal{S,A}]f,f &\rangle_{L^2(\mathbb{R}^n)} \nonumber  \\ & \leq H(1) + H(0) + 2\int_{0}^1 (1-2t) Re \langle \partial_t f -\mathcal{S}f-\mathcal{A}f,f\rangle_{L^2(\mathbb{R}^n)} dt \nonumber  \\ & \ \ + \int_{0}^1 t(1-t)  \|\partial_t f - \mathcal{A}f - \mathcal{S}f\|_{L^2(\mathbb{R}^n)}^2dt.
\end{align}  

Ideally, we want $\phi=\gamma |x|^2,$ but to be able to justify everything rigorously, we will work with $\phi_{a,\rho}=\gamma(\phi_a * \theta_\rho)$ where $\phi_a$ is defined just as in Lemma \ref{Lemma3}. Observe that by Lemma \ref{lemma 1 b)}, $\nabla f_{a,\rho}$ and $\nabla \phi_{a,\rho} f_{a,\rho} \in L^2(\mathbb{R}^n \times [0,T])$, and we are able to rigorously justify all computations, 

We start by bounding the left hand side of (\ref{h(0)h(1)}) from below.
Recall from (\ref{St+SA}) that
\begin{align} \label{commutator111}
\langle (\partial_t \mathcal{S}_{a,\rho } + [\mathcal{S,A}])f_{a,\rho},f_{a,\rho} \rangle & =4(A^2+B^2)\left( \langle \nabla \phi_{a,\rho}^T D^2 \phi_{a,\rho} \nabla \phi_{a\rho} f_{a,\rho}, f_{a,\rho}\rangle +\langle D^2\phi_{a,\rho} \nabla f_{a,\rho}, \nabla f_{a,\rho} \rangle \right) \nonumber \\ & \quad -A^2 \langle \Delta^2 \phi_{a,\rho} f_{a,\rho}, f_{a,\rho} \rangle -  B^2\langle L^2 \phi_{a,\rho} f_{a,\rho}, f_{a,\rho} \rangle.
\end{align} Observe that 
\begin{align*}
    \langle D^2\phi_{a,\rho} \nabla f_{a,\rho}, \nabla f_{a,\rho} \rangle_{L^2(\mathbb{R}^n)} 
     =\gamma \langle( D^2 \phi_a - 2I )*\theta_\rho \nabla f_{a,\rho}, \nabla f_{a,\rho} \rangle_{L^2(\mathbb{R}^n)}+ 2\gamma\|\nabla f_{a,\rho}\|_{L^2(\mathbb{R}^n)}^2,
\end{align*} 
$$D^2\phi_a-2I =\text{diag}(\psi_a''(x_1)-2,\dots,\psi_a''(x_n)-2),$$ and $$\psi_a''(x_j)-2=\begin{cases}
    0 & \text{if } |x_j|<1 \\ 2(1-a)|x_j|^{-a} -2 &\text{if }  |x_j|\geq 1,
\end{cases}$$
goes to $0$ pointwise as $a\to 0$. Similarly,
\begin{align*}
 \langle \nabla \phi_{a,\rho}^T D^2 \phi_{a,\rho} \nabla \phi_{a,\rho}f_{a,\rho}, f_{a,\rho} \rangle_{L^2(\mathbb{R}^n)} =\gamma\langle \nabla \phi_{a,\rho}^T (D^2 \phi_a - 2I)*\theta_\rho \nabla \phi_{a,\rho} f_{a,\rho},f_{a,\rho}\rangle_{L^2(\mathbb{R}^n)} + 2\gamma\|\nabla \phi_{a,\rho} f_{a,\rho} \|_{L^2(\mathbb{R}^n)} ^2,
\end{align*}
and since both $L^2\phi_{a,\rho}\leq C(n,\rho)a$ and $\Delta^2\phi_{a,\rho}\leq C(n,\rho)a,$ 
\begin{align*}
\langle (\partial_t \mathcal{S}_{a,\rho } + [\mathcal{S,A}])f_{a,\rho},f_{a,\rho} \rangle & \geq 4 (
A^2+B^2) \gamma \Big( \langle( D^2 \phi_a - 2I )*\theta_\rho \nabla f_{a,\rho}, \nabla f_{a,\rho} \rangle_{L^2(\mathbb{R}^n)}+ 2\|\nabla f_{a,\rho}\|_{L^2(\mathbb{R}^n)}^2 \\ & \quad + \langle \nabla \phi_{a,\rho}^T (D^2 \phi_a - 2I)*\theta_\rho \nabla \phi_{a,\rho} f_{a,\rho},f_{a,\rho}\rangle_{L^2(\mathbb{R}^n)} + 2\|\nabla \phi_{a,\rho} f_{a,\rho} \|_{L^2(\mathbb{R}^n)} ^2\Big) \\ & \quad - C(n,\rho)a\|f_{a,\rho}\|_{L^2(\mathbb{R}^n)}^2.
\end{align*}

Furthermore, since
$$\nabla f_{a,\rho} = e^{\phi_{a,\rho}}(\nabla \phi_{a,\rho} u + \nabla u), $$
\begin{align*}
\int_{\mathbb{R}^n} |\nabla f_{a,\rho}|^2 dx = & \int_{\mathbb{R}^n} e^{2\phi_{a,\rho}}(|\nabla \phi_{a,\rho}|^2 |u|^2 + |\nabla u|^2) dx + \int_{\mathbb{R}^n} e^{2\phi_{a,\rho}}( \nabla \phi_{a,\rho} u \cdot \nabla \bar{u}) dx \\ &  + \int_{\mathbb{R}^n} e^{2\phi_{a,\rho}}( \nabla  u \cdot \nabla \phi_{a,\rho} \bar{u}) dx.
\end{align*}
Integrating by parts shows that
\begin{align*}
\int_{\mathbb{R}^n} e^{2\phi_{a,\rho}} \nabla \phi_{a,\rho} u \cdot \nabla \bar{u} dx = & -\int_{\mathbb{R}^n} e^{2\phi_{a,\rho}} \nabla \phi_{a,\rho} \cdot \nabla u \bar{u} dx -\int_{\mathbb{R}^n} e^{2\phi_{a,\rho}} \nabla \cdot (\nabla \phi_{a,\rho}) |u|^2 dx  \\ & - \int_{\mathbb{R}^n} 2 |\nabla \phi_{a,\rho}|^2 e^{2\phi_{a,\rho}} |u|^2 dx,
\end{align*}
so that 
\begin{align}\label{EQ1}
\int_{\mathbb{R}^n} |\nabla f_{a,\rho}|^2 + |\nabla \phi_{a,\rho}|^2 |f_{a,\rho}|^2 dx  & = \int_{\mathbb{R}^n} e^{2\phi_{a,\rho}} |\nabla u|^2 dx -\int_{\mathbb{R}^n} |f_{a,\rho}|^2 \nabla \cdot (\nabla \phi_{a,\rho}) dx.
\end{align}
In addition, integration by parts, Cauchy-Schwarz and Young's inequalities again show that
\begin{align}\label{EQ2}\int_{\mathbb{R}^n} |f_{a,\rho}|^2 \nabla \cdot (\nabla \phi_{a,\rho})  dx \nonumber &\leq 2 \int_{\mathbb{R}^n} |\nabla f_{a,\rho}| |f_{a,\rho}| |\nabla \phi_{a,\rho}| dx \\ &\leq \int_{\mathbb{R}^n} |\nabla f_{a,\rho}|^2 dx + \int_{\mathbb{R}^n} |f_{a,\rho}|^2 |\nabla \phi_{a,\rho}|^2 dx.
\end{align}
Combining (\ref{EQ1}) and (\ref{EQ2}),
\begin{align*}
    2\int_{\mathbb{R}^n} |\nabla f_{a,\rho}|^2 + |f_{a,\rho}|^2|\nabla \phi_{a,\rho}|^2 dx \geq \int_{\mathbb{R}^n} e^{2\phi_{a,\rho}}|\nabla u|^2 dx,
\end{align*} 
and
\begin{align*}
    2\int_{\mathbb{R}^n} |\nabla f_{a,\rho}|^2 + |f_{a,\rho}|^2|\nabla \phi_{a,\rho}|^2 dx \geq \frac12\int_{\mathbb{R}^n} e^{2\phi_{a,\rho}}|\nabla u|^2 dx + \int_{\mathbb{R}^n}|f_{a,\rho}|^2|\nabla \phi_{a,\rho}|^2 dx.
\end{align*} 
Thus 
\begin{align*}
    &\int_{0}^{1} \langle \partial_t \mathcal{S}_{a,\rho} + [\mathcal{S}_{a,\rho},\mathcal{A}_{a,\rho}] f_{a,\rho}, f_{a,\rho}\rangle_{L^2(\mathbb{R}^n)} t(1-t) dt \\ & \geq 4 (A^2+B^2) \gamma \Big(  2\int_{0}^1 \int_{\mathbb{R}^n} |\nabla f_{a,\rho}|^2 + |\nabla \phi_{a,\rho}|^2|f_{a,\rho}|^2 t(1-t) dx dt  \\ & \ \ + \int_{0}^{1} \langle (D^2 \phi_{a} - 2I)*\theta_\rho  \nabla f_{a,\rho}, \nabla f_{a,\rho} \rangle_{L^2(\mathbb{R}^n)} +
\langle \nabla \phi_{a,\rho}^T ((D^2\phi_{a}-2I)*\theta_\rho)\nabla \phi_{a,\rho} f_{a,\rho}, f_{a,\rho} \rangle_{L^2(\mathbb{R}^n)} dt \Big)
\\ & \ \ -  \int_{0}^1 C(n,\rho)a \|f_{a,\rho}\|_{L^2(\mathbb{R}^n)}^2 dt  \\ &  \ \geq N(A^2+B^2) \gamma \Big(\int_0^1 \int_{\mathbb{R}^n} t(1-t) e^{2\phi_{a,\rho}}|\nabla u|^2 dx dt + \int_{0}^1 \int_{\mathbb{R}^n} |\nabla \phi_{a,\rho}|^2|f_{a,\rho}|^2 t(1-t) dx dt \\ &  \ \ + \int_{0}^{1} \langle (D^2 \phi_{a} - 2I)*\theta_\rho  \nabla f_{a,\rho}, \nabla f_{a,\rho} \rangle_{L^2(\mathbb{R}^n)}t(1-t)dt \\ & \ \  + \int_{0}^1 \langle\nabla \phi_{a,\rho}^T ((D^2 \phi_{a} - 2I)*\theta_\rho)\nabla \phi_{a,\rho}f_{a,\rho}, f_{a,\rho}\rangle_{L^2(\mathbb{R}^n)} t(1-t) dt\Big)\\ & \ \ - \int_{0}^1 C(n,\rho)a \|f_{a,\rho}\|_{L^2(\mathbb{R}^n)}^2 t(1-t)dt.
\end{align*}
Going back to \eqref{h(0)h(1)}, we deduce that
\begin{align*}
    & (A^2+B^2)\gamma \left(\int_{0}^{1} t(1-t) \int_{\mathbb{R}^n} e^{2\phi_{a,\rho}} |\nabla u|^2 dx dt + \int_{0}^{1}t(1-t) \int_{\mathbb{R}^n} e^{2\phi_{a,\rho}} |u|^2 |\nabla \phi_{a,\rho}|^2 dx dt \right) \\ & \leq N\Big( H(1) + H(0) + 2\int_{0}^1 (1-2t) Re \langle \partial_t f_{a,\rho} -\mathcal{S}_{a,\rho}f_{a,\rho}-\mathcal{A}_{a,\rho}f_{a,\rho},f_{a,\rho}\rangle_{L^2(\mathbb{R}^n)} dt \nonumber  \\ & \ \ + \int_{0}^1 t(1-t)  \|\partial_t f_{a,\rho} - \mathcal{A}_{a,\rho}f_{a,\rho} - \mathcal{S}_{a,\rho}f_{a,\rho}\|_{L^2(\mathbb{R}^n)}^2dt  \\ &  \ \ -  \int_{0}^{1} \langle (D^2 \phi_{a} - 2I)*\theta_\rho  \nabla f_{a,\rho}, \nabla f_{a,\rho} \rangle_{L^2(\mathbb{R}^n)} t(1-t) dt  \\ & \ \ - \int_{0}^1
\langle \nabla \phi_{a,\rho}^T ((D^2 \phi_{a} - 2I)*\theta_\rho)\nabla \phi_{a,\rho}f_{a,\rho}, f_{a,\rho}\rangle_{L^2(\mathbb{R}^n)} t(1-t) dt \\ & \ \ + C(n,\rho)a  \int_{0}^1 \|f_{a,\rho}\|_{L^2(\mathbb{R}^n)}^2 dt \Big),
\end{align*}
for some $N>0.$ Moreover, using using \eqref{dtS SA}, 
\begin{align*}
    \int_{0}^{t}t(1-t)\|\partial_tf_{a,\rho}-\mathcal{S}_{a,\rho} f_{a,\rho}-\mathcal{A}_{a,\rho}f_{a,\rho}\|_{L^2(\mathbb{R}^n)}^2dt \leq N(B,M_1)\sup_{t\in[0,1]} \|f_{a,\rho}\|_{L^2(\mathbb{R}^n)}^2 + \|e^{\phi_{a\,\rho}} F\|_{L^2(\mathbb{R}^n)}^2 ),
\end{align*}

\begin{align*}
    \int_{0}^1 (1-2t) Re \langle \partial_t f_{a,\rho} -\mathcal{S}_{a,\rho}f_{a,\rho}-\mathcal{A}_{a,\rho}f_{a,\rho},f_{a,\rho}\rangle_{L^2(\mathbb{R}^n)} dt & \leq N(B,M_1)\sup_{t\in [0,1]}\left(\|f_{a,\rho}\|^2 + \|e^{ \phi_{a,\rho}}F\|_{L^2(\mathbb{R}^n)}^2\right),
\end{align*}
and since 
$$H(0)+H(1)\leq 2 \sup_{t\in [0,1]}\|e^{ \phi_{a,\rho}}u(t)\|_{L^2(\mathbb{R}^n)}^2,$$
it follows that
\begin{align}\label{limit}
   & \|\sqrt{t(1-t)}e^{\phi_{a,\rho}}\nabla u\|_{L^2(\mathbb{R}^n\times [0,1])}^2 + \|\sqrt{t(1-t)}e^{\phi_{a,\rho}}u\nabla \phi_{a,\rho}\|_{L^2(\mathbb{R}^n \times [0,1])}^2 \nonumber\\ & \leq N(A,B,\gamma, M_1) \Big(\sup_{t\in [0,1]}\|e^{\phi_{a,\rho}}u\|_{L^2(\mathbb{R}^n)} ^2+\sup_{t\in[0,1]}\|e^{\phi_{a,\rho}}F\|_{L^2(\mathbb{R}^n)}^2 \nonumber\\ & \ \ - \int_{0}^{1} \langle (D^2\phi_{a}-2I)*\theta_\rho  \nabla f_{a,\rho}, \nabla f_{a,\rho} \rangle_{L^2(\mathbb{R}^n)} +
\langle \nabla\phi_{a,\rho}^T ((D^2\phi_{a}-2I)*\theta_\rho)\nabla \phi_{a,\rho}f_{a,\rho}, f_{a,\rho}\rangle_{L^2(\mathbb{R}^n)} dt \nonumber \\ & \ \ + C(n,\rho)a  \int_{0}^1 \|f_{a,\rho}\|_{L^2(\mathbb{R}^n)}^2 dt\Big),
\end{align}
and $N(A, B,\gamma,M_1)$ remains bounded when $A^2+B^2$ is bounded away from $0$. The result follows by first letting $a\to 0$ and then $\rho\to 0,$ which also can be rigorously justified by using Lemma \ref{lemma 1 b)}.
\end{proof}

\subsection{\texorpdfstring{$L^2$-weighted bound under a smallness assumption on the potential}{L2-weighted bound under a smallness assumption}}

\begin{lemma}\label{general KPV lemma}
There exists $\epsilon_0>0$ such that if $V:\mathbb{R}^n\times[0,1]\to \mathbb{C}$ satisfies \begin{equation} \label{Vsmall}
    \|V\|_{L^1_t,L^\infty_X}\leq \epsilon _0 
\end{equation}
and $u\in C([0,1],L^2(\mathbb{R}^n))$ is a solution of the IVP
\begin{align}\label{HSVF}
  \begin{cases}
      i\partial_t u+ Lu = Vu +F\\
      u(x,0)=u_0,
      \end{cases}
\end{align}
$F\in L^1([0,1],L^2(\mathbb{R}^n))$ and for some $\beta\in \mathbb{R}$ and some $1\leq j\leq n$ $\|e^{\beta x_j}u_0\|_{L^2(\mathbb{R}^n)}<\infty$, $\|e^{\beta x_j}u(1)\|_{L^2(\mathbb{R}^n)}<\infty$ and $F\in L^1([0,1],L^2(e^{2\beta x_j} dx)),$ then for some $N>0$ independent of $\beta$

\begin{align*}
    \sup_{t\in [0,1]}\|e^{\beta x_j}u(t)\|_{L^2(\mathbb{R}^n)} \leq N(\|e^{\beta x_j}u_0\|_{L^2(\mathbb{R}^n)} + \|e^{\beta x_j}u(1)\|_{L^2(\mathbb{R}^n)} + \|F\|_{L^1([0,1],L^2(\mathbb{R}^n,e^{2\beta x_j}dx))}).
\end{align*}
\end{lemma}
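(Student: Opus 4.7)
The plan is to Fourier transform the conjugated equation for $v = e^{\beta x_j} u$ and close the estimate by combining Duhamel representations from both endpoints $t = 0$ and $t = 1$. A direct computation gives
\begin{equation*}
i\partial_t v + L v - 2\sigma_j \beta \partial_{x_j} v + \sigma_j \beta^2 v = Vv + F_\beta,
\end{equation*}
where $\sigma_j = +1$ if $1 \le j \le k$, $\sigma_j = -1$ if $k+1 \le j \le n$, and $F_\beta = e^{\beta x_j} F$. Taking the spatial Fourier transform, $\hat v(\xi, t)$ satisfies a $\xi$-parametrized ODE in $t$ with multiplier $\mu(\xi) = -i(|\xi_+|^2 - |\xi_-|^2) + 2\sigma_j \beta \xi_j + i \sigma_j \beta^2$. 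The crucial feature is that $\operatorname{Re}\mu(\xi) = 2\sigma_j \beta \xi_j$, while the rest is purely imaginary and therefore contributes only phase.

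The idea is to split $\mathbb{R}^n$ in the frequency variable $\xi$ into the two half-spaces determined by the sign of $\sigma_j \beta \xi_j$ and, on each, use Duhamel starting from whichever endpoint keeps the integrating factor $e^{\mu\tau}$ controlled: on $\{\sigma_j \beta \xi_j \ge 0\}$ we run Duhamel backward from $t=1$, while on $\{\sigma_j \beta \xi_j < 0\}$ we run Duhamel forward from $t=0$. In either case every factor of the form $e^{\mu\tau}$ that appears has $\sigma_j\beta\xi_j \cdot \tau \le 0$ and hence $|e^{\mu\tau}| = e^{2\sigma_j\beta\xi_j\tau} \le 1$, yielding for every $\xi$ and every $t \in [0,1]$ the pointwise bound
\begin{equation*}
|\hat v(\xi, t)| \le |\hat v(\xi, 0)| + |\hat v(\xi, 1)| + \int_0^1 \bigl(|\widehat{Vv}|(\xi, s) + |\widehat{F_\beta}|(\xi, s)\bigr) \, ds.
\end{equation*}

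Taking the $L^2$-norm in $\xi$ via Minkowski and applying Plancherel together with the pointwise bound $\|V v(s)\|_{L^2} \le \|V(s)\|_{L^\infty} \|v(s)\|_{L^2}$ yields
\begin{equation*}
\sup_{t \in [0,1]} \|v(t)\|_{L^2} \le \|v(0)\|_{L^2} + \|v(1)\|_{L^2} + \|V\|_{L^1_t L^\infty_x} \sup_{t \in [0,1]} \|v(t)\|_{L^2} + \|F\|_{L^1_t L^2(e^{2\beta x_j} dx)}.
\end{equation*}
Choosing $\epsilon_0 \le 1/2$ allows us to absorb the potential term, delivering the desired estimate with $N$ independent of $\beta$.

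The cases $1 \le j \le k$ and $k+1 \le j \le n$ differ only in the sign $\sigma_j$ in $\mu$, which simply mirrors the Fourier-side half-space used in the dichotomy; this explains the authors' remark that the former case follows the elliptic proof of \cite{kenig_unique_2003} verbatim, with the hyperbolic sign in $L$ being harmless precisely because the splitting is aligned with $\sigma_j \beta \xi_j$. The main obstacle is to rigorously justify that $\|v(t)\|_{L^2}$ is finite for intermediate $t$, so that the Fourier representation is meaningful. This is handled by the standard approximation procedure: first prove the estimate for a parabolic regularization $u_\delta$ of $u$ in the spirit of Lemma \ref{lemma 1 b)}, where the weighted $L^2$-norms are automatically finite, then pass to the limit $\delta \to 0$ using that all constants are uniform in $\delta$ and $\beta$.
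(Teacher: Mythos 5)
Your formal computation is correct, and it isolates exactly the right structural idea: after conjugation, the Fourier multiplier has real part $2\sigma_j\beta\xi_j$, which has a definite sign on each half-space $\{\pm\sigma_j\beta\xi_j\ge 0\}$, and choosing the Duhamel endpoint accordingly keeps the integrating factor bounded by $1$. This is algebraically the same one-sidedness the paper exploits through the projections $P_\pm$ (with the roles of $P_+$ and $P_-$ swapped when $j>k$, as the paper notes). However, the substance of this lemma lies entirely in the rigorization, and there your proposal has a genuine gap.

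The claim that a parabolic regularization $u_\delta$ ``in the spirit of Lemma \ref{lemma 1 b)}'' automatically yields finite weighted norms $\|e^{\beta x_j}u_\delta(t)\|_{L^2}$ is not correct. Lemma \ref{lemma 1 b)} propagates a decreasing \emph{Gaussian} weight $e^{s(t)|x|^2}$ forward from $t=0$; its driving inequality $(2B^2/A+2A)|\nabla\phi|^2+\partial_t\phi\le 0$ cannot hold for a linear weight $\phi=s(t)x_j$ uniformly in $x_j\in\mathbb{R}$ unless $s\equiv 0$, and heat-semigroup smoothing $e^{s\Delta}$ does not improve spatial decay, so $\|e^{\beta x_j}e^{s\Delta}u(t)\|_{L^2}$ is controlled only by $\|e^{\beta x_j}u(t)\|_{L^2}$ itself --- the very quantity whose interior-time finiteness is unknown. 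The paper does not use parabolic regularization here at all. Instead, following Kenig--Ponce--Vega, it replaces the unbounded weight by bounded truncations $\psi_n(x_j)\nearrow e^{2\beta x_j}$, so that $v_n=\psi_n u$ is manifestly in $L^2$ for every $t$; this introduces a term $4\beta^2\phi_n^4 v_n$ that is \emph{not} $O(1/n)$ and requires a further conjugation $w_n=e^{\mu}v_n$ to remove; the resulting equation for $w_n$ has variable coefficients $h$, $a^2$, $b$, so the constant-coefficient Fourier ODE picture you rely on no longer applies and must be replaced by an energy identity for $\|P_\epsilon P_\pm w_n\|_{L^2}^2$, with Calder\'on commutator estimates to move $a$ past $P_\pm$ and the frequency cutoff $P_\epsilon$ to justify differentiation in time; the $O(1/n)$ errors are absorbed in the limit $n\to\infty$. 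Your proposal supplies no working substitute for this mechanism, so the rigorization step --- which is the whole point of the lemma --- remains unproved.
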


\begin{remark}
The argument follows as in the proof in \cite{kenig_unique_2003}, but we will show the proof in the case of the operator $L$ and for $x_j$ when $k< j\leq n$.
\end{remark}

After the proof of this lemma, we will extend this to show that knowing the decay in each coordinate direction actually implies the same decay bound for  
$
\|e^{\lambda \cdot x}u\|_{L^2(\mathbb{R}^n)}, \lambda \in \mathbb{R}^n.
$
In the elliptic case, since the Laplacian $\Delta$ is invariant under the orthogonal group $O(n)$, it suffices to prove the estimate in just one coordinate direction $x_j$.  
In contrast, for the hyperbolic case we need to have decay estimates for both $1 \leq j \leq k$ and $k < j \leq n$, because $L$ is invariant under the indefinite orthogonal group, (see Subsection \ref{pseudo group}, $O(k,n-k)$.  

\begin{proof}[Proof of Lemma \ref{general KPV lemma}]
Let us suppose $\beta>0$ and define a bump function $\phi_n \in C^\infty_0(\mathbb{R})$ such that $0\leq \phi \leq 1$ and 
\begin{align*}
    \phi_n(s) = \begin{cases}
        1 & s\leq n, \\
        0 & s\geq 10 n,
    \end{cases} \quad \quad
    |\phi_n^{(j)} (s)|\leq \frac{c_j}{n^j}.
\end{align*}
Define also
$$\theta_n(s)  = \beta \int_0^s \phi_n ^2(t) dt.$$ It follows that $\theta_n$ is a smooth non-decreasing function, 
$\theta_n(s) = \beta s$ for $s<n,$ $\theta_n(s) = c_n\beta$ for $s>10n$ and 
$$\theta_n'(s) = \beta \phi_n^2(s) \leq \beta, \quad |\theta_n ^j (s)|\leq \frac{\beta c_j}{n^{j-1}}.$$ Let 
$\psi_n(s) =  e^{2\theta_n(s)}.$ Then $\psi_n(s) \leq e^{2\beta s}$ and $\psi_n(s)\nearrow e^{2\beta s}$ when $n\to \infty.$ Fix $j\in[k+1,n]$ and define $v_n(x,t) = \psi_n(x_j)u(x,t).$ Then

\begin{align*}
    \partial_t v_n & = \psi_n \partial_t u, \\
    \partial x_j v_n  & = 2\beta \phi_n ^2 v_n + \psi_n\partial_{x_j}u \\ 
    L v_n &=\psi_nLu -(4\beta\phi_n'\phi_n-4\beta^2\phi_n^4)v_n -4\beta \phi_n^2\partial_{x_j} v_n.
\end{align*}
Thus 
\begin{align*}
    i\partial_t v_n + Lv_n &= i\psi_n \partial_t u + Lv_n \\ & = Vv_n + \psi_n F -(4\beta \phi_n'\phi_n-4\beta^2\phi_n^4)v_n -4\beta\phi_n^2\partial_{x_j}v_n.
\end{align*}

Since the term $4\beta^2 \phi_n^4 v_n,$ will not be of order $O(1/n)$, we need to get rid of this term. We consider instead the function $w_n(x,t) = e^{\mu(x,t)}v_n(x,t)$ where $\mu(x,t) = +4i\beta^2\phi_n^4(x_1)t.$ 

By a straight forward computation, see also \cite{kenig_unique_2003}, of the derivatives we get that

\begin{align*}
    \partial_t w_n &= e^{\mu}\partial_t v_n + i4\beta^2 \phi_n^4 w_n \\
    \partial_{x_j} w_n & = 16 i\beta^2 \phi_n^3 \phi_n't w_n + e^{\mu}\partial_{x_j}v_n \\
    \partial_{x_j}^2 w_n & = 32i\beta^2 \phi_n^3 \phi_n' t \partial_{x_j} w_n + w_n(48 i \beta^2 \phi_n^2 (\phi_n')^2t + 16i\beta^2 \phi_n^3\phi_n''t + (16i\beta^2\phi_n^3\phi_n't)^2) +e^{\mu} \partial_{x_j}^2v_n,
\end{align*}
and that $w_n$ satisfies the equation
\begin{equation}\label{eq w}
i\partial_t w_n +Lw_n = Vw_n + \tilde{F}_n + h(x,t)w_n -a^2(x_j)\partial_{x_j}w_n + itb(x_j)\partial_{x_j}w_n, 
\end{equation}
where 

\begin{align*}
h(x,t) =& -4\beta \phi_n \phi_n ' - 64\beta^2\phi_n^5 \phi_n't -48i\beta^2\phi_n^2(\phi_n ')^2t - (16i\beta^2\phi_n^3\phi_n''t)^2 \\
a^2(x) &= 4\beta \phi_n^2(x_j), \\
b(x) &= - 32\beta^2\phi_n^3(x_j) \phi_n'(x_j) \\
\tilde{F}_n(x,t)&=e^{\mu(x,t)}\phi_n(x_
j)F(x,t).
\end{align*} 
Compared to the original proof in \cite{kenig_unique_2003}, the functions differ only by an overall sign from the corresponding ones in \cite{kenig_unique_2003}. For most of the argument this sign difference will not matter, since we work with the absolute value. However, the sign in front of $a^2$ in $\eqref{eq w}$ does play a role in the proof below. Note that it can be adapted by changing the role of $P_+$ and $P_-$ in the original proof.

Observe first that
\begin{equation}
\|\partial_{x_j}^lh\|_{L^\infty(\mathbb{R}^n \times [0,1])}\leq \frac{c_l}{n^{l+1}},
\end{equation}

\begin{equation} \|\partial^l_{x_j}a^2(x_j)\|_{L^\infty(\mathbb{R}^n)}\leq \frac{c_l}{n^l},
\end{equation}

\begin{equation}
\|\partial_{x_j}^lb(x_j)\|_{L^\infty(\mathbb{R}^n)}\leq \frac{c^l}{n^{l+1}}.
\end{equation} Now let us define 
$\eta\in C^\infty(\mathbb{R}^n)$ such that $0\leq \eta\leq 1$,
$$\eta(x) =\begin{cases}
  1 & |x|\leq 1/2 \\
  0 & |x|\geq 1,
\end{cases}
$$ 
and $$\chi_{\pm}(\xi_1) = \begin{cases}
    1 & \xi_1 >0 \ (\xi_1 <0) \\
    0 & \xi_1 <0 \ (\xi_1 >0).    
\end{cases}$$
Then we define, for $\epsilon \in (0,1)$, the Fourier multipliers
\begin{align*}
   \widehat{ P_\epsilon f} (\xi) &=\eta(\epsilon \xi)\hat{f}(\xi), \\ 
   \widehat{P_{\pm} f}(\xi) &= \chi_{\pm}(\xi_1)\hat{f}(\xi).
\end{align*} We derive the equation for $P_\epsilon P_+w_n,$
\begin{equation*}
i \partial_tP_\epsilon P_+ w_n +LP_\epsilon P_+w_n = P_\epsilon P_+ (Vw_n) + P_\epsilon P_+ (\tilde{F}_n) +   P_\epsilon P_+ ( hw_n) - P_\epsilon P_+ (a^2\partial_{x_j}w_n) +  P_\epsilon P_+ (itb\partial_{x_j}w_n).  
\end{equation*} Similarly, we get an equation for $\overline{P_\epsilon P_+ w_n},$ and it follows that
\begin{align*}
    \partial_t |P_\epsilon P_+ w_n|^2 + 2Im(L(P_\epsilon P_+ w_n) \overline{P_\epsilon P_+ w_n}) &= 2Im(P_\epsilon P_+(Vw_n)\overline{P_\epsilon P_+ w_n}) + 2Im(P_\epsilon P_+ (\tilde{F}_n)\overline{P_\epsilon P_+ w_n}) + \\& \ \ \ + 2Im(P_\epsilon P_+(hw_n)\overline{P_\epsilon P_+ w_n})  - 2Im(P_\epsilon P_+(a^2\partial_{x_j} w_n)\overline{P_\epsilon P_+ w_n}) \\ & \ \ \ +2tRe(P_\epsilon P_+(b\partial_{x_j}w_n)\overline{P_\epsilon P_+ w_n}). \end{align*} Since $w_n, \tilde{F}_n\in L^2$ for a.e. $t$
in $[0,1],$ we can integrate each term. 
Integration by parts yields
\begin{equation*}
    Im \int_{\mathbb{R}^n} L(P_\epsilon P_+ w_n) \overline{P_\epsilon P_+ w_n}) =0.
\end{equation*}
so that
\begin{align*}
    \partial_t \int_{\mathbb{R}^n} |P_\epsilon P_+ w_n|^2 dx & =\int_{\mathbb{R}^n} 2Im(P_\epsilon P_+(Vw_n)\overline{P_\epsilon P_+ w_n}) dx + \int_{\mathbb{R}^n}2Im(P_\epsilon P_+ (\tilde{F}_n)\overline{P_\epsilon P_+ w_n}) dx + \\& \ \ \ + \int_{\mathbb{R}^n}2Im(P_\epsilon P_+(hw_n)\overline{P_\epsilon P_+ w_n}) dx - \int_{\mathbb{R}^n}2Im(P_\epsilon P_+(a^2\partial_{x_j} w_n)\overline{P_\epsilon P_+ w_n}) dx \\ & \ \ \ +\int_{\mathbb{R}^n}2tRe(P_\epsilon P_+(b\partial_{x_j}w_n)\overline{P_\epsilon P_+ w_n}) dx \\ & = (1) + (2) + (3) +(4) + (5).
\end{align*}

First observe that
\begin{align*}
    |(1)| &\leq c \|V\|_{L^\infty(\mathbb{R}^n)}\|w_n\|_{L^2(\mathbb{R}^n)}^2, \\ |(2)| &\leq c\|F_n\|_{L^2(\mathbb{R}^n)}\|w_n\|_{L^2(\mathbb{R}^n)} \\ 
    |(3)| &\leq c \|h\|_{L^\infty(\mathbb{R}^n)}\|w_n\|_{L^2(\mathbb{R}^n)}^2 \leq \frac{c}{n} \|w_n\|_{L^2(\mathbb{R}^n)}^2.
\end{align*}
For the last two terms, we will use Calderón's commutator estimates \cite{calderon_commutators_1965}

\begin{align}\label{Calderon}
\|[P_{\pm},a]\partial_{x_j}f\|_{L^2(\mathbb{R}^n)}\leq c \|\partial_{x_j} a\|_{L^\infty(\mathbb{R}^n)}\|f\|_{L^2(\mathbb{R}^n)} 
\end{align}

\begin{align}\label{Calderon2}
\|\partial_{x_j}[P_{\pm},a]f\|_{L^2(\mathbb{R}^n)}\leq c \|\partial_{x_j} a\|_{L^\infty(\mathbb{R}^n)}\|f\|_{L^2(\mathbb{R}^n)}. 
\end{align} Since $P_\epsilon\in C^\infty_0$ is a nice Fourier multiplier, we also have the commutator estimates
\begin{align} \label{Calderon3}
    \|[P_\epsilon,a]\partial_{x_j}f\|_{L^2(\mathbb{R}^n)}\leq \frac cn \|f\|_{L^2(\mathbb{R}^n)}
\end{align}

\begin{align} \label{Calderon4}
    \|\partial_{x_j}[P_\epsilon,a]f\|_{L^2(\mathbb{R}^n)}\leq \frac cn \|f\|_{L^2(\mathbb{R}^n)}.
\end{align}

\begin{claim}
    \begin{align}
        -Im\int_{\mathbb{R}^n}P_\epsilon P_+(a^2(x_j)\partial_{x_j} w_n) \overline{P_\epsilon P_+ w_n}dx \leq O\left(\frac{\|w_n\|_{L^2(\mathbb{R}^n)}}{n}\right).
    \end{align}
\end{claim}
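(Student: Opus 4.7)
The strategy is a two-step reduction: first use Calderón-type commutator estimates to pull $a^2$ outside the Fourier projectors $P_\epsilon P_+$, picking up only $O(\|w_n\|_{L^2}^2/n)$ errors; then exploit the frequency localization $\operatorname{supp}\hat{g}\subset\{\xi_j>0\}$ of $g:=P_\epsilon P_+w_n$ together with the positivity $a^2=4\beta\phi_n^2\ge 0$ to obtain the required one-sided bound.

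For the commutator reduction I would decompose
$$P_\epsilon P_+(a^2\partial_{x_j}w_n)=a^2\partial_{x_j}g+P_\epsilon[P_+,a^2]\partial_{x_j}w_n+[P_\epsilon,a^2]\partial_{x_j}P_+w_n,$$
and apply \eqref{Calderon} and \eqref{Calderon3} with $a$ replaced by $a^2$: since $\|\partial_{x_j}(a^2)\|_{L^\infty}=\|8\beta\phi_n\phi_n'\|_{L^\infty}\le C/n$, each commutator is bounded in $L^2$ by $(C/n)\|w_n\|_{L^2}$, and Cauchy--Schwarz against $\bar g$ yields contributions of the desired size. For the main term I would use $\partial_{x_j}g=i|\partial_{x_j}|g$ on the range of $P_+$, giving
$$-\operatorname{Im}\int a^2\partial_{x_j}g\,\bar g\,dx=-\operatorname{Re}\int a^2|\partial_{x_j}|g\,\bar g\,dx,$$
and then symmetrize via $|\partial_{x_j}|=|\partial_{x_j}|^{1/2}|\partial_{x_j}|^{1/2}$ together with self-adjointness of $|\partial_{x_j}|^{1/2}$ to obtain
$$\int a^2|\partial_{x_j}|g\,\bar g\,dx=\int a^2\bigl||\partial_{x_j}|^{1/2}g\bigr|^2\,dx+\bigl\langle[a^2,|\partial_{x_j}|^{1/2}]|\partial_{x_j}|^{1/2}g,\,g\bigr\rangle.$$
The first term is non-negative because $a^2\ge 0$, so discarding it only improves the upper bound on $-\operatorname{Re}(\cdot)$; what remains is the commutator term, which is bounded in $L^2$ by $C\|\partial_{x_j}(a^2)\|_{L^\infty}\le C/n$ in the spirit of the Calderón family \eqref{Calderon}--\eqref{Calderon4}.

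The main obstacle is the $L^2\to L^2$ boundedness of $[a^2,|\partial_{x_j}|^{1/2}]|\partial_{x_j}|^{1/2}$ by a constant times $\|\partial_{x_j}(a^2)\|_{L^\infty}$: this operator sits just outside the integer-order Calderón family stated in the paper. Symbol calculus shows that its principal symbol is $\tfrac{i}{2}\operatorname{sgn}(\xi_j)\partial_{x_j}(a^2)$, of order zero with the right norm, but to stay entirely within the cited estimates I would instead expand $P_+=\tfrac12(I+iH_j)$ with $H_j$ the $x_j$-Hilbert transform, and reduce every bilinear pairing arising after the first step to the integer-order Calderón commutator $[H_j,a^2]\partial_{x_j}$ that is directly controlled by \eqref{Calderon}, so that no fractional-order calculus is needed.
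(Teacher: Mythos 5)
Your positivity mechanism, combining $a^2\ge 0$ with the one-sided frequency localization of $P_\epsilon P_+$, is the correct one and is exactly what the paper exploits; the commutator-reduction idea is also on the right track. But the execution does not close within the estimates cited in the paper. As you acknowledge, the fractional commutator $[a^2,|\partial_{x_j}|^{1/2}]\,|\partial_{x_j}|^{1/2}$ is outside the Calder\'on family \eqref{Calderon}--\eqref{Calderon4}. The Hilbert-transform workaround you sketch is not spelled out and, as far as I can see, does not resolve this: on the range of $P_+$ the Hilbert transform $H_j$ acts as $-i$ times the identity, so moving $H_j$ across $a^2$ in $\int a^2\partial_{x_j}g\,\bar g$ only reproduces the starting expression up to commutators (a tautology, not a signed term), while expanding $P_+ = \tfrac12(I+iH_j)$ outright leaves the unsigned quantity $\operatorname{Im}\int a^2\partial_{x_j}(P_\epsilon w_n)\,\overline{P_\epsilon w_n}\,dx$, since $P_\epsilon w_n$ has no one-sided Fourier support. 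The missing ingredient is a way to split $a^2$ into $a\cdot a$ so that one factor lands on each side of the pairing.

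The paper does precisely that, before introducing any Fourier multiplier. It writes $a^2\partial_{x_j}w_n = a\,\partial_{x_j}(aw_n) - a(\partial_{x_j}a)w_n$, the second term contributing $O(\|w_n\|_{L^2}^2/n)$ directly since $\|a\,\partial_{x_j}a\|_{L^\infty}=O(1/n)$. The single factor $a$ (not $a^2$) is then pulled across $P_\epsilon P_+$ using \eqref{Calderon} and \eqref{Calderon3}; an integration by parts in $x_j$ (using that $a$ is real and that $\partial_{x_j}$ commutes with $P_\epsilon P_+$) transfers the derivative; and \eqref{Calderon2}, \eqref{Calderon4} pull $a$ back in. This yields the symmetric pairing $\int\partial_{x_j}P_\epsilon P_+(aw_n)\,\overline{P_\epsilon P_+(aw_n)}\,dx + O(\|w_n\|_{L^2}^2/n)$, whose imaginary part equals $\int_{\xi_j\ge 0}\xi_j\,|\widehat{P_\epsilon P_+(aw_n)}|^2\,d\xi\ge 0$; this is where the sign of $a^2$ and the $P_+$-localization jointly enter, and only integer-order Calder\'on estimates are used. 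If you prefer to retain your reduction order (commute $a^2$ out first, then symmetrize), the gap can be closed with the same idea: symmetrize with $a$ rather than $|\partial_{x_j}|^{1/2}$, writing $\langle a^2|\partial_{x_j}|g,g\rangle = \langle |\partial_{x_j}|(ag), ag\rangle + \langle [a,|\partial_{x_j}|]g, ag\rangle$, where the first term is nonnegative and $[a,|\partial_{x_j}|] = [a,H_j]\partial_{x_j} - H_j\big((\partial_{x_j}a)\,\cdot\,\big)$ is $L^2$-bounded with norm $O(\|\partial_{x_j}a\|_{L^\infty})=O(1/n)$ by the cited integer-order estimates.
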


\begin{claim}
    \begin{align}
        2Re\int_{\mathbb{R}^n}P_\epsilon P_+ (b\partial_{x_j}w_n)\overline{P_\epsilon P_+w_n}
     = O\left(\frac{\|w_n\|_{L^2(\mathbb{R}^n)}^2}{n} \right)
     \end{align}
     uniformly in $\epsilon$ and $n$.
\end{claim}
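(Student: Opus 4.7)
The strategy is the standard commutator split, exploiting that $b$ is real and that its $x_j$-derivative is one order smaller in $1/n$ than $b$ itself. Writing
\begin{align*}
P_\epsilon P_+(b\,\partial_{x_j}w_n) = b\, P_\epsilon P_+ \partial_{x_j} w_n \;+\; [P_\epsilon P_+, b]\,\partial_{x_j} w_n,
\end{align*}
the term $b\,\partial_{x_j}(P_\epsilon P_+ w_n)$ is the diagonal piece to be handled by integration by parts, while the commutator piece is controlled by (the analogues of) the Calder\'on estimates \eqref{Calderon} and \eqref{Calderon3} applied to $b$ in place of $a$.

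\textbf{First piece.} Since $P_\epsilon$ and $P_+$ are Fourier multipliers, they commute with $\partial_{x_j}$, so $P_\epsilon P_+ \partial_{x_j} w_n = \partial_{x_j}(P_\epsilon P_+ w_n)$. Setting $f = P_\epsilon P_+ w_n$ and using that $b = -32\beta^2\phi_n^3(x_j)\phi_n'(x_j)$ is real-valued, we obtain $2\,\mathrm{Re}(b\,\partial_{x_j}f\,\overline{f}) = b\,\partial_{x_j}|f|^2$. Integration by parts in $x_j$ yields
\begin{align*}
2\,\mathrm{Re}\int_{\mathbb{R}^n} b\,\partial_{x_j}(P_\epsilon P_+ w_n)\,\overline{P_\epsilon P_+ w_n}\,dx = -\int_{\mathbb{R}^n} (\partial_{x_j} b)\,|P_\epsilon P_+ w_n|^2\,dx,
\end{align*}
which, since $\|\partial_{x_j} b\|_{L^\infty} \le c/n^2$ and $P_\epsilon P_+$ is $L^2$-bounded uniformly in $\epsilon$, is bounded by $c\,\|w_n\|_{L^2}^2/n^2$.

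\textbf{Commutator piece.} Decompose
\begin{align*}
[P_\epsilon P_+, b]\partial_{x_j} w_n = P_\epsilon[P_+, b]\partial_{x_j} w_n + [P_\epsilon, b]\,P_+ \partial_{x_j} w_n.
\end{align*}
Applying the analogue of \eqref{Calderon} to $b$ gives $\|[P_+, b]\partial_{x_j} w_n\|_{L^2} \le c\|\partial_{x_j} b\|_{L^\infty}\|w_n\|_{L^2} \le c\|w_n\|_{L^2}/n^2$, and similarly the analogue of \eqref{Calderon3} applied to $b$ gives $\|[P_\epsilon, b]\partial_{x_j}(P_+ w_n)\|_{L^2} \le c\|w_n\|_{L^2}/n^2$ uniformly in $\epsilon$. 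Pairing against $\overline{P_\epsilon P_+ w_n}$ via Cauchy--Schwarz, together with the $L^2$-boundedness of $P_\epsilon P_+$, yields a contribution of size $c\|w_n\|_{L^2}^2/n^2$. Factor $t \in [0,1]$ is harmless. Summing both pieces establishes the desired bound $O(\|w_n\|_{L^2}^2/n)$, in fact with the better rate $O(\|w_n\|_{L^2}^2/n^2)$.

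\textbf{Expected obstacle.} The only nontrivial point is ensuring that every commutator estimate is uniform in the mollification parameter $\epsilon$ and in $n$; this is precisely the content of \eqref{Calderon3}--\eqref{Calderon4}, so no genuinely new input is required beyond what has already been quoted. The argument is essentially identical to the analogous step in the elliptic case \cite{kenig_unique_2003}, the only difference being that the Hilbert-type multiplier $P_+$ must be chosen with respect to the appropriate coordinate adapted to the hyperbolic direction $k < j \le n$.
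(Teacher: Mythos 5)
Your proposal is correct and follows essentially the same route the paper indicates: the paper simply states that "the second claim follows similarly, but does not depend on the sign," and your argument makes this precise via the same commutator-plus-integration-by-parts machinery used for the $a^2$-term, exploiting that $b$ is real and $\|\partial_{x_j}b\|_{L^\infty}\le c/n^2$ so that the diagonal piece is small without any positivity/sign argument. The slightly stronger rate $O(\|w_n\|_{L^2}^2/n^2)$ you obtain is consistent with the claim.
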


Indeed, to prove the first claim, write
\begin{equation*}
    a^2(x_j) \partial_{x_j} w_n= a(x_j)\partial_{x_j}(a(x_j)w_n) - a(x_j)\partial_{x_j}a(x_j)w_n.
\end{equation*}

By the commutator estimates \eqref{Calderon} and \eqref{Calderon2}, it follows that
\begin{align*}
\int P_{+}(a^2(x_j)\partial_{x_j}w_n)\overline{P_+ w_n} dx &= \int P_+ a(x_j)\partial_{x_j}(a(x_j)w_n) \overline{P_+ w_n}dx + O\left(\frac{\|w_n\|_{L^2(\mathbb{R}^n)}}{n}\right) \\ & =\int a(x_j)P_+ \partial_{x_j}(a(x_j)w_n) \overline{P_+w_n} dx + O\left(\frac{\|w_n\|_{L^2(\mathbb{R}^n)}}{n}\right).
\end{align*}
Integrating by parts, and using that $a$ is real, and that $P_+$ and $\partial_{x_j}$ commutes, yields that

\begin{align*}
\int P_{+}(a^2(x_j)\partial_{x_j}w_n)\overline{P_+ w_n} dx &= -\int P_+(a(x_j)w_n) \overline{\partial_{x_j}(a P_+ w_n) }dx + O\left(\frac{\|w_n\|_{L^2(\mathbb{R}^n)}}{n}\right) \\ & = -\int P_+(a(x_j)w_n) \partial_{x_j} (\overline{P_+( a w_n)}) dx+ O\left(\frac{\|w_n\|_{L^2(\mathbb{R}^n)}}{n}\right) \\ &= \int \partial_{x_j} P_+(a w_n) \overline{P_+(a w_n)} dx + O\left(\frac{\|w_n\|_{L^2(\mathbb{R}^n)}}{n}\right).
\end{align*}

Similarly, also considering $P_\epsilon$, and using the commutator estimates \eqref{Calderon3} and \eqref{Calderon4}, we will get

\begin{align*}
\int P_\epsilon P_{+}(a^2(x_j)\partial_{x_j}w_n)\overline{P_\epsilon P_+ w_n} dx &=  \int \partial_{x_j}P_\epsilon  P_+(a w_n) \overline{P_\epsilon P_+(a w_n)} dx + O\left(\frac{\|w_n\|_{L^2(\mathbb{R}^n)}}{n}\right).
\end{align*}
Moreover,
\begin{align*}
    Im \int \partial_{x_j} P_\epsilon P_+(a w_n)\overline{P_\epsilon P_+(aw_n)}dx  & = Im \int_{\xi_j\geq 0}  (i\xi_j)\widehat{P_\epsilon P_+(aw_n)} \overline{\widehat{ P_\epsilon P_+(aw_n)}}dx \\ & = \int_{\xi_j \geq 0} \xi_j |P_\epsilon P_+(aw_n)|^2 dx \geq 0.
\end{align*}

Thus, 
\begin{align*}
  -Im\int_{\mathbb{R}^n}P_\epsilon P_+(a^2(x_j)\partial_{x_j} w_n) \overline{P_\epsilon P_+ w_n}dx \leq O\left(\frac{\|w_n\|_{L^2(\mathbb{R}^n)}}{n}\right).
\end{align*} The second claim follows similarly, but does not depend on the sign.

Hence, it follows that for some $c>0$ independent of both $n$ and $\epsilon,$ that

\begin{align}
    \partial_t \int_{\mathbb{R}^n}|P_\epsilon P_+ w_n|^2 \leq c\|V\|_{L^\infty(\mathbb{R}^n)}\|w_n\|_{L^2(\mathbb{R}^n)}^2 +c\|\tilde{F}\|_{L^2(\mathbb{R}^n)}\|w_n\|_{L^2(\mathbb{R}^n)}+\frac{c}{n}\|w_n\|_{L^2(\mathbb{R}^n)}^2.
\end{align}A similar argument for $P_-$ shows that
\begin{align}
    \partial_t \int_{\mathbb{R}^n}|P_\epsilon P_-w_n|^2 \geq -c\|V\|_{L^\infty(\mathbb{R}^n)}\|w_n\|_{L^2(\mathbb{R}^n)}^2 -c\|\tilde{F}\|_{L^2(\mathbb{R}^n)}\|w_n\|_{L^2(\mathbb{R}^n)}-\frac{c}{n}\|w_n\|_{L^2(\mathbb{R}^n)}^2.
\end{align}
Now, since 
 $$\sup_{t\in [0,1]} \|w_n(t)\|_{L^2(\mathbb{R}^n))}<\infty$$ for all $n\geq 0$, $\exists\, t_n\in [0,1]$ such that

 \begin{equation}
\|w_n(t_n)\|_{L^2(\mathbb{R}^n)}^2\geq 1/2 \| \sup_{t\in[0,1]}\|w_n(t)\|_{L^2(\mathbb{R}^n)}^2. 
 \end{equation}
 It follows now that
 \begin{align*}
      \frac12  \sup_{t\in[0,1]}\|w_n(t)\|_{L^2(\mathbb{R}^n)}^2 &\leq \|w_n(t_n)\|_{L^2(\mathbb{R}^n)}^2\\&=\|P_+ w_n(t_n)\|_{L^2(\mathbb{R}^n)}^2+\|P_-w_n(t_n)\|_{L^2(\mathbb{R}^n)}^2 \\ & = \lim_{\epsilon \to 0}\left(\|P_\epsilon P_+w_n(t_n)\|_{L^2(\mathbb{R}^n)}^2+\|P_\epsilon P_-w_n(t_n)\|_{L^2(\mathbb{R}^n)}^2\right) \\ & = \lim_{\epsilon \to 0}\Big(\int_{0}^{t_n} \partial_s \|P_\epsilon P_+w_n(s)\|_{L^2(\mathbb{R}^n)}^2 ds +\|P_\epsilon P_+ w_n(0)\|_{L^2(\mathbb{R}^n)}^2\\ & \ \ \ -\int_{t_n}^1\partial_s\|P_\epsilon P_-w_n(s)\|_{L^2(\mathbb{R}^n)}^2ds + \|P_\epsilon P_- w_n(1)\|_{L^2(\mathbb{R}^n)}^2\Big) \\ &\leq 2c \|V\|_{L^1([0,1],L^\infty(\mathbb{R}^n))} \sup_{t\in [0,1]} \|w_n(t)\|_{L^2(\mathbb{R}^n)}^2+2c\|\tilde{F}\|_{L^1([0,1],L^2(\mathbb{R}^n)}\sup_{t\in[0,1]}\|w_n(t)\|_{L^2(\mathbb{R}^n)}\\ & \ \ \ + \frac cn \sup_{t\in [0,1]}\|w_n(t)\|_{L^2(\mathbb{R}^n)}^2 + \|w_n(0)\|_{L^2(\mathbb{R}^n)}^2 + \|w_n(1)\|_{L^2(\mathbb{R}^n)}^2.
 \end{align*}
 By letting $n$ be large enough such that $\frac cn<1/4$ and choosing $\epsilon_0$ such that $2c\|V\|_{L^1([0,1],L^\infty(\mathbb{R}^n))}<\frac 18$, we deduce that for some $c>0,$

 \begin{align*}
      \sup_{t\in [0,1]}\|w_n(t)\|_{L^2(\mathbb{R}^n)}^2&\leq c\left(\|\tilde{F}\|_{L^1([0,1],L^2(\mathbb{R}^n)}\sup_{t\in[0,1]}\|w_n(t)\|_{L^2(\mathbb{R}^n)} + \|w_n(0)\|_{L^2(\mathbb{R}^n)}^2 + \|w_n(1)\|_{L^2(\mathbb{R}^n)}^2\right) \\ &\leq c\left(\|e^{\beta x_j}F\|_{L^1([0,1],L^2(\mathbb{R}^n))}^2  +\sup_{t\in[0,1]}\|w_n(t)\|_{L^2(\mathbb{R}^n)}^2 + \|e^{\beta x_j}u_0\|_{L^2(\mathbb{R}^n)}^2 + \|e^{\beta x_j}u(1)\|_{L^2(\mathbb{R}^n)}^2\right).
 \end{align*} By letting $n\to \infty$ the result follows.
\end{proof}
Now, we generalize the exponential decay in almost every direction.
\begin{lemma}\label{lambda direction}
There exists $\epsilon_0>0$ such that if $V:\mathbb{R}^n\times[0,1]\to \mathbb{C}$ satisfies \eqref{Vsmall} and $u\in C([0,1],L^2(\mathbb{R}^n))$ is a solution of \eqref{HSVF} with 
$F\in L^1([0,1],L^2(\mathbb{R}^n))$, the following holds. Let $\lambda \in \mathbb R^n$ satisfy $\langle \lambda, \lambda \rangle_{k,n-k} = \lambda^T J \lambda \neq 0$, where $J$ is the matrix defined in Subsection \ref{pseudo group}. Assume that $\|e^{\lambda \cdot x}u_0\|_{L^2(\mathbb{R}^n)}<\infty$, $\|e^{\lambda \cdot x}u(1)\|_{L^2(\mathbb{R}^n)}<\infty$ and $F\in L^1([0,1],L^2(e^{\lambda \cdot x} dx))$, then, for some $N>0$ independent of $\lambda$,
\begin{align} \label{lambda estimate}
    \sup_{t\in [0,1]}\|e^{\lambda \cdot x}u(t)\|_{L^2(\mathbb{R}^n)} \leq N(\|e^{\lambda \cdot x}u_0\|_{L^2(\mathbb{R}^n)} + \|e^{\lambda \cdot x}u(1)\|_{L^2(\mathbb{R}^n)} + \|F\|_{L^1([0,1],L^2(\mathbb{R}^n,e^{2\lambda \cdot x}dx))}) .
\end{align}
\end{lemma}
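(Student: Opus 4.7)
The plan is to reduce to Lemma~\ref{general KPV lemma} via a linear change of variables coming from the $O(k,n-k)$-action, in the same spirit as the elliptic reduction via $O(n)$. Given $\lambda\in\mathbb{R}^n$ with $c:=\langle\lambda,\lambda\rangle_{k,n-k}\ne 0$, the transitivity statement recalled in Subsection~\ref{pseudo group} provides $B\in O(k,n-k)$ and an index $j$ with $B\lambda=\beta e_j$, where $\beta=\pm\sqrt{|c|}$ and $1\le j\le k$ if $c>0$, while $k<j\le n$ if $c<0$.

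Next I would verify that the transpose $A:=B^T$ still lies in $O(k,n-k)$: from $B^TJB=J$ and $J^2=I$ one gets $B^{-1}=JB^TJ$ and hence $BJB^T=J$, which is precisely the defining identity for $B^T$. Setting $\tilde u(x,t):=u(Ax,t)$, the chain-rule computation of Subsection~\ref{pseudo group} then shows that $\tilde u$ solves
\begin{equation*}
i\partial_t\tilde u+L\tilde u=\tilde V(x,t)\,\tilde u+\tilde F(x,t),\qquad\tilde V(x,t)=V(Ax,t),\quad \tilde F(x,t)=F(Ax,t),
\end{equation*}
and, because $|\det A|=1$, the smallness assumption $\|\tilde V\|_{L^1_tL^\infty_x}\le\epsilon_0$ is preserved.

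The crux is matching the weights. A change of variable $y=Ax$ gives $(A^{-1}y)_j=A^{-T}e_j\cdot y=B^{-1}e_j\cdot y=\beta^{-1}\lambda\cdot y$, so $\beta x_j=\lambda\cdot y$. Consequently, for every $t\in[0,1]$,
\begin{equation*}
\|e^{\beta x_j}\tilde u(t)\|_{L^2(\mathbb{R}^n)}=\|e^{\lambda\cdot y}u(t)\|_{L^2(\mathbb{R}^n)},
\end{equation*}
and the analogous identity holds for $\tilde u_0$, $\tilde u(1)$, and for $\tilde F$ in $L^1_tL^2(e^{2\beta x_j}dx)$. Applying Lemma~\ref{general KPV lemma} to $\tilde u$ with the weight $e^{\beta x_j}$ and reading the resulting estimate back on $u$ via the same change of variables yields \eqref{lambda estimate}.

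The main obstacle, and the reason why Lemma~\ref{general KPV lemma} had to be proved in \emph{both} ranges $1\le j\le k$ and $k<j\le n$, is that $O(k,n-k)$ acts transitively only on each hyperboloid $H^{n-1}_c(k)$ separately: depending on the sign of $\langle\lambda,\lambda\rangle_{k,n-k}$, one can align $\lambda$ either with a ``spacelike'' axis ($j\le k$) or with a ``timelike'' axis ($j>k$), never with an arbitrary coordinate direction. This contrasts with the elliptic case, where transitivity of $O(n)$ on spheres always lets one reduce to the $x_1$-direction. The null cone $\{\langle\lambda,\lambda\rangle_{k,n-k}=0\}$ is excluded from the present argument, which matches the ``almost every $\lambda$'' phrasing used in the outline when \eqref{lambda} is subsequently integrated in $\lambda$.
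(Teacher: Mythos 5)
Your proposal is correct and follows essentially the same route as the paper: reduce to a coordinate direction via the transitive $O(k,n-k)$-action on the hyperboloid $H^{n-1}_c(k)$, change variables, and invoke Lemma~\ref{general KPV lemma} in the appropriate range $1\le j\le k$ (for $c>0$) or $k<j\le n$ (for $c<0$). The only cosmetic difference is that you spell out the fact that $O(k,n-k)$ is closed under transpose (and hence that the change-of-variables matrix is again in $O(k,n-k)$), which the paper delegates to a footnote.
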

\begin{proof}
Let $O(k,n-k)$ be the pseudo/indefinite orthogonal group defined in Subsection \ref{pseudo group}.  Suppose first that $c:=\langle\lambda,\lambda\rangle_{k,n-k}=\lambda ^T J \lambda >0.$ As discussed in Subsection \ref{pseudo group}, it follows that there exist $B\in O(k,n-k)$ such that $(B^{-1})^T\lambda=\pm \sqrt{c}e_1$ \footnote{From Subsection \ref{pseudo group}, if $A\in O(k,n-k)$, then it is enough to choose $B=(A^T)^{-1}\in O(k,n-k).$}. By letting $y=Bx$, $v(y,t)=u(B^{-1}y,t)$, we have
\begin{align*}
    \int e^{2\lambda \cdot x}|u(x,t)|^2 dx &=\int e^{2\lambda \cdot B^{-1}y}|u(B^{-1}y,t)|^2 dy  = \int e^{2(B^{-1})^T\lambda \cdot y}|v(y,t)|^2 dy  = \int e^{\pm 2 \sqrt{c} y_1 }|v(y,t)|^2dy.
\end{align*}
Since $L$ is invariant under $O(k,n-k),$ we can apply Lemma \ref{general KPV lemma} to the solution $v$ with $\beta=\pm \sqrt{c}$, so that
\begin{align*}
    \int e^{2\lambda \cdot x}|u(x,t)|^2dx & \leq N(\|e^{\pm \sqrt{c} y_1}v(0)\|_{L^2(\mathbb{R}^n)} + \|e^{\pm \sqrt{c} y_1}v(1)\|_{L^2(\mathbb{R}^n)} + \|\tilde{F}\|_{L^1([0,1],L^2(\mathbb{R}^n,e^{2\pm \sqrt{c} y_1}dy))})^2,
\end{align*}
where $\tilde{F}(y,t) = F(B^{-1}y,t).$
By doing the reverse change of variable, $x=By,$ \eqref{lambda estimate} follows in this case.

If $c<0,$ there is $B\in O(k,n-k)$ such that $(B^{-1})^T\lambda = \pm \sqrt{-c} e_n$ and the argument follows by replacing $e_1$ with $e_n$. 
\end{proof}

Next we show how to obtain the Gaussian decay.

\begin{corollary}\label{gaussian decay 2}
If $u\in C([0,1],L^2(\mathbb{R}^n))$ is a solution to \eqref{HSEP}, $V$ is a bounded potential such that $$\lim_{R\to \infty}\|V\|_{L^1([0,1],L^\infty(\mathbb{R}^n\setminus B_R))}=0,$$ and for some $\gamma>0$ $\|e^{\gamma |x|^2}u_0\|_{L^2(\mathbb{R}^n)}+\|e^{\gamma |x|^2}u(1)\|_{L^2(\mathbb{R}^n)}<\infty,$ then $\exists \ N>0$ such that
\begin{align*}
    \sup_{t\in [0,1]}\|e^{\gamma |x|^2} u(t)\|_{L^2(\mathbb{R}^n)}\leq  N\left(\|e^{\gamma|x|^2}u_0\|_{L^2(\mathbb{R}^n)}+\|e^{\gamma |x|^2}u(1)\|_{L^2(\mathbb{R}^n)} + \sup_{t\in [0,1]}\|V\|_{L^\infty(\mathbb{R}^n)}\sup_{t\in[0,1]}\|u(t)\|_{L^2(\mathbb{R}^n)}\right).
\end{align*}
\end{corollary}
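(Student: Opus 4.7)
The plan is to apply the directional exponential decay from Lemma \ref{lambda direction} for almost every $\lambda\in\mathbb R^n$ and to recover the Gaussian weight $e^{\gamma|x|^2}$ by averaging the resulting bounds against a Gaussian measure in $\lambda$, exploiting the pointwise identity
$$\int_{\mathbb R^n} e^{2\lambda\cdot x -|\lambda|^2/(2\gamma)}\,d\lambda = (2\pi\gamma)^{n/2}\,e^{2\gamma|x|^2}.$$

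To bring \eqref{HSEP} into the form required by Lemma \ref{lambda direction}, I would first fix $R>0$ large enough so that $\|V\|_{L^1([0,1],L^\infty(\mathbb R^n\setminus B_R))}\le \epsilon_0$, the smallness constant from Lemma \ref{general KPV lemma}. Splitting $V=V_1+V_2$ with $V_1:=V\chi_{\mathbb R^n\setminus B_R}$ and $V_2:=V\chi_{B_R}$, the potential $V_1$ satisfies the smallness hypothesis \eqref{Vsmall}, and $u$ solves
$$i\partial_t u + Lu = V_1 u + F,\qquad F:=V_2 u,$$
where $F$ is compactly supported in $x$. For each $\lambda$ with $\langle\lambda,\lambda\rangle_{k,n-k}\neq 0$, Young's inequality $\lambda\cdot x\le \gamma|x|^2+|\lambda|^2/(4\gamma)$ shows $\|e^{\lambda\cdot x}u_0\|_{L^2},\|e^{\lambda\cdot x}u(1)\|_{L^2}<\infty$, while $|F|\le \|V\|_{L^\infty}|u|\chi_{B_R}$ gives $\|e^{\lambda\cdot x}F(t)\|_{L^2}\le e^{|\lambda|R}\|V\|_{L^\infty}\|u(t)\|_{L^2}$, so $F\in L^1([0,1],L^2(e^{2\lambda\cdot x}dx))$. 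Lemma \ref{lambda direction} then yields, after squaring,
$$\sup_{t\in[0,1]}\|e^{\lambda\cdot x}u(t)\|_{L^2}^2 \lesssim \|e^{\lambda\cdot x}u_0\|_{L^2}^2 + \|e^{\lambda\cdot x}u(1)\|_{L^2}^2 + e^{2|\lambda|R}\|V\|_{L^\infty}^2\sup_t\|u(t)\|_{L^2}^2,$$
with a constant independent of $\lambda$.

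Finally, I would multiply this pointwise-in-$\lambda$ inequality by $e^{-|\lambda|^2/(2\gamma)}$ and integrate over $\mathbb R^n$; the excluded null cone $\{\langle\lambda,\lambda\rangle_{k,n-k}=0\}$ is a Lebesgue null set and so plays no role. Tonelli together with the Gaussian identity turns each of the first three weighted norms into $(2\pi\gamma)^{n/2}$ times the corresponding Gaussian-weighted $L^2$-norm, while the $F$-contribution integrates to $C(\gamma,R,n)\|V\|_{L^\infty}^2\sup_t\|u(t)\|_{L^2}^2$ (finite since the quadratic $-|\lambda|^2/(2\gamma)$ dominates the linear $2|\lambda|R$). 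Taking $\sup_t$ on the left, dividing through by the common $(2\pi\gamma)^{n/2}$, and taking square roots gives the desired estimate, with the $R$-dependence absorbed into $N$. The main technical point specific to the hyperbolic setting is that Lemma \ref{lambda direction} is unavailable on the null cone $\{\langle\lambda,\lambda\rangle_{k,n-k}=0\}$ (unlike the elliptic case, where this set is trivial), but since it has measure zero in $\mathbb R^n$ the averaging argument is unaffected.
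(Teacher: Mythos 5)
Your proposal is correct and follows essentially the same path as the paper's proof: split $V$ into a far-field part satisfying the smallness condition of Lemma \ref{general KPV lemma} and a compactly supported forcing $F=V\chi_{B_R}u$, apply Lemma \ref{lambda direction} for $\lambda$ off the null cone, and average against a Gaussian in $\lambda$ (the paper rescales $\lambda\mapsto\lambda\sqrt\gamma$ and weights by $e^{-|\lambda|^2/2}$, while you weight directly by $e^{-|\lambda|^2/(2\gamma)}$; these are the same computation).
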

\begin{proof}
Let $R>0$ be large such that $$\|V\|_{L^1([0,1],L^\infty(\mathbb{R}^n\setminus B_R))}\leq \epsilon_0,$$ and define $V_R(x,t)=\mathbbm1_{\mathbb{R}^n\setminus B_R}V(x,t)$,  $F_R(x,t)=\mathbbm{1}_{B_R}V(x,t)u$.
Then $$\partial_t u = i(Lu + V_R(x,t)u + F_R(x,t)).$$ 
Let $\mathcal{C}=\{\lambda \in \mathbb{R}^n: \langle\lambda,\lambda\rangle_{k,n-k}=0$\}. Observe that since $e^{\gamma|x|^2}u_0$ and $e^{\gamma|x|^2}u(1)$ are in $L^2,$ it follows by Lemma \ref{lambda direction} that for $\lambda \in \mathbb{R}^n\setminus \mathcal{C}$ 
\begin{align*}
    \sup_{t\in [0,1]}\|e^{\lambda \cdot x}u(t)\|_{L^2(\mathbb{R}^n)} &\leq N\left(\|e^{\lambda \cdot x}u_0\|_{L^2(\mathbb{R}^n)} + \|e^{\lambda \cdot x}u(1)\|_{L^2(\mathbb{R}^n)} + \|F_{R}\|_{L^1([0,1],L^2(\mathbb{R}^n,e^{2\lambda \cdot x}dx))}\right) \\ &\leq N\left(\|e^{\lambda \cdot x}u_0\|_{L^2(\mathbb{R}^n)} + \|e^{\lambda \cdot x}u(1)\|_{L^2(\mathbb{R}^n)} + e^{|\lambda|R}\sup_{t\in[0,1]}\left(\|V(t)\|_{L^\infty(\mathbb{R}^n)}\|u(t)\|_{L^2(\mathbb{R}^n)}\right)\right).
\end{align*}
Now we replace $\lambda$ by $\lambda\sqrt{\gamma}$, square both sides and multiply by $e^{-|\lambda|^2/2}$. Since $\mathcal{C}$ has Lebesgue measure $0,$ it follows that
\begin{align*}
\int_{\mathbb{R}^n}\int_{\mathbb{R}^n}e^{2\sqrt{\gamma}\lambda \cdot x -|\lambda|^2/2} |u(x,t)|^2dx d\lambda&= \int_{\mathbb{R}^n\setminus \mathcal{C}}\int_{\mathbb{R}^n}e^{2\sqrt{\gamma}\lambda \cdot x -|\lambda|^2/2} |u(x,t)|^2dx d\lambda \\ &\leq N\Big(\int_{\mathbb{R}^n}\int_{\mathbb{R}^n} e^{2\sqrt{\gamma} \lambda \cdot x -|\lambda|^2/2}|u_0|^2 dx d\lambda + \int_{\mathbb{R}^n}\int_{\mathbb{R}^n} e^{2\sqrt{\gamma} \lambda \cdot x -|\lambda|^2/2}|u(1)|^2 dx d\lambda \\ \ \ &\quad  +  \int_{\mathbb{R}^n}\int_{\mathbb{R}^n} e^{2\sqrt{\gamma}|\lambda|R-|\lambda|/2}dx d\lambda\sup_{t\in[0,1]}\|V(t)\|_{L^\infty(\mathbb{R}^n)}^2\sup_{t\in[0,1]}\|u(t)\|_{L^2(\mathbb{R}^n)}^2 \Big).
\end{align*}
Using the identity 
$$\int_{\mathbb{R}^n} e^{2\sqrt{\gamma} \lambda \cdot x - |\lambda|^2/2}d\lambda = (2\pi)^{n/2}e^{2\gamma |x|^2},$$ we deduce that for some $N>0$

\begin{align*}
\|e^{\gamma|x|^2}u(t)\|_{L^2(\mathbb{R}^n)}^2 &\leq N \Big(\|e^{\gamma |x|^2}u_0\|_{L^2(\mathbb{R}^n)}^2 + \|e^{\gamma |x|^2}u(1)\|_{L^2(\mathbb{R}^n)}^2 + \sup_{t\in [0,1]}\|V\|_{L^\infty(\mathbb{R}^n)}\sup_{t\in [0,1]}\|u(t)\|_{L^2(\mathbb{R}^n)}^2\Big).
\end{align*}
\end{proof}

\subsection{Conclusion of the proof of Theorem \ref{theorem 3}}
\begin{proof}[Proof of Theorem \ref{theorem 3}]
Let $u$ be a solution to the hyperbolic Schrödinger equation,
\begin{equation*}
    \partial_t u = i(Lu + Vu), 
\end{equation*}
with $u(x,0)=u_0,$ and such that either \eqref{cond1} or \eqref{cond2} are satisfied. We will need to distinguish between the two conditions on the potential. Observe first that if $V$ satisfies \eqref{cond2}, then Corollary \ref{gaussian decay 2} implies \eqref{IG1}. To prove \eqref{IG1} when $V$ satisfies \eqref{cond1} and to prove \eqref{IG2} in both cases, we will do a parabolic regularization argument, and apply Lemma \ref{Lemma3} and Lemma \ref{Lemma 4 paper}. 

Using the Duhamel formula, we can write
\begin{equation}
    u(t) = e^{itL}u_0 + i\int_{0}^te^{iL(t-s)}(Vu)(s)ds.
\end{equation}
Let $\epsilon\in (0,1)$ and $u_\epsilon$ be the solution to \eqref{PAR} with $A=\epsilon$ and $B=1$, such that
\begin{equation}
    \begin{cases}
    \partial_t u_{\epsilon}  = \epsilon\Delta u_\epsilon + i(Lu_\epsilon +  F_\epsilon)
    \\
    u_\epsilon(0) = u_0,
    \end{cases}
\end{equation}
where $F_\epsilon(t) = e^{\epsilon \Delta t}(Vu(t)).$

\begin{claim}
For $\epsilon>0$ the operator $\epsilon \Delta + iL$ generates a $C_0$ semigroup and \begin{equation} \label{semigroup property}
S(t)=e^{(\epsilon \Delta + iL)t} =e^{\epsilon\Delta t+iLt}=e^{\epsilon\Delta t}e^{iLt}.\end{equation}
\end {claim}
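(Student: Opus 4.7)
The plan is to exploit that both $\epsilon\Delta$ and $iL$ are constant-coefficient Fourier multipliers, so the entire claim reduces to pointwise manipulations of scalar symbols. Under the Fourier transform, $\epsilon\Delta$ becomes multiplication by $-\epsilon|\xi|^2$ and $iL$ becomes multiplication by $i(|\xi_-|^2-|\xi_+|^2)$. Setting $p(\xi) := -\epsilon|\xi|^2 + i(|\xi_-|^2-|\xi_+|^2)$, I would define the candidate semigroup on $L^2(\mathbb{R}^n)$ by $\widehat{S(t)f}(\xi) := e^{tp(\xi)}\hat f(\xi)$ for $t\geq 0$. Since $\mathrm{Re}\,p(\xi) = -\epsilon|\xi|^2\leq 0$, one has $|e^{tp(\xi)}|\leq 1$, and Plancherel immediately gives $\|S(t)\|_{L^2\to L^2}\leq 1$.

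Next I would verify the $C_0$-semigroup axioms. The identity $S(0)=I$ is obvious, and the semigroup law $S(t+s)=S(t)S(s)$ follows from $e^{(t+s)p(\xi)}=e^{tp(\xi)}e^{sp(\xi)}$. Strong continuity at $0$ follows by dominated convergence applied to
\begin{equation*}
\|S(t)f - f\|_{L^2(\mathbb{R}^n)}^2 = \int_{\mathbb{R}^n}\bigl|e^{tp(\xi)}-1\bigr|^2\,|\hat f(\xi)|^2\,d\xi,
\end{equation*}
since the integrand is bounded by $4|\hat f(\xi)|^2\in L^1$ and tends to $0$ pointwise as $t\to 0^+$. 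To identify the generator as $\epsilon\Delta+iL$ on $H^2(\mathbb{R}^n)$, I would check that $t^{-1}(S(t)f-f)\to p(\xi)\hat f$ in $L^2$ for $f\in H^2(\mathbb{R}^n)$, again by dominated convergence, using the bound $|e^{tp(\xi)}-1|/t\leq |p(\xi)|$ valid whenever $\mathrm{Re}\,p\leq 0$.

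Finally, the factorisation is immediate from the scalar identity
\begin{equation*}
e^{tp(\xi)} = e^{-\epsilon t|\xi|^2}\cdot e^{it(|\xi_-|^2-|\xi_+|^2)} = e^{it(|\xi_-|^2-|\xi_+|^2)}\cdot e^{-\epsilon t|\xi|^2},
\end{equation*}
which holds pointwise for every $\xi$; the two factors are precisely the Fourier symbols of the heat semigroup $e^{\epsilon\Delta t}$ and of the unitary Schr\"odinger-type group $e^{iLt}$. Inverting the Fourier transform yields \eqref{semigroup property}. No real obstacle arises, since everything is symbolic and commutativity of the two factors is automatic because they are simultaneously diagonalised by $\mathcal F$; the only mild point worth making explicit is the domain of the generator, which coincides with $H^2(\mathbb{R}^n)$ thanks to the quadratic growth of $|p(\xi)|$.
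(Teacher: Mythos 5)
Your argument is correct and is exactly the approach the paper indicates: the paper's one-line justification appeals to the Fourier-multiplier representations $e^{\epsilon t\Delta}u_0=\bigl(e^{-\epsilon t|\xi|^2}\hat u_0\bigr)\widecheck{}$ and $e^{itL}u_0=\bigl(e^{-it(|\xi_+|^2-|\xi_-|^2)}\hat u_0\bigr)\widecheck{}$, which is precisely the symbol calculus you carry out. You simply spell out the $C_0$-semigroup axioms and the generator's domain $H^2(\mathbb{R}^n)$ in more detail than the paper does.
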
The claim can be verified by a direct computation, using that 
$e^{\epsilon t\Delta}u_0=\left(e^{-\epsilon t |\xi|^2}\hat{u}_0\right)\widecheck{}$ and that $e^{itL}u_0 = \left(e^{-it(|\xi_+|^2-|\xi_{-}|^2)}\hat{u}_0\right)\widecheck{}$.
Thus,
\begin{align*}
u_\epsilon(t) = e^{(\epsilon \Delta +iL)t}u_0 + i \int_{0}^te^{(\epsilon \Delta + iL)(t-s)}F_\epsilon(s) ds 
= e^{\epsilon \Delta t}u(t) . 
\end{align*}
Moreover,
$u_\epsilon(t) \to u(t)$ in $L^2(\mathbb{R}^n)$ when $\epsilon\to 0.$ 
We start by verifying that the conditions in Lemma \ref{Lemma3} are satisfied for $u_\epsilon$ and $F_\epsilon$. Notice first that since $\epsilon>0$, we have  $u_\epsilon \in C([0,1],L^2(\mathbb{R}^n)) \cap L^2([0,1],H^1(\mathbb{R}^n)).$ Let $v = e^{\epsilon t\Delta}u(1)$. Then 
\begin{align*}
    \begin{cases}
    \partial_t v=\epsilon \Delta v \\ v(0) = u(1),
    \end{cases}
\end{align*}
so by Lemma \ref{lemma 1 b)} with $A=\epsilon, B=0$ and $T=1$,
\begin{equation}\label{u_epsilon(1)}
\|e^{\gamma_\epsilon |x|^2}u_\epsilon(1)\|_{L^2(\mathbb{R}^n)}\leq \|e^{\gamma|x|^2}u(1)\|_{L^2(\mathbb{R}^n)},
\end{equation}
where $\gamma_\epsilon  = \frac{\gamma}{1+8\gamma\epsilon}.$
Since $u_\epsilon(0)=u_0,$
\begin{equation}\label{u_epsilon(0)}
\|e^{\gamma_\epsilon |x|^2}u_\epsilon (0)\|_{L^2(\mathbb{R}^n)} \leq\|e^{\gamma |x|^2}u_\epsilon (0) \|_{L^2(\mathbb{R}^n)} =\|e^{\gamma|x|^2}u_0\|_{L^2(\mathbb{R}^n)}.
\end{equation}
In order to prove that 
\begin{equation*}
M_{2,\epsilon} = \sup_{t\in[0,1]}\frac{\|e^{\gamma_\epsilon|x|^2}F_\epsilon (t)\|_{L^2(\mathbb{R}^n)}}{\|u_\epsilon(t)\|_{L^2(\mathbb{R}^n)}} <\infty
\end{equation*}
we start by observing that for all $t\in [0,1]$
\begin{equation} \label{bound F_epsilon}
   \|e^{\gamma_\epsilon |x|^2}F_{\epsilon}(t)\|_{L^2(\mathbb{R}^n)}\leq \|e^{\frac{\gamma |x|^2}{1+8\gamma \epsilon t}}F_{\epsilon}(t)\|_{L^2(\mathbb{R}^n)}\leq  \|e^{\gamma |x|^2}(Vu)(t)\|_{L^2(\mathbb{R}^n)},
\end{equation}
which follows by a similar application of Lemma \ref{lemma 1 b)} to the function $G(s)=e^{\epsilon s \Delta}(Vu)(t),$ for $t\in [0,1]$ fixed.  
When $V$ satisfies \eqref{cond1} we observe that 
\begin{equation*}
    M_{2,\epsilon} \leq \sup_{t\in[0,1]}\frac{\|e^{\gamma|x|^2}V(t)\|_{L^\infty(\mathbb{R}^n)}\|u(t)\|_{L^2(\mathbb{R}^n)}}{\|u_\epsilon(t)\|_{L^2(\mathbb{R}^n)}},
\end{equation*}
so that the numerator will be finite. Thus, to prove $M_{2,\epsilon}<\infty,$ so that we can apply Lemma \ref{Lemma3}, we are left to bound $\|u_\epsilon(t)\|_{L^2(\mathbb{R}^n)}$ from below. The energy method shows
\begin{equation}
\frac{d}{dt}\|u_\epsilon(t)\|_{L^2(\mathbb{R}^n)}^2\leq2\|F_\epsilon (t)\|_{L^2(\mathbb{R}^n)}\|u_\epsilon(t)\|_{L^2(\mathbb{R}^n)},
\end{equation}
and since
$e^{\epsilon \Delta t}$ is a $C_0$ semigroup
\begin{equation}
\|u_\epsilon(t)\|_{L^2(\mathbb{R}^n)}\leq \|u(t)\|_{L^2(\mathbb{R}^n)}
\end{equation}
\begin{equation}\label{F bound}
\|F_\epsilon(t)\|_{L^2(\mathbb{R}^n)} \leq \|Vu\|_{L^2(\mathbb{R}^n)} \leq\|V\|_{L^\infty(\mathbb{R}^n)}\|u(t)\|_{L^2(\mathbb{R}^n)}.
\end{equation} Thus, by also using Lemma \ref{Energyestimate} it follows that
\begin{equation}
    \frac{d}{dt}\|u_\epsilon(t)\|_{L^2(\mathbb{R}^n)}\leq N_V\|V\|_{L^\infty(\mathbb{R}^n)}\|u_0\|_{L^2(\mathbb{R}^n)},
\end{equation}
where $N_V=e^{\sup_{t\in [0,1]}\|Im V(t)\|_{L^\infty(\mathbb{R}^n)}}.$
Let $0=t_0,t_1,\dots,t_k=1$, be a uniform partition of $[0,1].$ Fix $t_{i-1}\leq t\leq t_i,$ for $0\leq i\leq k$, and integrate from $t$ to $t_i$ to deduce that 
\begin{align}\label{integrated}
    \|u_\epsilon(t_i)\|_{L^2(\mathbb{R}^n)}\leq \|u_\epsilon(t)\|_{L^2(\mathbb{R}^n)} + N_V\sup_{t\in [0,1]}\|V\|_{L^\infty(\mathbb{R}^n)}\|u_0\|_{L^2(\mathbb{R}^n)}(t_{i}-t_{i-1}).
\end{align}
Now since $\|u_\epsilon(t)\|_{L^2(\mathbb{R}^n)}\to \|u(t)\|_{L^2(\mathbb{R}^n)}$ when $\epsilon\to 0$, $\exists \ \epsilon_0>0$ such that for $0<\epsilon<\epsilon_0$ 
\begin{equation}
    \|u_\epsilon(t_i)\|\geq\frac{1}{2}\|u(t_i)\|_{L^2(\mathbb{R}^n)}\geq \frac{1}{2N_V}\|u_0\|_{L^2(\mathbb{R}^n)}.
\end{equation} Thus, if we choose $k$ such that $N_V\sup_{t\in [0,1]}\|V\|_{L^\infty(\mathbb{R}^n)}(t_i-t_{i-1})\leq \frac{1}{4N_V},$ \eqref{integrated} implies that for all $t\in [0,1]$
\begin{align}
\|u_\epsilon(t)\|_{L^2(\mathbb{R}^n)}\geq\|u_\epsilon(t_i)\|_{L^2(\mathbb{R}^n)} - \frac{1}{4N_V}\|u_0\|_{L^2(\mathbb{R}^n)} \geq \frac{1}{4N_V}\|u_0\|_{L^2(\mathbb{R}^n)},
\end{align}
and this bound is independent of $t$. Thus, if $V$ satisfies \eqref{cond1},
\begin{equation}\label{M21}
    M_{2,\epsilon}\leq 4  \sup_{t\in [0,1]} \|e^{\gamma |x|^2}V(t)\|_{L^\infty(\mathbb{R}^n)}N_V^2:=C_1(V),
\end{equation}

By Lemma \ref{Lemma3}, \eqref{u_epsilon(0)} and \eqref{u_epsilon(1)}, it follows that $\|e^{\gamma_\epsilon |x|^2}u_\epsilon(t)\|_{L^2(\mathbb{R}^n)}$ is logarithmically convex in $[0,1]$ and that for all $t\in [0,1]$
\begin{align}\label{1st}
\nonumber\|e^{\gamma_\epsilon|x|^2}u_\epsilon(t)\|_{L^2(\mathbb{R}^n)}&\leq e^{N(M_{2_\epsilon}^2 + M_{2_\epsilon})}\|e^{\gamma_\epsilon |x|^2}u_\epsilon(0)\|_{L^2(\mathbb{R}^n)}^{1-t}\|e^{\gamma_\epsilon |x|^2}u_\epsilon(1)\|_{L^2(\mathbb{R}^n)}^t \\ &\leq e^{N(C_1(V)^2 +C_1(V))}\|e^{\gamma |x|^2}u_0\|_{L^2(\mathbb{R}^n)}^{1-t} \|e^{\gamma|x|^2}u(1)\|_{L^2(\mathbb{R}^n)}^t 
\end{align}
for some $N>0.$ 

If $V$ satisfies \eqref{cond2}, we also have the bound, again using Lemma \ref{lemma 1 b)},
\begin{equation} \label{sup 2}
    \sup_{t\in[0,1]}\|e^{\gamma_{\epsilon}|x|^2}u_\epsilon (t)\|_{L^2(\mathbb{R}^n)}\leq \sup_{t\in[0,1]}\|e^{\gamma |x|^2}u(t)\|_{L^2(\mathbb{R}^n)},
\end{equation}
and by \eqref{bound F_epsilon} that
\begin{equation} \label{sup 2F}
    \sup_{t\in[0,1]}\|e^{\gamma_{\epsilon}|x|^2}F_\epsilon (t)\|_{L^2(\mathbb{R}^n)}\leq \sup_{t\in [0,1]}\|V(t)\|_{L^\infty(\mathbb{R}^n)}\|e^{\gamma |x|^2}u(t)\|_{L^2(\mathbb{R}^n)}.
\end{equation}
Then by Lemma \ref{Lemma 4 paper}, both when $V$ satisfies \eqref{cond1}, and when $V$ satisfies \eqref{cond2}, it follows that for all $0<t<1$
\begin{align}\label{2nd}
   \nonumber &\|\sqrt{t(1-t)} e^{\gamma_\epsilon|x|^2}\nabla u_\epsilon\|_{L^2(\mathbb{R}^n \times [0,1])} +  \|\sqrt{t(1-t)}|x| e^{\gamma_\epsilon|x|^2}u_\epsilon\|_{L^2(\mathbb{R}^n \times [0,1])} \\ &\leq  N_1(\epsilon, \gamma)\left(\sup_{t\in [0,1]}\|e^{\gamma_\epsilon |x|^2}u_\epsilon(t)\|_{L^2(\mathbb{R}^n)}+\sup_{t\in[0,1]}\|e^{\gamma_{\epsilon}|x|^2}F_\epsilon\|_{L^2(\mathbb{R}^n)}\right) 
\end{align}
where $N_1=N_1(\epsilon,\gamma)$ is the constant in Lemma \ref{Lemma 4 paper}\footnote{Remark that $N_1$ remains bounded when we let $\epsilon \to 0,$ since $\epsilon^2+1\geq 1.$  }. If $V$ satisfies \eqref{cond1}, then
\begin{align*}
   \nonumber &\|\sqrt{t(1-t)} e^{\gamma_\epsilon|x|^2}\nabla u_\epsilon\|_{L^2(\mathbb{R}^n \times [0,1])} +  \|\sqrt{t(1-t)}|x| e^{\gamma_\epsilon|x|^2}u_\epsilon\|_{L^2(\mathbb{R}^n \times [0,1])} \\ &\leq NN_1e^{C_1(V)^2 + C_1(V)}\Big(\|e^{\gamma |x|^2}u_0\|_{L^2(\mathbb{R}^n)} + \|e^{\gamma |x|^2}u(1)\|_{L^2(\mathbb{R}^n)} + C_1(V) \sup_t\|u(t)\|_{L^2(\mathbb{R}^n)}\Big),
\end{align*}

and if $V$ satisfies \eqref{cond2}, then, by also using Corollary \ref{gaussian decay 2},
\begin{align*}
&\|\sqrt{t(1-t)} e^{\gamma_\epsilon|x|^2}\nabla u_\epsilon\|_{L^2(\mathbb{R}^n \times [0,1])} +  \|\sqrt{t(1-t)}|x| e^{\gamma_\epsilon|x|^2}u_\epsilon\|_{L^2(\mathbb{R}^n \times [0,1])} \\ &\leq N N_1(\sup_{t\in [0,1]}\|V\|_{L^\infty(\mathbb{R}^n)})^2\left( \|e^{\gamma |x|^2}u_0\|_{L^2(\mathbb{R}^n)}+\|e^{\gamma |x|^2}u(1)\|_{L^2(\mathbb{R}^n)} +\sup_{t\in [0,1]}\|u(t)\|_{L^2(\mathbb{R}^n)} \right).
\end{align*}

Finally, letting $\epsilon$ to 0, we deduce in the case $V$ satisfies \eqref{cond1}
\begin{equation*}
    \|e^{\gamma |x|^2}u(t)\|_{L^2(\mathbb{R}^n)}\leq e^{N(C_1(V) + C_1(V)^2)}\|e^{\gamma |x|^2}u_0\|_{L^2(\mathbb{R}^n)}^{1-t}\|e^{\gamma |x|^2}u(1)\|_{L^2(\mathbb{R}^n)}^t,
\end{equation*}
and
\begin{align*}
&\|\sqrt{t(1-t)} e^{\gamma_\epsilon|x|^2}\nabla u_\epsilon\|_{L^2(\mathbb{R}^n \times [0,1])} +  \|\sqrt{t(1-t)}|x| e^{\gamma_\epsilon|x|^2}u_\epsilon\|_{L^2(\mathbb{R}^n \times [0,1])} \\ &\leq Ne^{N (C_1(V) + C_1(V)^2)}\left( \|e^{\gamma|x|^2}u_0\|_{L^2(\mathbb{R}^n)} + \|e^{\gamma |x|^2}u(1)\|_{L^2(\mathbb{R}^n)} + C_1(V) \sup_{t\in [0,1]} \|u(t)\|_{L^2(\mathbb{R}^n)}\right),
\end{align*} and if $V$ satisfies \eqref{cond2}, from Corollary \ref{gaussian decay 2},
\begin{align*}
\|e^{\gamma |x|^2}u(t)\|_{L^2(\mathbb{R}^n)} \leq N\left(\|e^{\gamma |x|^2} u_0\|_{L^2(\mathbb{R}^n)} + \|e^{\gamma |x|^2}u(1)\|_{L^2(\mathbb{R}^n)}+\sup_{t\in [0,1]}\|V\|_{L^\infty(\mathbb{R}^n)}\sup_{t\in [0,1]}\|u(t)\|_{L^2(\mathbb{R}^n)}\right),
\end{align*}
and
\begin{align*}
\nonumber &\|\sqrt{t(1-t)} e^{\gamma|x|^2}\nabla u\|_{L^2(\mathbb{R}^n \times [0,1])} +  \|\sqrt{t(1-t)}|x| e^{\gamma|x|^2}u\|_{L^2(\mathbb{R}^n \times [0,1])} \\ &\leq N(\sup_{t\in [0,1]}\|V\|_{L^\infty(\mathbb{R}^n)})^2\left( \|e^{\gamma |x|^2}u_0\|_{L^2(\mathbb{R}^n)}+\|e^{\gamma |x|^2}u(1)\|_{L^2(\mathbb{R}^n)} +\sup_{t\in [0,1]}\|u(t)\|_{L^2(\mathbb{R}^n)} \right).
\end{align*}

\end{proof}

\section{proof of the main result, Theorem \ref{hyperbolic result}}
We first prove the following Carleman estimate in both space and time, for compactly supported functions. The proof follows exactly as for the elliptic case in \cite{escauriaza_hardys_2008}.
\begin{lemma}\label{Carleman estimate in n+1}
For $\phi(x,t)=\mu|x+Rt(1-t)\tilde{\xi}|^2-(1+\epsilon)R^2t(1-t)/16\mu$, $\mu>0,$ $\epsilon>0,$ $R>0,$ for some $\xi \in \mathbb{R}^n$, $|\xi|=1$ and $g\in C^\infty_0(\mathbb{R}^{n+1}),$
\begin{equation}
    \label{Carleman}
    R\sqrt{\frac{\epsilon}{8\mu}}\|e^{\phi(x,t)}g\|_{L^2(\mathbb{R}^{n+1})} \leq \|e^{\phi(x,t)}(\partial_t - iL)g\|_{L^2(\mathbb{R}^{n+1})}.
\end{equation}
\end{lemma}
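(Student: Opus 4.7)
The plan is the classical conjugation-plus-commutator Carleman method. First I would conjugate: set $f := e^{\phi} g$ (still Schwartz since $g \in C^\infty_0(\mathbb{R}^{n+1})$) and compute
\[ P_\phi := e^{\phi}(\partial_t - iL)e^{-\phi} = \partial_t - iL - \phi_t + 2i\nabla_H\phi\cdot\nabla + iL\phi - iQ, \]
where $Q := |\nabla_+\phi|^2 - |\nabla_-\phi|^2 = \nabla\phi\cdot\nabla_H\phi$ arises from expanding $e^{\phi} L(e^{-\phi}f) = Lf - 2\nabla_H\phi\cdot\nabla f - (L\phi) f + Q f$. Next I would split $P_\phi = \mathcal{S} + \mathcal{A}$ into self-adjoint and skew-adjoint parts on $L^2(\mathbb{R}^{n+1})$. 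Careful adjoint bookkeeping shows the skew part of $2i\nabla_H\phi\cdot\nabla$ is precisely $-iL\phi$, which cancels the multiplication by $iL\phi$; what remains is
\[ \mathcal{S} = -\phi_t + 2i\nabla_H\phi\cdot\nabla + iL\phi, \qquad \mathcal{A} = \partial_t - iL - iQ. \]
The standard identity $\|P_\phi f\|^2 = \|\mathcal{S}f\|^2 + \|\mathcal{A}f\|^2 + \langle [\mathcal{S},\mathcal{A}]f, f\rangle$ then reduces \eqref{Carleman} to the coercivity bound $\langle [\mathcal{S},\mathcal{A}] f, f\rangle \ge (R^2\epsilon/8\mu) \, \|f\|_{L^2(\mathbb{R}^{n+1})}^2$.

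The bulk of the work is computing $[\mathcal{S},\mathcal{A}]$ for the specific weight. Writing $y := x + Rt(1-t)\tilde\xi$, the quadratic structure $\phi = \mu |y|^2 - (1+\epsilon)R^2 t(1-t)/(16\mu)$ produces drastic simplifications I would exploit: mixed second derivatives vanish ($\partial_j\partial_k \phi = 2\mu\delta_{jk}$), $L\phi = 2\mu(2k-n)$ is constant (killing $L^2\phi$, $\nabla_H L\phi$, and $\partial_t L\phi$), $\phi_t$ is linear in $x$ (so $L\phi_t = 0$), and the identity $J\tilde\xi = \xi$ gives $\nabla_H\phi_t = 2\mu R(1-2t)\xi$. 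Finally $\nabla Q = 8\mu^2 \tilde y$, hence $\nabla_H\phi\cdot\nabla Q = 16\mu^3 |y|^2$. Collecting contributions from each entry of the commutator I expect
\[ [\mathcal{S},\mathcal{A}] = \phi_{tt} + 8\mu \Delta + 32\mu^3 |y|^2 - 8i\mu R(1-2t)\,\xi\cdot\nabla, \]
with $\phi_{tt} = 2\mu R^2(1-2t)^2 - 4\mu R\,\tilde\xi\cdot y + (1+\epsilon)R^2/(8\mu)$.

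To finish I would complete the square using the magnetic identity $-(\nabla + ia)^2 = -\Delta - 2ia\cdot\nabla + |a|^2$ with $a := R(1-2t)\xi/2$, which converts $-8\mu\Delta - 8i\mu R(1-2t)\xi\cdot\nabla$ into $-8\mu (\nabla + ia)^2 - 2\mu R^2(1-2t)^2$. The $L^2$ pairing of the first piece with $f$ contributes the non-negative $8\mu\|(\nabla + ia)f\|^2$, and the scalar $-2\mu R^2(1-2t)^2$ exactly cancels the matching term in $\phi_{tt}$. The remaining pointwise scalar $32\mu^3 |y|^2 - 4\mu R\,\tilde\xi\cdot y$ is bounded below by $-R^2/(8\mu)$ by minimising $s \mapsto 32\mu^3 s^2 - 4\mu R s$ at $s = R/(16\mu^2)$ together with $|\tilde\xi\cdot y| \le |y|$. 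Thus the $(1+\epsilon)R^2/(8\mu)$ contribution in $\phi_{tt}$ absorbs the $-R^2/(8\mu)$ deficit and leaves the required $\epsilon R^2/(8\mu)$.

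The main obstacle is the algebraic bookkeeping in the commutator, where one must track $\nabla$ versus $\nabla_H = J\nabla$ throughout, and in particular identify why the translation in the weight must use $\tilde\xi$ rather than $\xi$. With $\tilde\xi$ the first-order drift $-8i\mu R(1-2t)\xi\cdot\nabla$ produced by $[2i\nabla_H\phi\cdot\nabla,-iL]$ and $[-\phi_t,-iL]$ pairs against the standard $-8\mu\Delta$ (rather than $-8\mu L$) to form a perfect square; replacing $\tilde\xi$ by $\xi$ would send the drift into the $\tilde\xi$ direction and the completion-of-square step would fail. This mirrors the $\tilde x$ appearing in the convolution kernel of $e^{itL}$ highlighted in the introduction. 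Once this choice is made, the argument runs in parallel with the elliptic version in \cite{escauriaza_hardys_2008}.
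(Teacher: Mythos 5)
Your proposal follows the same conjugation, symmetric/skew-symmetric decomposition, and commutator coercivity route as the paper: the paper keeps the time derivative separate and works with $\partial_t\mathcal{S} + [\mathcal{S},\mathcal{A}]$, which is algebraically identical to your spacetime commutator $[\mathcal{S},\mathcal{A}]$, and its final bookkeeping (Cauchy--Schwarz and Young on the first-order drift, minimising $s\mapsto 32\mu^3 s^2 - 4\mu R s$ to extract the $-R^2/8\mu$ deficit absorbed by $(1+\epsilon)R^2/8\mu$) is precisely your completion-of-square argument. Two small typos that do not affect the argument: your displayed commutator should read $-8\mu\Delta$ (as you correctly use in the completion-of-square step), and the drift term is produced by $[2i\nabla_H\phi\cdot\nabla,\partial_t]$ together with $[-\phi_t,-iL]$, while $[2i\nabla_H\phi\cdot\nabla,-iL]$ is what produces $-8\mu\Delta$.
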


\begin{proof}
 Let $f=e^{\phi}g.$ Then
\begin{align*}e^{\phi}(\partial_t -iL)e^{-\phi}f  &=-\partial_t \phi f + \partial_t f -i(\nabla \phi \cdot \nabla_H \phi - 2i \nabla \phi \cdot \nabla_H - L\phi + L)f \\ & = \partial_t f-\mathcal{S}f-\mathcal{A}f, \end{align*}where $\mathcal{S}$ is the symmetric operator
$$\mathcal{S} = \partial_t \phi - i(2\nabla \phi \cdot \nabla_H + L\phi)$$
and $\mathcal{A}$ is the skew symmetric operator
$$\mathcal{A} = i(\nabla \phi \cdot \nabla_H \phi + L). $$ From \eqref{commutator}  with $A=0$ and $B=1$

\begin{equation}
    \partial_t \mathcal{S} + [\mathcal{S,A}] = \partial_t^2\phi -4i \nabla(\partial_t \phi) \cdot \nabla_H - 2iL(\partial_t \phi) + 4\nabla \phi \cdot D^2_H\phi \nabla \phi - 4\nabla \cdot D^2_H \phi \nabla  -L^2 \phi
\end{equation}
and for $\phi(x,t)=\mu|x+Rt(1-t)\tilde{\xi}|^2-(1+\epsilon)R^2t(1-t)/16\mu$,
\begin{align*}
    \partial_t \mathcal{S} + [\mathcal{S,A}]  = & 2\mu R^2(1-2t)^2|\xi|^2 -4\mu R \tilde{\xi}(x+Rt(1-t)\tilde{\xi}) + \frac{(1+\epsilon)R^2}{8\mu} + 32\mu^3|x+Rt(1-t)\tilde{\xi}|^2 \\  &- 8\mu \Delta - 8i\mu R(1-2t) \xi \cdot \nabla. 
\end{align*}
Thus, 
\begin{align*}
    \langle \partial_t \mathcal{S}f + [\mathcal{S,A}]f,  \textit{}f\rangle_{L^2(\mathbb{R}^n)} =& \int_{\mathbb{R}^n}2\mu R^2(1-2t)^2 |f|^2 dx -\int_{\mathbb{R}^n}4\mu R \tilde{\xi}(x+Rt(1-t)\tilde{\xi})|f|^2  dx   \\ &+\frac{(1+\epsilon)R^2}{8\mu}\int_{\mathbb{R}^n}|f|^2 dx  + 32 \mu^3\int_{\mathbb{R}^n}|x+Rt(1-t)\tilde{\xi}|^2 |f|^2 + 8\mu\int_{\mathbb{R}^n} |\nabla f|^2 dx \\ & -8i\mu R\int_{\mathbb{R}^n}(1-2t)\xi \cdot \nabla f \bar{f}dx \\ & = (1)+(2)+(3)+(4)+(5)+(6). 
\end{align*}
By Cauchy Schwartz and Young's inequalities, and using that $|\xi|=1,$ 
we can show that 

\begin{align*}
   (4) - |(2)| \geq -\frac{R^2}{8\mu} \|f\|^2_{L^2(\mathbb{R}^n)},
\end{align*}
and that

\begin{align*}
|(6)| \leq 2\mu R^2 \|(1-2t)f\|_{L^2(\mathbb{R}^n)}^2 +8\mu  \|\nabla f\|_{L^2(\mathbb{R}^n)}^2,
\end{align*}
so that 
\begin{align*}
 \langle \partial_t \mathcal{S}f + [\mathcal{S,A}]f,  \textit{}f\rangle_{L^2(\mathbb{R}^n)}\geq \frac{\epsilon R^2}{8\mu}\|f\|_{L^2(\mathbb{R}^n)}^2.
\end{align*}

Finally, since
\begin{equation}
   \|\partial_t f - \mathcal{S}f - \mathcal{A}f\|_{L^2(\mathbb{R}^{n+1})}^2   = \int_{\mathbb{R}} \langle \partial_t f - \mathcal{S}f -\mathcal{A}f,  \partial_t f - \mathcal{S}f -\mathcal{A}f \rangle_{L^2(\mathbb{R}^n)} \geq \int_{\mathbb{R}} \langle \partial_t \mathcal{S} + [\mathcal{S,A}]f,f\rangle_{L^2(\mathbb{R}^n)} ,
\end{equation}
it follows that 
\begin{align*}
\frac{\epsilon R^2}{8\mu}\|e^{\phi} g\|_{L^2(\mathbb{R}^{n+1})}^2 \leq \|e^{\phi}(\partial_t - iL)g\|_{L^2(\mathbb{R}^{n+1})}^2,
\end{align*} which proves the lemma.

\end{proof}

Now we can finally prove the main result, Theorem \ref{hyperbolic result}. By Theorem \ref{theorem 3} we are now able to justify all computations, and the proof follows as in \cite{escauriaza_hardys_2008}.
\begin{proof}[Proof of Theorem \ref{hyperbolic result}]

Let $u$ be described as in the hypothesis in Theorem \ref{hyperbolic result}. Let $\tilde{u}, \tilde{V}$ be defined through the Appell transformation in Lemma \ref{conformalappelltransformation}. Then
$$\partial_t \tilde{u}=i(L \tilde{u} + \tilde{V}(x,t)\tilde{u}) \ \ \ \text{ in } \ \mathbb{R}^n\times [0,1],$$ and for $\gamma=\frac{1}{\alpha \beta},$ $\gamma>\frac{1}{2},$  $\|e^{\gamma |x|^2}\tilde{u}(0)\|_{L^2(\mathbb{R}^n)}, \|e^{\gamma |x|^2}\tilde{u}(1)\|_{L^2(\mathbb{R}^n)}$ are both finite.
Let $R>0,$ and let $\mu$ and $\epsilon>0$ small enough satisfy 
\begin{equation}\label{fix epsilon and mu}
    \frac{(1+\epsilon)^{3/2}}{2(1-\epsilon)^3} < \mu \leq \frac{\gamma}{1+\epsilon},
\end{equation}
and 
\begin{equation}
    \frac{1-\epsilon}{2} \geq \frac {1}{R}.
\end{equation}
\begin{remark} $\frac{(1+\epsilon)^{5/2}}{(1-\epsilon)^{3}}$ will be close to $1$ if $\epsilon$ is small enough, and since $\gamma>\frac{1}{2}$, there exists $\mu$ such that (\ref{fix epsilon and mu}) is satisfied.
\end{remark}

To be able to use the Carleman estimate in the previous lemma, we need to define a function $g$ with compact support in both space and time.
Let $\theta\in C^\infty_0(\mathbb{R}^n)$ be such that 
\begin{equation*}
    \theta(x) = \begin{cases}
        1, & |x|\leq 1,\\
        0, & |x|>2,
    \end{cases}
\end{equation*}
and for $M\geq R,$ $\theta_M(x) = \theta(\frac{x}{M}),$
Then we define $\eta_R\in C^\infty_0 ([0,1])$
\begin{equation}
   \eta_R(t)= \begin{cases}
        1 & t \in [\frac{1}{R}, 1-\frac{1}{R}],\\
        0 & t\in [0,\frac{1}{2R}] \cup [1-\frac{1}{2R},1],
    \end{cases}
\end{equation}
and it follows
\begin{align*}
\|\nabla \theta_M(x)\|_{L^\infty(\mathbb{R}^n)} & \leq \frac{N}{M}, \quad 
\|L \theta_M(x)\|_{L^\infty(\mathbb{R}^n)} \leq \frac{N}{M^2}, \quad  
\|\eta_R'(t)\|_{L^\infty([0,1])}  \leq N R
\end{align*} for some constant $N>0.$ We define $g(x,t)=\tilde{u}(x,t)\theta_M(x)\eta_R(t).$ A direct computation shows that 
\begin{equation}\label{gequation}
    \partial_t g - i(L g + \tilde{V}g) =  \tilde{u}\theta_M \eta_R' -i\eta_R(\tilde{u}L\theta_M + 2\nabla \theta_M \cdot \nabla_H \tilde{u}).
\end{equation}
Observe that for the first term on the right-hand side of (\ref{gequation})
$$\text{supp}(\tilde{u}\theta_M \eta'_R ) \subset \{ (x,t): |x|<2M, \   t\in [\frac{1}{2R},\frac{1}{R}] \cup [1-\frac{1}{R}, 1-\frac{1}{2R}]\},$$ and on this region we have, using Young's inequality and (\ref{fix epsilon and mu}),
\begin{align} \label{support1}
\mu|x+Rt(1-t)\tilde{\xi}|^2 \leq \gamma|x|^2 +\frac{\gamma}{\epsilon}.
\end{align}
For the second term on the right-hand side of (\ref{gequation}) we have
$$\text{supp}((2\nabla \theta_M \cdot \nabla \tilde{u} + \tilde{u}\Delta \theta_M) \eta_R)  \subset \{(x,t): M\leq |x| \leq 2M, \ t\in (\frac{1}{2R}, 1-\frac{1}{2R})\},$$  so that
\begin{align} \label{support2}
    \mu|x+Rt(1-t)\tilde{\xi}|^2 & \leq \gamma |x|^2 + \gamma \frac{R^2}{\epsilon}.
\end{align}
Since $g$ has compact support in $\mathbb{R}^n \times [0,1]$ we apply the Carleman estimate in Lemma \ref{Carleman estimate in n+1}. Let $$\phi(x,t) = \mu|x+Rt(1-t)\tilde{\xi}|^2 - \frac{(1+\epsilon)R^2 t(1-t)}{16 \mu}.$$ Combining the Carleman estimate, the bounds for $\nabla \theta_M, L \theta_M$ and $\eta_R'$ together with (\ref{support1}) and (\ref{support2}) we get
\begin{align}
    R\|e^\phi g\|_{L^2(\mathbb{R}^n \times [0,1])} & \leq N(\epsilon,\mu)(\|\tilde{V}\|_{L^\infty(\mathbb{R}^n\times [0,1])}\|e^{\phi}g\|_{L^2(\mathbb{R}^n \times [0,1])} \nonumber  + R e^{\gamma/\epsilon} \sup_{t\in [0,1]} \|e^{\gamma |x|^2} \tilde{u} \|_{L^2(\mathbb{R}^n)} \nonumber \\ &  \ \  + \frac{1}{M} e^{\gamma R^2/\epsilon} \|e^{\gamma |x|^2} (|\tilde{u}| + |\nabla \tilde{u}|)\|_{L^2(\mathbb{R}^n \times [\frac{1}{2R}, 1-\frac{1}{2R}])}).
\end{align}
For $R\geq 2N(\epsilon, \mu)\|\tilde{V}\|_{L^\infty(\mathbb{R}^n \times [0,1])},$ 
\begin{align} \label{Est. after using lemma7}
    R\|e^\phi g\|_{L^2(\mathbb{R}^n \times [0,1])} & \leq N(\epsilon,\mu,\gamma) \left(R  \sup_{t\in [0,1]} \|e^{\gamma |x|^2} \tilde{u} \|_{L^2(\mathbb{R}^n)}  + \frac{1}{M} e^{\gamma R^2/\epsilon} \|e^{\gamma |x|^2} (|\tilde{u}| + |\nabla \tilde{u}|)\|_{L^2(\mathbb{R}^n \times [\frac{1}{2R}, 1-\frac{1}{2R}])}\right).
\end{align}

By Theorem \ref{theorem 3} we have that in both cases of the potential $V$
\begin{equation*} \|e^{\gamma |x|^2} (|\tilde{u}| + |\nabla \tilde{u}|)\|_{L^2(\mathbb{R}^n \times [\frac{1}{2R}, 1-\frac{1}{2R}])} <\infty,\end{equation*}
so by letting M to $+\infty$, the last term on the right-hand side of (\ref{Est. after using lemma7}) goes to zero, and we obtain that
\begin{equation}
    \|e^\phi g\|_{L^2(\mathbb{R}^n \times [0,1])} \leq N(\epsilon,\gamma,\mu) \sup_{t\in [0,1]} \|e^{\gamma |x|^2} \tilde{u} \|_{L^2(\mathbb{R}^n)}.
\end{equation}
In $B_{\epsilon (1-\epsilon)^2 \frac{R}{4}} \times [\frac{1-\epsilon}{2},\frac{1+\epsilon}{2}]$, we have $|x|<\epsilon (1-\epsilon)^2 \frac{R}{4}<R\leq M$ and $t\in [\frac{1-\epsilon}{2},\frac{1+\epsilon}{2}] \subset [\frac{1}{R}, 1-\frac{1}{R}]$, which implies that in $B_{\epsilon (1-\epsilon)^2 \frac{R}{4}} \times [\frac{1-\epsilon}{2},\frac{1+\epsilon}{2}], g=\tilde{u}.$
Moreover,  \eqref{fix epsilon and mu} yields
\begin{align} \label{estimate on phi in the ball}
    \phi(x,t) & \geq \mu(Rt(1-t) - |x|)^2 -\frac{(1+\epsilon)R^2t(1-t)}{16\mu} \nonumber\\ & \geq  \frac{R^2}{64}(4\mu^2(1-\epsilon)^6 - (1+\epsilon)^3) > 0,
\end{align}
so that
\begin{align*}  
R \|e^{\frac{R^2}{64}(4\mu^2(1-\epsilon)^6 - (1+\epsilon)^3)} g\|_{L^2(B_{\epsilon (1-\epsilon)^2 \frac{R}{4}} \times [\frac{1-\epsilon}{2},\frac{1+\epsilon}{2}])} &\leq N(\epsilon,\gamma, \mu)\sup_{t\in [0,1]} \|e^{\gamma |x|^2} \tilde{u} \|_{L^2(\mathbb{R}^n)},
\end{align*}
or equivalently, since $g=\tilde{u}$ in $B_{\epsilon (1-\epsilon)^2 \frac{R}{4}} \times [\frac{1-\epsilon}{2},\frac{1+\epsilon}{2}]$,
\begin{equation}\label{bound on u in the ball}
e^{N_{\epsilon,\mu} R^2} \|\tilde{u}\|_{L^2(B_{\epsilon (1-\epsilon)^2 \frac{R}{4}} \times [\frac{(1-\epsilon)}{2},\frac{(1+\epsilon)}{2}])}\leq N(\gamma,\epsilon,\mu)\sup_{t\in [0,1]} \|e^{\gamma |x|^2} \tilde{u} \|_{L^2(\mathbb{R}^n)}.
\end{equation}
We also have that
\begin{align}
   \nonumber  \|\tilde{u}(t)\|^2_{L^2(\mathbb{R}^n)} &\leq \int_{B_{\epsilon(1-\epsilon)^2\frac{R}{4}}} |\tilde{u}(t)|^2 dx + e^{-2\gamma \epsilon^2(1-\epsilon)^4 \frac{R^2}{16}}\int_{|x|> \epsilon(1-\epsilon)^2\frac{R}{4}} |e^{\gamma |x|^2}\tilde{u}(t)|^2  dx \\ & \leq \|\tilde{u}(t)\|_{L^2(B_{\epsilon (1-\epsilon)^2\frac{R}{4}})}^2 + e^{-2\gamma \epsilon^2(1-\epsilon)^4 \frac{R^2}{16}}\sup_{t\in [0,1]}\|e^{\gamma |x|^2}\tilde{u}(t)\|_{L^2(\mathbb{R}^n)}^2,
\end{align}
and from Lemma \ref{Energyestimate},
\begin{equation}
    \frac{1}{N_V} \|\tilde{u}(0)\|_{L^2(\mathbb{R}^n)} \leq \|\tilde{u}(t)\|_{L^2(\mathbb{R}^n)} \leq N_V \|\tilde{u}(0)\|_{L^2(\mathbb{R}^n)}, \ \text{for all} \ t\in[0,1], \ N_V=e^{\sup_{t\in [0,1]}\|Im\tilde{V}(t)\|_{L^\infty(\mathbb{R}^n)}}.
\end{equation}
Combining the two inequalities above, 
\begin{align*}
    \frac{1}{N_V^2} \| \tilde{u}(0)\|_{L^2(\mathbb{R}^n)}^2 \leq \|\tilde{u}(t)\|_{L^2(B_{\epsilon(1-\epsilon)^2\frac{R}{4}})}^2 + e^{-2\gamma\epsilon^2 (1-\epsilon)^4 R^2/16} \sup_{t\in[0,1]}\|e^{\gamma |x|^2}u(t)\|_{L^2(\mathbb{R}^n)}^2.
\end{align*}
Then, integrating in time from $\frac{1-\epsilon}{2}$ to $\frac{1+\epsilon}{2}$
\begin{align*}
     \epsilon \frac{1}{N_V^2}  \| \tilde{u}(0)\|_{L^2(\mathbb{R}^n)} ^2  &\leq \|\tilde{u}\|_{L^2(B_{\epsilon (1-\epsilon)^2\frac{R}{4}} \times [(1-\epsilon)/2, (1+\epsilon)/2]} ^2 + \epsilon e^{-2\gamma\epsilon^2 (1-\epsilon)^4 R^2/16} \sup_{t\in[0,1]}\|e^{\gamma |x|^2}u(t)\|_{L^2(\mathbb{R}^n)}^2,
\end{align*}
so that, by using (\ref{bound on u in the ball}),
\begin{equation*}
    \|\tilde{u}(0)\|_{L^2(\mathbb{R}^n)} \leq N(\gamma,\epsilon,V)e^{-N_{\epsilon,\gamma,\mu}R^2}\sup_{t\in [0,1]} \|e^{\gamma |x|^2} \tilde{u} \|_{L^2(\mathbb{R}^n)}.
\end{equation*}
Since $\sup_{t\in [0,1]}\|e^{\gamma |x|^2}u(t)\|<\infty,$ we can let $R\to+\infty,$ to deduce that $\tilde{u}=0$. Going back with the Appell transformation concludes the proof of Theorem \ref{hyperbolic result}. 
\end{proof}

\begin{remark} Since $u$ is a $C([0,1],L^2(\mathbb{R}^n))$ solution, we can not guarantee that the function $g=\theta_M\eta_R \tilde{u}$ is regular enough. To justify the computations, we set $\tilde{u}_\rho= \tilde{u}*h_\rho$,where $h$ is a radial mollifier, and $g_\rho=\theta_M\eta_R \tilde{u}_\rho \in C^\infty_0(\mathbb{R}^n \times [0,1])$, we can carry out the proof for $g_\rho$, and then let $\rho\to 0.$
\end{remark}

As a direct consequence of this result we deduce the result for solutions of \eqref{HNLS}, Theorem \ref{HNLS result}:
\textit{Let $u_1$ and $u_2$ be two $C([0,1],H^k(\mathbb{R}^n))$ solutions to \eqref{HNLS}. If $k>n/2$, $F:\mathbb{C}\times \mathbb{C}\to \mathbb{C},$ $F\in C^k$, $F(0)=\partial_u F(0) = \partial_{\bar{u}}F(0) = 0$, and 
    $$\|e^{|x|^2/\alpha^2}(u_1(0)-u_2(0))\|_{L^2(\mathbb{R}^n)} + \|e^{|x|^2/\beta^2}(u_1(1)-u_2(1))\|_{L^2(\mathbb{R}^n)} <\infty$$
    for $\alpha\beta<2,$ then $u_1\equiv u_2$.}
\begin{proof}[Proof of Theorem \ref{HNLS result}]
The proof follows by writing $V=\frac{F(u_1,\bar{u}_1)-F(u_2,\bar{u}_2)}{u_1-u_2} $. Then, applying the Sobolev embedding theorem, since $k>n/2$, and the dominated convergence theorem, we have that $$\lim_{R\to \infty}\|V\|_{L^1[0,1],L^\infty(\mathbb{R}^n\setminus B_R)}=0,$$
and the result follows by applying Theorem \ref{hyperbolic result}. 
\end{proof}

\section*{Acknowledgments}
This work is a part of my Ph.D. thesis at University of Bergen, under the supervision of Didier Pilod. The research was partially supported by the Trond Mohn Foundation (TMF).

\bibliographystyle{acm}      
\bibliography{UC for HS TJ.bib}

\end{document}